\documentclass[11pt]{amsart}
\usepackage{graphicx} 
\usepackage[margin=1in]{geometry}
\PassOptionsToPackage{final}{showkeys}
\usepackage{gsty}
\usepackage{thm-restate}

\definecolor{facetdark}{RGB}{120,165,210}
\definecolor{facetlight}{RGB}{190,215,240}

\usepackage{tikz}
\usetikzlibrary{decorations.pathreplacing,calligraphy}
\usetikzlibrary{calc}

\title[Nondegenerate gradient Young measures and dimension reduction]{Nondegenerate gradient Young measures and dimension reduction for nonlinear membranes}

\author{Gokul G. Nair}
\address[G. G. Nair]{Department of Mathematics, University of Michigan, Ann Arbor MI 48109, United States}
\email{ngokul@umich.edu}

\author{David Padilla-Garza}
\address[D. Padilla-Garza]{Institute of Science \& Technology Austria, Am Campus 1, 3400 Klosterneuburg}
\email{David.Padilla-Garza@ist.ac.at}

\author{Marco Picchi Scardaoni}
\address[M. Picchi Scardaoni]{Department of Civil and Industrial Engineering, University of Pisa,
Largo Lucio Lazzarino 1, 56122, Pisa, Italy}
\email{marco.picchiscardaoni@ing.unipi.it}
	
\date{}
\begin{document}
\sloppy

\begin{abstract}
    We consider the rigorous derivation of membrane theories from three-dimensional nonlinear elasticity. A salient feature of our framework is the inclusion of an orientation-preservation constraint together with energy growth that penalizes vanishing local volume. We embed the 3D variational problem into a space of parametrized measures, and obtain a membrane $\Gamma$-limit defined on a class of nondegenerate gradient Young measures. This formulation offers a twofold advantage: first, it uniquely identifies a non-relaxed membrane energy density capable of capturing the fine oscillations (microstructure) of minimizing sequences; second, the resulting membrane density preserves the unbounded energy growth near degenerate configurations. We further show that the classical relaxed membrane energy of Le Dret and Raoult is recovered as the minimum of our functional over all measures with a prescribed barycenter. The main difficulty lies in generating such measures by maps whose gradients have rank two almost everywhere, which we address through piecewise isometric (origami-like) constructions.   
\end{abstract}
\subjclass{74K15; 49J45; 74B20; 74G65}
\keywords{Dimension reduction; nonlinear membrane energy; gradient Young measures; $\Gamma$-convergence; piecewise affine isometries}
\sloppy
\maketitle

\section{Introduction}
The variational derivation of lower-dimensional models from bulk nonlinear elasticity has been extensively studied since the early 1990s, beginning with Acerbi, Buttazzo and Percivale for strings~\cite{acerbi_variational_1991} and Le Dret and Raoult for membranes~\cite{le_dret_nonlinear_1995}. A standard local constraint in bulk nonlinear elasticity, related to non-interpenetrability and nondegeneracy of the volume element, is orientation preservation, expressed by $\det\nabla\bm f>0$ a.e. This is enforced by requiring the stored energy density $W(\bm F)$ to satisfy $W(\bm F)=+\infty$ for $\det\bm F\leq0$ and $W(\bm F)\rightarrow+\infty$ as $\det\bm F\rightarrow0^+$~\cite{ball_convexity_1976}. While this condition was taken into account in~\cite{acerbi_variational_1991}, it was not in~\cite{le_dret_nonlinear_1995} whose hypotheses were incompatible with $W$ taking on the value $+\infty$. This restriction was later removed in Ben Belgacem~\cite{belgacem1997methode,belgacem2000relaxation}, and Anza-Hafsa and Mandallena~\cite{anza_nonlinear_2008} who derived the membrane limit under the $\det\nabla\bm f>0$ constraint. A related dimension reduction result was obtained by Conti and Dolzmann~\cite{conti_derivation_2006} who derived the same limit under the incompressibility constraint $\det\nabla\bm f=1$. In these cases, a key difficulty is the construction of recovery sequences satisfying the pointwise constraint, which requires relying on some approximation theorems for Sobolev maps by immersions. 

Although~\cite{belgacem2000relaxation,anza_nonlinear_2008} impose the constraint $\det\nabla\bm f>0$ at the level of the bulk elasticity model, the membrane energy they identify has the same form as the one obtained by Le Dret and Raoult: in both cases the limiting energy density is $QW_0$, where $W_0:\RR^{3\times2}\rightarrow[0,+\infty]$ is given by $W_0(\bar{\bm F}):=\inf_{\bm z\in\RR^3}W(\bar{\bm F}|\bm z)$ and $Q$ denotes the quasiconvex envelope. This is a relaxed membrane energy, also known as a tension-field model~\cite{pipkin_relaxed_1986}, whose density vanishes on compressive states, representing fine-scale wrinkling. An important feature of this relaxed model is that the effects of the unbounded growth of $W(\bm F)$ as $\det\bm F\rightarrow0^+$ are completely absent in the limiting density, $QW_0$. This singularity is inherited by $W_0$, which is infinite precisely on $3\times2$ matrices with rank less than two, but is destroyed by quasiconvexification. By contrast, ``direct'' shell and membrane models retain this constraint by requiring the energy density to blow up as the local measure of area or volume approaches zero~\cite{ciarlet_orientation-preserving_2013,anicic_polyconvexity_2018,healey_energy_2023,healey2025nonlinearly}.

The disappearance of the constraint is a consequence of the topology in which the limit is computed: weak Sobolev convergence only retains barycentric information about the gradients of minimizing sequences. The limit therefore identifies $QW_0$ and not $W_0$, which is a genuine loss of information as there are infinitely many functions that share the same quasiconvex envelope~\cite{freddi_variational_2008}. To overcome this problem, Freddi and Paroni~\cite{freddi_energy_2004} embedded the three-dimensional energy into a space of parametrized measures and computed the limit with respect to the weak$*$ convergence of measures using the $\Gamma$-convergence machinery developed in~\cite{anzellotti_dimension_1994}. They identified a limiting energy on the space of gradient Young measures that uniquely determines the membrane energy density $W_0$. In that work, however, Freddi and Paroni adopted hypotheses of the same type as Le Dret and Raoult, and did not incorporate the requirement that energy diverge as local volume vanishes. That requirement is incorporated in their companion paper~\cite{freddi_3d_2004}, where the limiting functional for a one-dimensional string is defined on a class of Young measures on $\RR^3$ for which the function $\bm z\mapsto\abs{\bm z}^p+\abs{\bm z}^{-q}$ is integrable --- a condition that prevents the limiting measure from charging degenerate configurations.

In this work, we treat dimension reduction with the orientation-preservation constraint in the Young measure setting. Starting from a bulk energy density $W(\bm F)\rightarrow+\infty$ as $\det\bm F\rightarrow0^+$, we identify the limiting membrane energy density $W_0:\RR^{3\times2}\rightarrow[0,+\infty]$ uniquely, rather than only its quasiconvex envelope. Additionally, we show that $W_0$ inherits the property that $W_0(\bar{\bm F})\rightarrow+\infty$ as $J(\bar{\bm F}):=\sqrt{\det\bar{\bm F}^T\bar{\bm F}}\rightarrow 0$ at a rate determined by the growth conditions of $W$. The limiting problem is posed on a class of gradient Young measures on $\RR^{3\times2}$, which we call \textit{nondegenerate}, for which $\Phi(\bar{\bm F}):=\abs{\bar{\bm F}}^p+\abs{J(\bar{\bm F})}^{-q}$ is integrable (denoted by $\mathcal{JY}^{p,-q}$). As in~\cite{freddi_3d_2004}, this condition prevents the limiting Young measures from charging degenerate gradients. 

The main difficulty in proving this $\Gamma$-convergence result lies in constructing recovery sequences, a task contingent on the answer to the following question: if a gradient Young measure is supported on matrices of rank two, can it be generated by a sequence of gradients that have rank two almost everywhere? Theorem~\ref{thm:gym} answers this question in the affirmative. More precisely, we show that every $\nu\in\mathcal{JY}^{p,-q}$ is generated by a sequence of gradients $\{\nabla\bm u^k\}$ that themselves satisfy the rank constraint and for which $\{\Phi(\nabla\bm u^k)\}$ is equiintegrable.

An important ingredient in the proof of Theorem~\ref{thm:gym} is Lemma~\ref{lemma:replacement-inside-triangle}, which may be of independent interest. It allows us to replace a strictly short affine map $\bm u$ on a triangle $T$ with a piecewise affine map $\tilde{\bm u}$ that agrees with $\bm u$ on $\partial T$ and satisfies quantitative bounds on its Lipschitz constant and $J(\nabla\tilde{\bm u})$. The replacement is obtained by an origami-type (piecewise isometric) construction on a slightly shrunken triangle $T^\delta\subset T$, where $J(\nabla\tilde{\bm u})$ is bounded below by an absolute constant, together with an interpolation back to $\bm u$ on the remaining collar region $T\setminus T^{\delta}$, whose measure is $O(\delta)$. The proof of Theorem~\ref{thm:gym} then proceeds by a sequence of successive modifications -- mollification, approximation by immersions, finite element interpolation, and finally the origami replacement -- each step arranged so as to preserve Young measure generation and equiintegrability.

Origami-based constructions are not new. They appear in the study of crumpling, where Conti and Maggi~\cite{conti2008confining} prove that origami maps are dense in the class of short maps with respect to the uniform norm. Their proof takes the $C^1$-isometric embedding theorems of Nash and Kuiper~\cite{nash1954c,kuiper1955c1,kuiper1955c2} as its starting point. What we require is different: rather than approximating $\bm u$, the replacement $\tilde{\bm u}$ exactly matches it on $\partial T$ and satisfies definite lower bounds on $J(\nabla\tilde{\bm u})$. This is why our approach uses the extension theorem of Brehm~\cite{brehm1981extensions}.

Closer to our setting is the literature on gradient Young measures under determinant constraints. Characterizations have been obtained for gradients with positive determinant in the subcritical range $p<d$~\cite{koumatos2016orientation} and in the plane for quasiregular~\cite{astala2002quasiregular} maps, quasiconformal maps~\cite{benesova2015gradient} and bi-Lipschitz homeomorphisms~\cite{benevsova2016characterization}. It is worth pointing out that Bene\v{s}ov\'a and Kru\v{z}\'{\i}k~\cite{benevsova2016characterization} are also motivated by similar growth hypotheses on $W$ that we assume here. Their result rests on a planar bi-Lipschitz extension theorem~\cite{daneri2011planar} and is confined to the case of $p=\infty$. Our setting differs in that the constraint is local (rather than global injectivity), and the matrices are non-square: the extra codimension gives us the flexibility for a folding-based construction. On the other hand, our result does not apply to the $p=\infty$ case.

Apart from membrane theories, it is also possible to consider other effective (dimensionally reduced) theories for incompressible thin elastic sheets, which correspond to  higher-order energy scaling regimes. Pure-bending theories have also been rigorously derived by $\Gamma$-convergence, building on geometric-rigidity methods; see, for instance, \cite{conti_gamma_2009} for a Kirchhoff plate theory, and \cite{li_kirchhoff_2013} for a variant of the Kirchhoff theory for prestrained shells, including the incompressible case. In \cite{li_chermisi_von_karman_2013}, the authors derive a von K\'arm\'an theory for incompressible elastic shells. Relatedly, Lewicka and Li \cite{lewicka_li_convergence_2015} studied the convergence of equilibria for incompressible elastic shells in the von K\'arm\'an regime.

With Theorem~\ref{thm:gym} in hand, we turn to the dimension-reduction problem. Proposition~\ref{thm:Young-measure-compactness} establishes compactness: a sequence of three-dimensional deformations with bounded energy generates, in a suitable sense, a nondegenerate gradient Young measure $\nu\in\mathcal{JY}^{p,-q}$ whose barycenter is the gradient of the weak Sobolev limit of the sequence. Theorem~\ref{thm:gc} establishes the two halves of the $\Gamma$-convergence result. The lower bound follows from compactness together with the growth properties of $W_0$, while the upper bound relies on the generation result, Theorem~\ref{thm:gym}. Lastly, we prove two additional results: the first, a relaxation result (Proposition~\ref{thm:relaxation}) showing that our limiting functional is the lower semicontinuous envelope of the corresponding energy on Sobolev deformations; the second, a result (Proposition~\ref{prop:relation-to-LDR}) that relates our functional to the one identified by Le Dret and Raoult.

The paper is organized as follows. In Section~\ref{sec:main-results} we introduce the main objects of study and state our principal results. Section~\ref{sec:notation} collects some definitions and standard facts about Young measures needed in the sequel. Section~\ref{sec:young-measure-generation} proves the generation results for nondegenerate gradient Young measures using the origami construction. Section~\ref{sec:problem-formulation-Young} establishes the growth bounds, compactness, and $\Gamma$-convergence for the dimension-reduction problem. Finally, Section~\ref{sec:relaxation} establishes the relaxation result and compares it with the membrane theory of Le Dret and Raoult~\cite{le_dret_nonlinear_1995}.

\section{Main results}\label{sec:main-results}

We now state the main results of this paper. 

\subsection{Nondegenerate Young measures}

We begin with results relating to nondegenerate gradient Young measures. The main result roughly states that a gradient Young measure concentrated on matrices of rank two can be generated by smooth immersions from 2D into 3D. Before stating our results, we recall the definition and basic notions of Young measures. 

Let $\Omega\subset\RR^n$ be an open bounded domain and let $V$ be a finite dimensional vector space (in our applications $V=\RR^{m\times n}$ usually). We denote by $\mathcal{M}(V)$ the space of (finite) positive Radon measures on $V$. From Riesz's representation theorem, we have the well-known duality pairing of $\mu\in\mathcal{M}(V)$ with $\phi\in C_c(V)$:
\begin{align*}
    \innerpdt{\mu,\phi}:=\int_V\phi\dif\mu.
\end{align*}
A \textit{parametrized measure} is a map $\mu:\Omega\rightarrow\mathcal{M}(V)$. For $x\in\Omega$, we often use the notation $\mu_x$ to mean $\mu(x)$.
\begin{definition}
    A parametrized measure $\mu:\Omega\rightarrow\mathcal{M}(V)$ is said to be \textit{weakly* measurable} if for every $\phi\in C_c(V)$ the map $x\mapsto\innerpdt{\mu(x),\phi}$ is Lebesgue measurable.
\end{definition}
The space of parametrized measures for which the map $x\mapsto\innerpdt{\mu(x),\phi}$ is essentially bounded for all $\phi\in C_c(V)$ is denoted by $L^\infty_w(\Omega,\mathcal{M}(V))$. We equip $L^\infty_w(\Omega,\mathcal{M}(V))$ with the weak* topology induced by duality with the space $L^1(\Omega,C_c(V))$. In particular, we say that a sequence $\{\mu^k\}\subset L^\infty_w(\Omega,\mathcal{M}(V))$ converges weakly* to $\mu\in L^\infty_w(\Omega,\mathcal{M}(V))$ if for all $\phi\in C_c(V)$ and all $g\in L^1(\Omega)$
\begin{align*}
    \lim_{k\rightarrow\infty}\int_\Omega\innerpdt{\mu^k_x,\phi} g(x)\dif x=\int_\Omega\innerpdt{\mu_x,\phi} g(x)\dif x.
\end{align*}

\begin{definition}[Young measure]
    A parametrized measure $\mu\in L^\infty_w(\Omega,\mathcal{M}(V))$ is called a \textit{Young measure} if $\mu_x$ is a probability measure for almost every $x\in\Omega$. The space of all such Young measures is denoted $\mathcal{Y}(\Omega,V)$.
\end{definition}

\begin{definition}
    A Young measure $\mu\in\mathcal{Y}(\Omega,V)$ is said to be \textit{generated} by the sequence of measurable functions $\{u^n\}$ if $\delta_{u^n(\cdot)}\xrightharpoonup{*}\mu$ in $L^\infty_w(\Omega,\mathcal{M}(V))$.
\end{definition}
Every Young measure can be generated by a sequence of measurable functions~\cite[Theorem 7.7]{pedregal_parametrized_1997}.

\begin{definition}
    The \textit{barycenter} or center of mass of a Young measure $\mu\in\mathcal{Y}(\Omega,V)$ is the function $x\mapsto\innerpdt{\mu_x,\text{id}}$, where $\text{id}$ is the identity mapping. It will be denoted by $[\mu_{x}]$.
\end{definition} 

\subsection*{Notation}
The definitions above are stated for a general finite-dimensional space $V$ and use lightface type. From now on we work with deformations and adopt a convention common in nonlinear elasticity: lightface type is used for objects associated with the reference configuration, and boldface type for objects taking values in, or mapping into, the deformed configuration. Thus points $x\in\Omega$ and vectors $e_i\in\RR^n$ are written in lightface, whereas maps $\bm u:\Omega\to\RR^m$ (in our applications $n=2$ and $m=3$) and matrices representing linear maps into $\RR^m$, such as gradients $\bm A\in\RR^{m\times n}$, are written in boldface.

In this paper, we work mainly with gradient Young measures:
\begin{definition}[Gradient Young measures]
    A Young measure $\mu\in\mathcal{Y}(\Omega,\RR^{m\times n})$ is called a $p$-gradient Young measure if there exists a sequence $\{\bm u^k\}\subset W^{1,p}(\Omega,\RR^m)$ such that $\bm u^k\weakarrow \bm u$ in $W^{1,p}(\Omega,\RR^m)$ and $\{\nabla \bm u^k\}$ generates $\mu$. In this case, we refer to $\bm u$ as the \textit{underlying deformation} of $\mu$. The space of all $p$-gradient Young measures is denoted by $\mathcal{GY}^p(\Omega,\RR^{m\times n})$.
\end{definition}

We now introduce the class of gradient Young measures central to our analysis. From here on, we specialize to the case $m=3$ and $n=2$ of surfaces deforming in space so that $\Omega\subset\RR^2$ and gradients are in $\RR^{3\times 2}$. Motivated by nonlinear elasticity, we impose an integrability condition that controls both large gradients and degeneration of the area Jacobian. 

\begin{definition}[Nondegenerate gradient Young measures]
    Let $1<p<\infty$ and $0<q<\infty$. We denote by $\mathcal{JY}^{p,-q}$ the set of Young measures $\nu\in\mathcal{GY}^p(\Omega,\RR^{3\times 2})$ such that
    \begin{align*}
        \int_\Omega\int_{\RR^{3\times2}}\Phi(\bm A)\dif\nu_x(\bm A)\dif x<+\infty,
    \end{align*}
    where $\Phi(\bm A):=\abs{\bm A}^p+\abs{J(\bm A)}^{-q}$ and $J(\bm A):=\sqrt{\det \bm A^T\bm A}$.
\end{definition}

We now state the main results of this paper, which concern the generation of a measure $\nu \in \mathcal{JY}^{p,-q}$ by immersions (either smooth or piecewise affine). 

\begin{theorem}\label{thm:gym}
    Let $\Omega\subset\RR^2$ be a bounded Lipschitz domain and $\nu\in\mathcal{JY}^{p,-q}(\Omega,\RR^{3\times2})$. Then there exists a sequence of continuous functions $\{\bm u^k\}\subset W^{1,p}(\Omega,\RR^3)$, piecewise affine on a finite triangulation of $\Omega$, generating $\nu$ and satisfying
    \begin{align*}
        \sup_{k}\int_{\Omega}\Phi(\nabla \bm u^k)\dif x<+\infty.
    \end{align*}
    Furthermore, the sequence $\{\Phi(\nabla\bm u^k)\}$ is equiintegrable, and
    \begin{align*}
        \lim_{k\rightarrow\infty}\int_\Omega\Phi(\nabla\bm u^k)\dif x=\int_\Omega\int_{\RR^{3\times2}}\Phi(\bm F)\dif\nu_x(\bm F)\dif x.
    \end{align*}
\end{theorem}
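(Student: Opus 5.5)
We follow the chain of successive modifications sketched in the introduction. Throughout, we use a diagonal argument: it suffices to produce, for every $\varepsilon>0$ and every finite family of test pairs $(\phi_i,g_i)\in C_c(\RR^{3\times2})\times L^1(\Omega)$, a continuous piecewise affine map $\bm u$ with the following two properties. First, $\int_\Omega\Phi(\nabla\bm u)$ is within $\varepsilon$ of $\int_\Omega\langle\nu_x,\Phi\rangle$. Second, $\int_\Omega\phi_i(\nabla\bm u)g_i$ is within $\varepsilon$ of the corresponding Young-measure quantity. Since weak$*$ convergence on bounded sets is metrizable, this yields a sequence generating $\nu$ with $\int\Phi(\nabla\bm u^k)\to\int\int\Phi\,d\nu_x$. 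Equiintegrability then follows from the standard fact that, for a generating sequence, $\limsup\int\Phi(\nabla\bm u^k)\le\int\langle\nu_x,\Phi\rangle$ forces $\{\Phi(\nabla\bm u^k)\}$ to be equiintegrable. Indeed, the lower semicontinuity $\liminf\int_E\Phi(\nabla\bm u^k)\ge\int_E\langle\nu_x,\Phi\rangle$ holds on every measurable $E$ and for every truncation $\min(\Phi,M)$, so no mass can concentrate.

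\textbf{Step 1: a generating sequence with rank two.} We start from any $W^{1,p}$ sequence generating $\nu$ whose $|\nabla\bm v^k|^p$ is equiintegrable; this is available by the Fonseca--M\"uller--Pedregal decomposition lemma. We must also control $J^{-q}$. We first reduce, by a standard localization/averaging argument (Kinderlehrer--Pedregal), to homogeneous measures on small squares with affine boundary data: $\nu$ is approximated by piecewise-homogeneous measures $\bar\nu_{Q}$ on a fine grid, and one generates each separately and glues by the boundary condition. For a homogeneous $\nu\in\mathcal{JY}^{p,-q}$ with barycenter $\bm A$, we mollify a generating sequence and then approximate by immersions. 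Concretely, we perturb by adding $t\,\bm w$, where $\bm w$ is a fixed map whose gradient is "transversal" (e.g.\ $\bm w=(x,y,0)$ composed with a generic rotation), and we use a Sard/transversality-type argument to make the degenerate set $\{J(\nabla\bm v)<\eta\}$ small. The measure of this set must be controlled in terms of $\nu(\{J<2\eta\})$. Because $\nu$ integrates $J^{-q}$, $\nu(\{J<\eta\})\lesssim\eta^{q}$, and more precisely $\int_{\{J<\eta\}}J^{-q}\,d\nu\to0$.

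\textbf{Step 2: finite elements and origami replacement.} Next, we replace $\bm v$ by its piecewise affine interpolant on a fine triangulation. The Young measure and the $|\cdot|^p$ part are preserved in the limit by standard finite element estimates. On the good triangles, $\nabla$ stays close to $\nabla\bm v$, so $J$ is bounded below there and the contribution of $J^{-q}$ converges. On the bad triangles, where $J(\nabla)$ is small, we use Lemma~\ref{lemma:replacement-inside-triangle}. After composing with a slight contraction of the target, the affine map is strictly short, and we replace it by an origami map $\tilde{\bm u}$ that matches it on $\partial T$. This keeps the map continuous and gives Lipschitz constant $\lesssim|\nabla\bm u|+1$. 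The Jacobian satisfies $J(\nabla\tilde{\bm u})\ge c$ on $T^\delta$, and it is controlled by the interpolation on the $O(\delta)|T|$ collar. The bad triangles have total area tending to zero, so the replacement does not change the generated Young measure and contributes $o(1)$ to $\int\Phi$.

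\textbf{Main obstacle.} The hardest step is the quantitative $J^{-q}$ control in the collar of the bad triangles and in Step 1. In the collar, $J$ may be as small as the original one, so we must tune $\delta$ against the smallness of $J$. Our first attempt is to choose $\delta$ per triangle so that $\delta|T|\,J_{\rm collar}^{-q}\le$ the area of the triangle times a small constant. We would use the lemma's lower bound, which we expect to be of the form $J\gtrsim\delta\cdot(\text{shortness})$, and if needed we would shrink the original map slightly (by a factor $1-\theta$) to create definite shortness.
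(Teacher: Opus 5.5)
Your overall architecture (rank-two piecewise affine generators, origami replacement on bad triangles with a per-triangle $\delta$, and equiintegrability from $\limsup\le$ plus lower semicontinuity) is the paper's. However, the step you flag as the main obstacle is left open, and the way you propose to close it does not work.

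Lemma~\ref{lemma:replacement-inside-triangle} gives no collar bound of the form $J\gtrsim\delta\cdot(\text{shortness})$. Even if it did, shrinking $\delta$ would make the collar term $\sim\delta^{1-q}\abs{T}$ worse, not better, as soon as $q>1$, which the theorem allows. What closes the estimate is the lemma's second inequality. In the collar, $\bm w=\bm u+\omega\bm n$ only adds a normal component, so the $\bm n$-component of $\partial_1\bm w\times\partial_2\bm w$ is untouched and $J(\nabla\bm w)\ge J(\nabla\bm u)$ on all of $T$.

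On a bad triangle $T_i$, let $m_i:=J(\nabla\bm v^k|_{T_i})$ and let $\sigma_1$ be the top singular value. Compress by $(1+\sigma_1)^{-1}$, apply the lemma, and undo the compression. This contraction is not slight when $\sigma_1$ is large, and it must be undone to keep the boundary values; shrinking by $1-\theta$ without undoing breaks continuity with the neighbouring triangles. After undoing, $J=(1+\sigma_1)^2\ge1$ on $T_i^{\delta_i}$ and $J\ge m_i$ on the collar, so
\[
\int_{T_i\cap\Omega}J(\nabla\bm w)^{-q}\,dx\le m_i^{-q}(2\delta_i-\delta_i^2)\abs{T_i}+\abs{T_i\cap\Omega}.
\]
Choosing $\delta_i$ depending on $m_i$ makes the first term at most $\abs{T_i\cap\Omega}$. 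No bound uniform in $m_i$ is needed, but $m_i>0$ on \emph{every} triangle is. That is why the preliminary step must produce rank two everywhere, not merely a small degenerate set.

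Your Step~1 is unnecessary and, as written, flawed. Gluing homogeneous generators with affine data $\bar{\bm A}_Qx$ on neighbouring squares does not give a continuous map, since a piecewise constant barycenter is generally not a gradient. The usual repair (adding $\nabla\bm u$, or translating the $\nu_x$) does not preserve integrability of $J^{-q}$, and transversality cannot supply that quantitative control either.

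None is needed at the generation stage. Take any $p$-equiintegrable piecewise affine generating sequence $\{\bm v^k\}$ with $J>0$ on each triangle. The truncation $\Phi_\varepsilon:=\abs{\cdot}^p+(J\vee\varepsilon)^{-q}$ is continuous with $p$-growth, so $\int_\Omega\Phi_\varepsilon(\nabla\bm v^k)\to\int_\Omega\langle\nu_x,\Phi_\varepsilon\rangle\le\int_\Omega\langle\nu_x,\Phi\rangle$. This one fact gives $\abs{\{J(\nabla\bm v^k)\le\varepsilon\}}\le C\varepsilon^q$, which is what you wanted from transversality. It also controls $J^{-q}$ on the good set, where $\Phi=\Phi_\varepsilon$. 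A diagonal choice $\varepsilon_k\to0$ then finishes.

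Your constant-perturbation idea does settle the rank-two requirement if you apply it directly to the piecewise affine maps. For $\bm B$ of rank two, $t\mapsto J(\bm A_T+t\bm B)^2$ is a nonzero quartic with leading coefficient $\abs{\bm Be_1\times\bm Be_2}^2$. Adding $t_k\bm Bx$ with $t_k\to0$, avoiding finitely many values, makes $J>0$ on every triangle while preserving generation and $p$-equiintegrability. This is a more elementary substitute for the Gromov--Eliashberg step in Proposition~\ref{prop:Young-measures-generate-by-PC}.
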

The proof is given in Section~\ref{sec:young-measure-generation} and the strategy is illustrated in Figure~\ref{fig:replacement-construction}. The idea is to first generate the measure by piecewise affine maps and then to modify them on the ``bad triangles'', those on which the area Jacobian falls below a certain threshold. The key point is Lemma~\ref{lemma:replacement-inside-triangle}, an origami construction that modifies a piecewise affine map inside each bad triangle, without changing its boundary values, so that the area Jacobian is raised above the threshold on most of the triangle. This is what keeps the term $\abs{J(\nabla\bm u^k)}^{-q}$ in $\Phi$ integrable, uniformly in $k$, along the approximating sequence.

\begin{figure}[!t]
  \centering
  \resizebox{\linewidth}{!}{%
    \input{fig-piecewise-affine.tikz}%
  }
  \caption{Schematic representation of the replacement strategy in the proof of Theorem~\ref{thm:gym}. The generating sequence is first approximated by piecewise affine maps. On each ``bad triangle'' (marked in blue), where the Jacobian falls below the threshold, the map is then replaced by the origami-based construction of Lemma~\ref{lemma:replacement-inside-triangle}, which raises the Jacobian above the threshold on most of the triangle.}
\label{fig:replacement-construction}
\end{figure}

To conclude, we state the analogue of Theorem~\ref{thm:gym} for smooth, instead of piecewise affine immersions. 

\begin{proposition}\label{prop:smooth-YM-generation}
    Assume the same hypotheses as Theorem~\ref{thm:gym}. Then $\nu$ can be generated by a sequence $\{\bm u^k\}\subset C^\infty(\bar{\Omega},\RR^3)$ satisfying $J(\nabla\bm u^k)>0$ on $\bar{\Omega}$ with $\{ \nabla \bm u^k\}$ $p$-equiintegrable, such that
    \begin{align}\label{eq:Cinfty-Phi-equiintergability}
        \lim_{k\rightarrow\infty}\int_\Omega\Phi(\nabla\bm u^k)\dif x=\int_\Omega\int_{\RR^{3\times 2}}\Phi(\bm F)\dif\nu_x(\bm F)\dif x.
    \end{align}
\end{proposition}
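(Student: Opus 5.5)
We deduce Proposition~\ref{prop:smooth-YM-generation} from Theorem~\ref{thm:gym} by smoothing the piecewise affine generators. The approximations must be fine enough that the Young measure and the $\Phi$-energy are asymptotically unchanged, and they must keep $J>0$ everywhere.

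Let $\{\bm v^k\}$ be the sequence given by Theorem~\ref{thm:gym}. It is continuous and piecewise affine on a finite triangulation $\mathcal T_k$, it generates $\nu$, the family $\{\Phi(\nabla\bm v^k)\}$ is equiintegrable, and $\int_\Omega\Phi(\nabla\bm v^k)\to\int_\Omega\int\Phi\dif\nu_x\dif x$. Since $\Phi(\nabla\bm v^k)$ is integrable and $\nabla\bm v^k$ is constant on each triangle, we have $J(\nabla\bm v^k)>0$ on every triangle. The same equiintegrability gives that $\{\nabla\bm v^k\}$ is $p$-equiintegrable, because $\abs{\bm A}^p\le\Phi(\bm A)$. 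For each fixed $k$ we construct $\bm u^k\in C^\infty(\bar\Omega,\RR^3)$ with $J(\nabla\bm u^k)>0$ on $\bar\Omega$ such that
\begin{align*}
\norm{\bm u^k-\bm v^k}_{W^{1,p}}\le 1/k \quad\text{and}\quad \int_\Omega\abs{\Phi(\nabla\bm u^k)-\Phi(\nabla\bm v^k)}\dif x\le 1/k.
\end{align*}
The conclusion then follows by a diagonal argument. Because $\nabla\bm u^k-\nabla\bm v^k\to0$ in $L^p$, and in particular in measure, the two gradient sequences generate the same Young measure. $p$-equiintegrability is preserved under $L^p$-small perturbations. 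Equation~\eqref{eq:Cinfty-Phi-equiintergability} follows from the $L^1$ closeness of the $\Phi$ values.

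The core step is smoothing a single piecewise affine immersion $\bm v$ while controlling $J$ from below. Our first attempt is a local smoothing near the edges and vertices of $\mathcal T_k$. Away from a $\delta$-neighbourhood of the $1$-skeleton we keep $\bm v$ unchanged. Near the skeleton we need a smooth map whose gradient stays in a compact set of rank-two matrices, so that $\Phi(\nabla\bm u)$ is bounded there by a constant $C_k$ independent of $\delta$. The modified region then contributes only $O(C_k\delta)$ to the error, which can be made as small as we like.

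Plain mollification fails at a fold. For example, across an edge where the two adjacent triangles have gradients $(\bm a|\bm b)$ and $(\bm a|-\bm b)$, convex combinations of the gradients can have rank one. Instead we use the extra codimension, exactly as in the origami philosophy behind Lemma~\ref{lemma:replacement-inside-triangle}. Across an edge with direction $e$, the two adjacent gradients share the same value $\bm a$ on $e$. Their transverse columns $\bm b_\pm$ are both not parallel to $\bm a$. We connect $\bm b_-$ to $\bm b_+$ by a smooth path $\bm b(t)$ in $\RR^3\setminus\RR\bm a$; this set is connected, since it is $\RR^3$ minus a line. We then define $\bm u$ in the strip by integrating this transverse column, keeping the $e$-column equal to $\bm a$. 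The resulting gradient always has rank two, with $J$ bounded below by $\min_t\mathrm{dist}(\bm b(t),\RR\bm a)\abs{\bm a}>0$.

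One technical point is that the transverse integral must match $\bm v$ at both edges of the strip. We handle this by choosing the path with the prescribed mean $\frac12(\bm b_-+\bm b_+)$, which is possible after adding a compactly supported loop in $\RR^3\setminus\RR\bm a$ around the line.

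The main obstacle is the vertices, where several such strips meet and the edge constructions must be glued compatibly. Here we would try the following: modify $\bm v$ near each vertex to be a cone, that is, positively $1$-homogeneous about the vertex. Such a cone is determined by a closed curve in $\RR^3$ whose tangent is never parallel to its position. We smooth that curve while preserving this condition, which is an open condition and can be preserved by a small $C^1$ perturbation. Finally we round off the cone tip inside a tiny disk by replacing it with a small smooth immersed cap. For the cap, we would appeal to the approximation by immersions already used in the proof of Theorem~\ref{thm:gym}, since the regularity requirements there are local. If a direct cap is awkward, we would instead use that the tip region has area $O(\delta^2)$, so that any immersion with bounded $\Phi$ works.

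Finally, smoothness up to $\bar\Omega$ is obtained by extending $\bm v$ affinely across boundary triangles onto a slightly larger triangulated domain before performing the construction, and then restricting to $\bar\Omega$.
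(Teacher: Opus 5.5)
Your reduction matches the paper's: for fixed $k$ you change $\bm v^k$ only on a set of small measure around the $1$-skeleton, with $\Phi(\nabla\bm u^k)$ bounded there, and then diagonalize. Your edge construction is also correct, including the mean condition $\tfrac12(\bm b_-+\bm b_+)$.

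\textbf{The gap is the cap at the vertices.} Whether a $C^1$ immersion of a small disk $B_\rho(P)$ can agree with a given immersion near $\partial B_\rho(P)$ is a topological question. The loop $\theta\mapsto\nabla\bm u(P+r\theta)$ must be null-homotopic in the set of rank-two matrices of $\RR^{3\times2}$. By Gram--Schmidt this set deformation retracts onto the orthonormal $2$-frames $\cong\mathrm{SO}(3)$, and $\pi_1(\mathrm{SO}(3))=\ZZ/2$.

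This condition can fail. Take eight triangles around $P$, each sector of angle $\pi/4$ mapped affinely onto a sector of angle $\pi/2$ in $\RR^2\times\{0\}$; this is a piecewise affine version of $z\mapsto z^2/\abs{z}$.
\begin{itemize}
\item $J>0$ on every triangle.
\item Your recipe allows in-plane transition paths: a symmetric traversal of the segment $[\bm b_-,\bm b_+]$ avoids $\RR\bm a$, since $\bm a\times\bm b_\pm$ both point along $+e_3$, and it has the required mean.
\item The smoothed map near $P$ is then a planar cone winding twice around $\bm v(P)$. It is homotopic, through cones over curves with strictly increasing argument, to $z\mapsto z^2/\abs{z}$.
\item The gradient loop of $z\mapsto z^2/\abs{z}$ is $R(2\theta)\,\mathrm{diag}(1,2)\,R(-\theta)\simeq R(\theta)$, with $R(\phi)$ the planar rotation by $\phi$. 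This is a full turn about $e_3$, the nontrivial element of $\pi_1(\mathrm{SO}(3))$.
\end{itemize}
So no cap exists at any scale. Theorem~\ref{thm:gromov-eliashberg} cannot supply one, since it gives only $W^{1,p}$-approximation with no boundary matching. An ``immersion with bounded $\Phi$'' on the tip region does not exist either.

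Relatedly, rounding the corners of $\bm\gamma$ is not a small $C^1$ perturbation. The tangent has to travel from $\bm b_-$ to $\bm b_+$ inside $\RR^3\setminus\RR\bm a$, which is your edge problem again. The homotopy classes of these paths are exactly what determine the $\ZZ/2$ class at each vertex.

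\textbf{What a repair would need.} You would have to choose the transition paths globally. Adding one winding around $\RR\bm a$ on an edge flips the class at both of its endpoints. You therefore need a parity argument that pushes all defects along chains of edges to vertices outside $\bar\Omega$. You would then still need an actual construction of the cap when the class is trivial, for instance a relative $h$-principle or an explicit model.

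The gluing you worried about is harmless by comparison. Near $P$, take the strip half-width equal to $\eta\,\mathrm{dist}(\cdot,P)$ with $\eta$ small; the strip map is then automatically $1$-homogeneous and still of rank two.

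\textbf{How the paper avoids this.} It mollifies $\bm v^k$ at scale $\eta_k$, so that $\tilde{\bm v}^k=\bm v^k$ off $\Gamma_k^{\eta_k}$, where $\Gamma_k$ is the $1$-skeleton together with $\partial\Omega$. It then invokes Proposition~\ref{prop:conti-dolzmann}. The requirement that $\Gamma$ be connected and contain $\partial\Omega$ is what makes this work: the repair region has a free outer boundary, so no vertex has to be capped with prescribed data. The bounds $\abs{\nabla\bm u^k}\le c(\abs{\nabla\tilde{\bm v}^k}+1)$, $J(\nabla\bm u^k)\ge cJ(\nabla\tilde{\bm v}^k)$, and $J(\nabla\bm u^k)\ge c$ on $\Gamma_k^{\eta_k}$ then give exactly the smallness of the $\Phi$-error that you wanted.
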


\subsection{A Young-measure approach to membrane theory}

We now apply the results of the last subsection (Theorem \ref{thm:gym} and Proposition \ref{prop:smooth-YM-generation}) to thin elastic sheets. More specifically, we are interested in the derivation of an effective membrane model for thin elastic sheets under the orientation-preservation constraint. We take a Young-measure approach to this problem, and derive the limiting functional on the space of gradient Young measures. The advantage of this approach is that we retain information about the oscillations in the relaxed regions of the energy. 

Let $\Omega\subset\RR^2$ be an open, bounded, Lipschitz domain and set $\Omega_\e:=\Omega\times\left(-\frac{\e}{2},\frac{\e}{2}\right)\subset\RR^3$. For $x\in\Omega_\e$, we often use the notation $x=(x',x_3)$, where $x'=(x_1,x_2)\in\Omega$.

We first define the class of stored energy functionals that we will treat in this paper. Apart from the standard hypotheses, we impose orientation preservation and growth conditions that penalize vanishing volume. 
\begin{definition}
We consider a 3D (bulk) stored energy density $W:\Omega\times\RR^{3\times 3}\rightarrow[0,+\infty]$,  a function satisfying the following hypotheses:
\begin{enumerate}[(i)]
    \item Regularity: $W$ is a Carath\'eodory integrand, and for a.e. $x'\in\Omega$, $W(x',\cdot)$ is finite on $\{\bm F:\det\bm F>0\}$. 
    \item Orientation preservation:
    \begin{align}
        \label{itm:local-injectivity}
        \det\bm F\leq0\text{ if and only if }W(\cdot,\bm F)=+\infty.\tag{H1}
    \end{align}
    \item For $p,s\in(1,\infty)$, there exist positive constants $C_1$, $C_2$, $C_3$, $C_4$ such that
    \begin{align}\label{itm:dimension-reduction-growth}
        C_1\left(\abs{\bm F}^p+\frac{1}{\abs{\det\bm F}^s}-C_2\right)\leq W(\cdot,\bm F) \leq C_3\left(\abs{\bm F}^p+\frac{1}{\abs{\det\bm F}^s}+C_4\right)\tag{H2}
    \end{align}
    for every $\bm F\in\RR^{3\times3}$ with $\det\bm F>0$.
    \item We also assume that $W$ satisfies material objectivity (frame indifference) i.e., $W(\cdot,\bm Q\bm F)=W(\cdot,\bm F)$ for all $\bm Q\in \text{SO}(3)$.
\end{enumerate}

\end{definition}

We now define the positive-thickness elastic energy functional, in terms of Young measures. 
\begin{definition}
   Let $\e> 0$ and $ \mathcal{E}_\e: L^\infty_w(\Omega_1,\mathcal{M}(\RR^{3\times 3})) \to [0, +\infty]$ be defined as 
\begin{align*}
    \mathcal{E}_\e[\mu]:=
    \begin{cases}
        \int_{\Omega_1} W(x',\nabla_\e\bm f)\dif x&\text{if there exists }\bm f\in\mathcal{A}_\e\text{ s.t. }\mu=\delta_{\nabla_\e\bm f}\\
        +\infty&\text{otherwise in } L^\infty_w(\Omega_1,\mathcal{M}(\RR^{3\times 3})),
    \end{cases}
\end{align*}
where
\begin{equation}
\label{eq:admiss}
    \mathcal{A}_\e:=\left\{\bm f\in W^{1,p}(\Omega_1,\RR^3):\det\nabla_\e\bm f>0\right\},
\end{equation}
and $\nabla_\e:=\left(\diffp{{}}{{x_1}},\diffp{{}}{{x_2}},\frac{1}{\e}\diffp{{}}{{x_3}}\right)$ is the rescaled gradient. 
\end{definition}

Recall that in the first case above, we say that $\bm f$ is the \textit{underlying deformation} of $\mu$. Since $W(x',\cdot)$ and $\delta_{\nabla_\varepsilon\bm f}$ only depend on the gradients, the underlying deformation of any $\mu$ is determined only up to an additive constant. We will select the mean zero representative whenever necessary.

We now introduce the proposed limiting functional:
\begin{definition}
Let $\mathcal{E}_0: L^\infty_w(\Omega,\mathcal{M}(\RR^{3\times2})) \to [0, +\infty]$ be defined as 
    \begin{align*}
				\mathcal{E}_0[\nu]:=\begin{cases}
					\int_\Omega\langle\nu_{x'},W_0(x',\cdot)\rangle\ dx' &\text{if }\nu\in\calJ\calY^{p,-q}(\Omega,\RR^{3\times2}),\\
					+\infty &\text{otherwise in }L^\infty_w(\Omega,\mathcal{M}(\RR^{3\times2})),
				\end{cases}
			\end{align*}
where $W_0:\Omega\times\RR^{3\times2}\rightarrow[0,+\infty]$ is defined by $W_0(x',\bar{\bm F}):=\inf_{\bm z\in\RR^3}W(x',(\bar{\bm F}|\bm z))$ and $q=\frac{ps}{p+s}>0$.
\end{definition}

As a consequence of dimension reduction, the domain for the limiting energy functional will consist of Young measures on a lower dimensional space. These are related to Young measures in the domain of the energy functional with positive thickness by projection operators, which we now define. 
\begin{definition}
    The \textit{through-thickness averaging operator} is defined as 
\begin{align*}
    \mathsf{A}:L^\infty_w\left(\Omega_1,\mathcal{M}(V)\right)\rightarrow L^\infty_w(\Omega,\mathcal{M}(V))\\
    \innerpdt{(\mathsf{A}\nu)_{x'},\phi}:=\int_{-1/2}^{1/2}\innerpdt{ \nu_{(x',x_3)},\phi}\dif x_3\text{ for all }\phi\in C_c(V).
\end{align*}
It is straightforward to check that $\mathsf{A}$ is continuous and maps Young measures to Young measures. Finally we have the \textit{average-projection} operator 
\begin{align*}
    \mathsf{R}:L^\infty_w\left(\Omega_1,\mathcal{M}(\RR^{3\times3})\right)\rightarrow L^\infty_w(\Omega,\mathcal{M}(\RR^{3\times2}))\\
    \mathsf{R}:=\mathsf{P}_{\#}\circ\mathsf{A}=\mathsf{A}\circ\mathsf{P}_{\#},
\end{align*}
where $\mathsf{P}:\RR^{3\times 3}\rightarrow\RR^{3\times 2}$ denotes the \textit{projection operator} from $\RR^{3\times 3}$ onto $\RR^{3\times 2}$, and $\#$ denotes the pushforward measure. 
\end{definition}
Here, the commutativity of the averaging and projection operators can be checked by using their respective definitions. Furthermore, $\mathsf{R}$ is continuous and maps Young measures into Young measures.

We now state the main result of this section: the derivation of an effective membrane model for thin elastic sheets via Young measures. 
\begin{theorem}[$\Gamma$-convergence]\label{thm:gc}
    Fix any $\varepsilon_n\rightarrow0$. As $n\rightarrow +\infty$, the sequence of functionals $\{\mathcal{E}_{\e_n}\}$ $\Gamma$-converges to $\mathcal{E}_0$ in the following sense:
    \begin{enumerate}[(i)]
        \item  (Compactness) Suppose $\{\mu_n\}\subset L^\infty_w(\Omega_1,\mathcal{M}(\RR^{3\times3}))$ is a sequence such that $\sup_n \mathcal{E}_{\e_n}[\mu_n]<+\infty$. Then there exists $\mu\in L^\infty_w(\Omega_1,\mathcal{M}(\RR^{3\times3}))$ and a (not relabelled) subsequence $\{\mu_n\}$ such that
        \begin{align*}
            \mu_n\xrightharpoonup{*}\mu\text{ in }L^\infty_w(\Omega_1,\calM(\RR^{3\times 3})),
        \end{align*}
        and
        \begin{align*}
            \nu:=\mathsf{R}\mu\in\mathcal{JY}^{p,-q}(\Omega,\RR^{3\times2}),
        \end{align*}
        with $q=\frac{ps}{p+s}>0$.
        \item (Liminf inequality) For every $\nu \in L^\infty_w(\Omega, \calM(\bbR^{3\times 2}))$ and for every sequence $\{\mu_{n}\}\subset L^\infty_w(\Omega_1, \calM(\bbR^{3\times 3}))$ such that
        $$
        \mathsf{R}\mu_{n} \weakstar \nu \quad \mbox{in } L^\infty_w(\Omega, \calM(\bbR^{3\times 2}))
        $$
        we have
        \begin{equation*}
            \liminf\limits_{n\rightarrow \infty} \mathcal{E}_{\e_n}[\mu_{n}]\geq \mathcal{E}_0[\nu].
        \end{equation*}
        \item (Recovery sequence)
        For every  $\nu$ in $ L^\infty_w(\Omega, \calM(\bbR^{3\times 2}))$ there exists a sequence $\{\mu_{n}\}\subset L^\infty_w(\Omega_1, \calM(\bbR^{3\times 3}))$ such that
        $$
        \mathsf{R}\mu_{n} \weakstar \nu \quad \mbox{in } L^\infty_w(\Omega, \calM(\bbR^{3\times 2}))
        $$
        and
        \begin{equation*}
            \limsup_{n\rightarrow \infty} \mathcal{E}_{\e_n}[\mu_{n} ]\leq \mathcal{E}_0[\nu].
        \end{equation*}
    \end{enumerate}
\end{theorem}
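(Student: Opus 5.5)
We prove the three parts in order. Compactness and the liminf inequality come from the growth bounds (H2) and a pointwise lower bound on $W_0$. The recovery sequence comes from Theorem~\ref{thm:gym} together with a measurable selection of optimal third columns.

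\textbf{Pointwise bounds on $W_0$.} First we establish growth bounds for $W_0$. For $\bm F=(\bar{\bm F}|\bm z)$ with $\det\bm F>0$ we have $\det\bm F\le J(\bar{\bm F})\abs{\bm z}$. Hence (H2) gives
\[
W(x',\bm F)\ge C_1\Big(\abs{\bar{\bm F}}^p+\abs{\bm z}^p+J(\bar{\bm F})^{-s}\abs{\bm z}^{-s}-C_2\Big).
\]
Minimizing $\abs{\bm z}^p+J^{-s}\abs{\bm z}^{-s}$ over $\abs{\bm z}>0$ gives a quantity comparable to $J^{-ps/(p+s)}=J^{-q}$. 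Therefore
\[
c\big(\Phi(\bar{\bm F})-C\big)\le W_0(x',\bar{\bm F}).
\]
Choosing $\bm z=t\,\bm n(\bar{\bm F})$, with $\bm n$ the unit normal and $t$ the optimal scale, gives the matching upper bound $W_0\le C(\Phi+1)$. We also record two structural facts. $W_0=+\infty$ exactly on rank-deficient matrices. By coercivity in $\bm z$, the infimum defining $W_0$ is attained, and $W_0(x',\cdot)$ is lower semicontinuous. A measurable selection $\bm z(x',\bar{\bm F})$ of a minimizer exists, for instance by Aumann's theorem, since $W$ is Carathéodory.

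\textbf{Compactness and liminf.} Let $\mu_n=\delta_{\nabla_{\e_n}\bm f_n}$. From (H2) we get $\sup_n\int\abs{\nabla_{\e_n}\bm f_n}^p+(\det\nabla_{\e_n}\bm f_n)^{-s}<\infty$. The bound on the $p$-th moment gives tightness, so a subsequence converges weakly* to a Young measure $\mu$. By Poincaré, the mean-zero representatives of $\bm f_n$ converge weakly in $W^{1,p}$ to some $\bm f$ with $\partial_3\bm f=0$, so $\bm f=\bm u(x')$.

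Next we show that $\nu=\mathsf R\mu$ is a $p$-gradient Young measure on $\Omega$. Following Freddi--Paroni, we view $x_3$ as a parameter. Fubini and a diagonal argument in $x_3$-slices produce a sequence of in-plane gradients in $W^{1,p}(\Omega)$ generating the average. Alternatively, we can appeal to the Kinderlehrer--Pedregal characterization: Jensen's inequality for quasiconvex functions passes through the averaging operator $\mathsf A$.

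Pointwise, $(\det\nabla_{\e_n}\bm f_n)^{-s}$ combined with $\abs{\nabla_{\e_n}\bm f_n}^p$ controls $\Phi(\mathsf P\nabla_{\e_n}\bm f_n)$ by the computation in the previous step. Lower semicontinuity of Young-measure integrals of nonnegative lsc integrands then yields $\int\langle\nu,\Phi\rangle<\infty$, so $\nu\in\mathcal{JY}^{p,-q}$. This proves (i).

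For (ii), we may assume the energies are bounded. Then $W(x',\nabla_{\e_n}\bm f_n)\ge W_0(x',\mathsf P\nabla_{\e_n}\bm f_n)$. The integrand $W_0$ is nonnegative, normal and lsc, so the fundamental theorem on Young measures gives
\[
\liminf_n\int_{\Omega_1}W_0\big(x',\mathsf P\nabla_{\e_n}\bm f_n\big)\ge\int_{\Omega_1}\langle\mu,W_0\circ\mathsf P\rangle=\int_\Omega\langle\nu,W_0\rangle.
\]
Here the last equality uses that $\mathsf R\mu_n\to\nu$ and $\mathsf R\mu_n\to\mathsf R\mu$, so $\nu=\mathsf R\mu$ by uniqueness of limits.

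\textbf{Recovery sequence, the main obstacle.} If $\nu\notin\mathcal{JY}^{p,-q}$ there is nothing to prove. Otherwise, Proposition~\ref{prop:smooth-YM-generation} gives smooth immersions $\bm u^k$ generating $\nu$ with $\Phi(\nabla\bm u^k)$ equiintegrable. The naive ansatz $\bm f=\bm u^k+\e x_3\bm b^k$ requires a third column $\bm b^k(x')$ that is near-optimal for $W_0(x',\nabla\bm u^k)$, smooth, and positively oriented. The measurable minimizer $\bm z(x',\nabla\bm u^k)$ is not smooth, which is the main obstacle.

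Our first attempt will approximate $\bm z(x',\nabla\bm u^k)$ in $L^p$ by smooth $\bm b$. To keep $\det>0$ after approximation, we project onto the half-space $\{\bm b\cdot\bm n(\nabla\bm u^k)\ge\eta\}$. The error terms $\e x_3\nabla'\bm b$ vanish as $\e\to0$ for each fixed $k$, and the upper bound $W\le C(\abs{\bm F}^p+\det^{-s}+C)$ together with dominated convergence controls the energy. We then pass $k\to\infty$ by a diagonal argument, using metrizability of weak* convergence on bounded sets.

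The remaining step is to replace $W_0$ by a continuous version to ensure convergence of $\int W_0(x',\nabla\bm u^k)$ to $\int\langle\nu,W_0\rangle$. Equiintegrability of $\Phi(\nabla\bm u^k)$ combined with $W_0\le C(\Phi+1)$ gives the limsup if $W_0$ is continuous on rank-two matrices. We will check this using continuity of $W$ on $\{\det>0\}$ and the coercivity in $\bm z$. For Carathéodory dependence on $x'$, a Scorza-Dragoni truncation handles the general case.
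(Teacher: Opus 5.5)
Your proposal is correct. Parts (i) and (ii) follow the paper: the lower bound $W\ge W_0\circ\mathsf P$, the growth of $W_0$, lower semicontinuity of Young-measure integrals, and $\mathsf R\mu=\nu$ by uniqueness of limits. There are two small improvements. First, reducing the bound on $W_0$ to a one-dimensional minimization via $\det(\bar{\bm F}|\bm z)\le J(\bar{\bm F})|\bm z|$ is more direct than the paper's Euler--Lagrange computation with Lemma~\ref{lem:solution-to-optimization-problem}. Second, your Kinderlehrer--Pedregal argument supplies the step that the paper delegates to Freddi--Paroni: apply Jensen to the function $f\circ\mathsf P$, which is quasiconvex on $\RR^{3\times3}$, along the genuine 3D gradients $\nabla\bm f_n$, then average in $x_3$.

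The real difference is how you choose the third column in (iii).

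\emph{The paper's route.} It introduces the truncated sets $\Lambda^j_k$ and uses $\Upsilon^j_k\downarrow W_0(\cdot,\nabla\bm u^k)$. The continuous selection lemma of Anza Hafsa--Mandallena then gives a \emph{continuous} near-minimizer with the built-in bound $\det\ge 1/j_k$. Uniform smoothing therefore keeps $\det\ge 1/(2j_k)$ automatically. This is clean but rests on an external lemma.

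\emph{Your route.} You take a measurable minimizer $\bm z$, which is bounded because $W_0(\cdot,\nabla\bm u^k)$ is bounded for a smooth immersion. You mollify it and restore positivity with $\Pi_\eta\bm b=\bm b+(\eta-\bm b\cdot\bm n)^+\bm n$. This is self-contained, but it needs the limits in the right order, which you should write out.
\begin{itemize}
\item For fixed $\eta$, $\det(\nabla\bm u^k|\Pi_\eta\bm b_m)\ge\eta\min_{\bar\Omega}J(\nabla\bm u^k)$. Dominated convergence then gives convergence to $\int_\Omega W(x',(\nabla\bm u^k|\Pi_\eta\bm z))\,dx'$ as $m\to\infty$.
\item Then let $\eta\to0$. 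The projection never decreases $\bm b\cdot\bm n$ and moves $\bm z$ by at most $\eta$. Hence $W(x',(\nabla\bm u^k|\Pi_\eta\bm z))\le C\big(|\nabla\bm u^k|^p+(|\bm z|+1)^p+\det(\nabla\bm u^k|\bm z)^{-s}+1\big)$, which lies in $L^1$. Also $\Pi_\eta\bm z=\bm z$ on $\{\bm z\cdot\bm n\ge\eta\}$, which exhausts $\Omega$ up to a null set. A second dominated convergence yields $\int_\Omega W_0(x',\nabla\bm u^k)\,dx'$.
\item The projected field is only Lipschitz, which is enough for the ansatz $\bm u^k+\varepsilon x_3\bm b$.
\end{itemize}

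Your planned check that $W_0(x',\cdot)$ is continuous fills in something the paper leaves out. Upper semicontinuity holds because $\{\det>0\}$ is open. Lower semicontinuity follows from coercivity in $\bm z$ and the blow-up at $\det=0$. This gives the Carath\'eodory property that Proposition~\ref{prop:W0-growth-condition} asserts without proof. The Scorza--Dragoni remark is unnecessary, since the fundamental theorem applies to Carath\'eodory integrands directly.
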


\subsection{Relation to classical membrane theory}

Finally, we relate our membrane functional $\mathcal{E}_0$ to classical membrane theory. The main result of this subsection is that we can identify the asymptotic local oscillations (i.e. limiting gradient Young measure) of low-energy sequences. 

First, we recall the classical membrane theory under the positive-determinant constraint:
the energy corresponding to a deformation $\bm f:\Omega_\e\rightarrow\RR^3$ is given by
\begin{align}\label{eqn:3d-energy}
    E^{3D}_\varepsilon[\bm f] = \begin{cases}
        \frac{1}{\e}\int_{\Omega_\e}W(x',\nabla\bm f(x))\dif x &\bm f\in\mathcal{A}(\Omega_\e)\\
        +\infty &\text{otherwise},
    \end{cases}
\end{align}
where the admissible set is defined by
\begin{align*}
    \mathcal{A}(\Omega_\e):=\left\{\bm f\in W^{1,p}(\Omega_\e,\RR^3):\det\nabla\bm f>0\right\}.
\end{align*}

Using the change of variables $(\tilde x',\tilde x_3) \to (x',\frac{1}{\e} x_3)$, we obtain the rescaled energy
\begin{align*}
    E_\e[\bm f] = \begin{cases}
        \int_{\Omega_1}W(x',\nabla_\e\bm f(x))\dif x &\bm f\in\mathcal{A}_\e\\
        +\infty &\text{otherwise in } W^{1,p}(\Omega_1, \bbR^3).
    \end{cases}
\end{align*}
The admissible set for the rescaled problem $\mathcal{A}_\e$ is given by equation \eqref{eq:admiss}.

Le Dret and Raoult~\cite{le_dret_nonlinear_1995} derived the membrane $\Gamma$-limit under hypotheses excluding the positive-determinant constraint. Subsequent results of Ben Belgacem~\cite{belgacem1997methode,belgacem2000relaxation} and Anza-Hafsa and Mandallena~\cite{anza_nonlinear_2008} established the same form of the limit under this constraint. The limiting functional has the form
\begin{align*}
    E_{\text{LDR}}[\bm u]:=\int_\Omega QW_0(x',\nabla\bm u)\dif x',
\end{align*}
where $QW_0$ represents the quasiconvexification of $W_0$. 

Before stating the main result of this subsection, we need one final definition:
\begin{definition}
    For $\bm u\in W^{1,p}(\Omega,\RR^3)$, define the following admissible class of measures
\begin{align*}
    \mathcal{A}_{\bm u}:=\{\nu\in\mathcal{JY}^{p,-q}(\Omega,\RR^{3\times2}):[\nu_{x'}]=\nabla\bm u(x')\text{ a.e. in }\Omega\}
\end{align*}
and the set of minimizers
\begin{align*}
    \mathcal{M}_{\bm u}:=\arg\min_{\nu\in\mathcal{A}_{\bm u}}\mathcal{E}_0[\nu].
\end{align*}
\end{definition}

We now state the main result of this section: First, we recover the classical membrane theory $ E_{\text{LDR}}[\bm u]$ in terms of $\mathcal{E}_0[\nu]$: 
\begin{proposition}\label{prop:relation-to-LDR}
    For $\bm u\in W^{1,p}(\Omega,\RR^3)$
    \begin{align*}
         E_{\text{LDR}}[\bm u]=\min_{\nu\in\mathcal{A}_{\bm u}}\mathcal{E}_0[\nu].
    \end{align*}
    In particular, $\mathcal{M}_{\bm u}\neq\emptyset$.
\end{proposition}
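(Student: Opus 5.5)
We prove the two inequalities $\inf_{\nu\in\mathcal{A}_{\bm u}}\mathcal{E}_0[\nu]\ge E_{\text{LDR}}[\bm u]$ and $\le$, and show that the infimum is attained. Throughout we use the known properties of $W_0$: it is Carath\'eodory in $x'$, it blows up like $J(\bar{\bm F})^{-q}$ as $J\to0$, and it satisfies $c(\Phi(\bar{\bm F})-C)\le W_0(x',\bar{\bm F})\le C(\Phi(\bar{\bm F})+1)$. These follow from (H2) by optimizing over $\bm z$, which is exactly where the exponent $q=ps/(p+s)$ comes from. We also use that $QW_0$ is a quasiconvex Carath\'eodory integrand with $0\le QW_0\le C(1+|\cdot|^p)$. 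The growth bound for $QW_0$ holds because $W_0$ is finite and bounded by $C(1+|\cdot|^p)$ on, say, all matrices with $J$ bounded below, and every matrix is a laminate-type average of such matrices. We will take this from the literature (Ben Belgacem; Anza-Hafsa--Mandallena).

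\emph{Lower bound.} Let $\nu\in\mathcal{A}_{\bm u}$ with $\mathcal{E}_0[\nu]<\infty$. Since $\nu\in\mathcal{GY}^p$, the Kinderlehrer--Pedregal characterization gives Jensen's inequality for the quasiconvex integrand $QW_0(x',\cdot)$, which has $p$-growth:
\[
\langle\nu_{x'},QW_0(x',\cdot)\rangle\ge QW_0(x',[\nu_{x'}])=QW_0(x',\nabla\bm u(x'))\quad\text{for a.e. }x'.
\]
(For the $x'$-dependence we can argue instead via a generating $p$-equiintegrable sequence and weak lower semicontinuity of $\int QW_0(x',\nabla\cdot)$.) Since $W_0\ge QW_0$, integrating gives $\mathcal{E}_0[\nu]\ge E_{\text{LDR}}[\bm u]$. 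If $\mathcal{A}_{\bm u}$ were empty the statement would fail, but the construction below produces an element.

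\emph{Upper bound and attainment.} I would use the relaxation statement, Proposition~\ref{thm:relaxation}, together with the $\Gamma$-convergence Theorem~\ref{thm:gc}, but the direct route seems cleaner. By the relaxation theorem of Anza-Hafsa--Mandallena for the 2D functional $\bm v\mapsto\int_\Omega W_0(x',\nabla\bm v)$ restricted to immersions (or, alternatively, by the known membrane $\Gamma$-limit applied at fixed $\bm u$), there is a sequence $\bm v^k\rightharpoonup\bm u$ in $W^{1,p}$ with $J(\nabla\bm v^k)>0$ and $\int W_0(x',\nabla\bm v^k)\to E_{\text{LDR}}[\bm u]$. By the lower growth bound, $\sup_k\int\Phi(\nabla\bm v^k)<\infty$. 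Pass to a subsequence generating a Young measure $\nu$. Then:
\begin{enumerate}
\item $\nu\in\mathcal{GY}^p$ with barycenter $\nabla\bm u$.
\item $\int\langle\nu,\Phi\rangle\le\liminf_k\int\Phi(\nabla\bm v^k)<\infty$ by the fundamental lower semicontinuity theorem for Young measures with nonnegative lsc integrands. This theorem also ensures $\nu_{x'}$ does not charge $\{J=0\}$, where $\Phi=\infty$.
\end{enumerate}
Hence $\nu\in\mathcal{A}_{\bm u}$, and the same lower semicontinuity applied to the normal integrand $W_0\ge0$ gives
\[
\mathcal{E}_0[\nu]\le\liminf_k\int W_0(x',\nabla\bm v^k)=E_{\text{LDR}}[\bm u].
\]
Combined with the lower bound, this shows equality and that the minimum is attained, so $\mathcal{M}_{\bm u}\neq\emptyset$.

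\emph{Main obstacle.} The delicate step is producing the recovery sequence $\bm v^k$ that stays in the immersion class, i.e.\ the identification of the relaxation of $\int W_0$ under $J>0$ as $\int QW_0$, with the singular growth of $W_0$. If I cannot cite it directly, I would build it from Theorem~\ref{thm:gc}(iii) applied to a laminate/homogenized measure, or show $\inf_{\mathcal{A}_{\bm u}}\mathcal{E}_0\le\int Z W_0(\nabla\bm u)$ with measures built piecewise-constantly from near-optimal homogeneous gradient Young measures. Those near-optimal measures come from the definition of $QW_0$ via test functions, after first reducing to immersion test functions. Generation within $\mathcal{JY}^{p,-q}$ then follows from Theorem~\ref{thm:gym}, and a density/lsc argument in $\bm u$ handles general $\bm u$.
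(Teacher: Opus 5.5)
Your argument is correct, but your upper bound takes a different route from the paper's. Your lower bound is essentially the paper's: Jensen for the $p$-growth quasiconvex integrand $QW_0\le W_0$.

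\textbf{The paper's upper bound.} The paper never uses a two-dimensional relaxation theorem. It takes a three-dimensional recovery sequence $\bm u_\varepsilon$ from the membrane $\Gamma$-convergence theorem of Anza-Hafsa--Mandallena under $\det\nabla\bm u>0$ and rescales it to $\bm f_\varepsilon\in\mathcal{A}_\varepsilon$. It then applies its own compactness result, Proposition~\ref{thm:Young-measure-compactness}, to obtain $\mathsf{R}\delta_{\nabla_\varepsilon\bm f_\varepsilon}\overset{*}{\rightharpoonup}\nu\in\mathcal{JY}^{p,-q}$ with $[\nu]=\nabla\bm u$; the limit is identified through the thickness averages $\pi_\varepsilon$. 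It concludes with the liminf inequality of Theorem~\ref{thm:gc}.

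\textbf{Your upper bound.} You instead take a planar sequence $\bm v^k\rightharpoonup\bm u$ that realizes the weak relaxation of $\int_\Omega W_0(x',\nabla\cdot)$, so that $\int_\Omega W_0(x',\nabla\bm v^k)\to\int_\Omega QW_0(x',\nabla\bm u)$. You then apply Young-measure lower semicontinuity directly to $\Phi$ and $W_0$.

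\textbf{What each route buys.} Yours is shorter. It avoids the three-dimensional compactness and the $x_3$-averaging, and it exhibits a minimizer generated by gradients of planar immersions. The price is that you must cite the relaxation theorem for the singular integrand $W_0$ (Ben Belgacem's relaxation of singular functionals, or the corresponding two-dimensional results of Anza-Hafsa--Mandallena), with the usual caveat about $x'$-dependence. That theorem is essentially the core ingredient of the three-dimensional $\Gamma$-limit the paper cites, so neither route is more elementary in terms of external input.

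The paper's choice reuses its own $\Gamma$-convergence machinery. It also shows that a minimizer in $\mathcal{M}_{\bm u}$ arises as the $\mathsf{R}$-limit of an optimal three-dimensional sequence, which is exactly the mechanism behind Corollary~\ref{cor:linkrelax}. Your fallback sketch via laminates in the last paragraph is unnecessary once the relaxation theorem is cited.
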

As a corollary, we characterize the limiting Young measure of low-energy sequences: 
\begin{corollary}
\label{cor:linkrelax}
    Suppose $\bm u\in W^{1,p}(\Omega,\RR^3)$, identified with its trivial extension to $\Omega_1$ and let $\{\bm f_\varepsilon\}\subset\mathcal{A}_\varepsilon$ be such that $\bm f_\varepsilon\weakarrow\bm u$ in $W^{1,p}(\Omega_1,\RR^3)$ and 
    \begin{align*}
        \limsup_{\varepsilon\rightarrow0}E_\varepsilon[\bm f_\varepsilon]\leq E_{\text{LDR}}[\bm u].
    \end{align*}
    Then the family $\{\mathsf{R}\delta_{\nabla_\varepsilon\bm f_\varepsilon}\}$ is sequentially weakly${}^*$ precompact and all its limit points belong to $\mathcal{M}_{\bm u}$. Moreover, $\lim_{\varepsilon\rightarrow0}E_\varepsilon[\bm f_\varepsilon]= E_{\text{LDR}}[\bm u]$.
\end{corollary}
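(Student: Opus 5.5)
Since the corollary concerns the family $\{\mathsf{R}\delta_{\nabla_\varepsilon\bm f_\varepsilon}\}$, I would extract everything from Theorem~\ref{thm:gc} and Proposition~\ref{prop:relation-to-LDR}. Set $\mu_\varepsilon:=\delta_{\nabla_\varepsilon\bm f_\varepsilon}$. Because $\bm f_\varepsilon\in\mathcal{A}_\varepsilon$, we have $\mathcal{E}_\varepsilon[\mu_\varepsilon]=E_\varepsilon[\bm f_\varepsilon]$. By hypothesis the $\limsup$ of these energies is at most $E_{\text{LDR}}[\bm u]$. This quantity is finite: by Proposition~\ref{prop:relation-to-LDR} it equals $\min_{\mathcal{A}_{\bm u}}\mathcal{E}_0$, which is attained. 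Alternatively, if one only knows $E_{\text{LDR}}[\bm u]<\infty$ a priori, one may argue directly; otherwise there is nothing to prove beyond the trivial case. Hence $\sup_\varepsilon\mathcal{E}_\varepsilon[\mu_\varepsilon]<\infty$ along any sequence $\varepsilon_n\to0$ (discarding finitely many terms).

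\emph{Precompactness.} Theorem~\ref{thm:gc}(i) then yields a subsequence with $\mu_n\xrightharpoonup{*}\mu$ and $\nu:=\mathsf{R}\mu\in\mathcal{JY}^{p,-q}$. By continuity of $\mathsf{R}$, $\mathsf{R}\mu_n\xrightharpoonup{*}\nu$, so the family is sequentially weakly$^*$ precompact.

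\emph{Identifying limits.} Let $\nu$ be any limit point, attained along $\mathsf{R}\mu_n\xrightharpoonup{*}\nu$. Passing to a further subsequence as above, $\nu\in\mathcal{JY}^{p,-q}$.

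The first step is to show $[\nu_{x'}]=\nabla\bm u$. The growth bound (H2) gives $\int|\nabla_{\varepsilon}\bm f_\varepsilon|^p\le C$, so $\{\nabla'\bm f_{\varepsilon_n}\}$ is $p$-equiintegrable in the sense needed to pass barycenters to the limit; more precisely, $p>1$ boundedness gives uniform integrability of $|\bm A|$. Testing against $\phi\in C_c$ truncations of the identity then shows that the barycenter of $\mathsf{R}\mu_n$, namely $\int_{-1/2}^{1/2}\nabla'\bm f_{\varepsilon_n}\dif x_3$, converges weakly in $L^1$ to $[\nu_{x'}]$. Since $\bm f_{\varepsilon}\rightharpoonup\bm u$ with $\bm u$ independent of $x_3$, the same average converges weakly to $\nabla'\bm u$. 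Hence $\nu\in\mathcal{A}_{\bm u}$.

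The second step combines the liminf inequality Theorem~\ref{thm:gc}(ii) with Proposition~\ref{prop:relation-to-LDR}:
\begin{align*}
\min_{\mathcal{A}_{\bm u}}\mathcal{E}_0\le\mathcal{E}_0[\nu]\le\liminf_n E_{\varepsilon_n}[\bm f_{\varepsilon_n}]\le\limsup_{\varepsilon\to0}E_\varepsilon[\bm f_\varepsilon]\le E_{\text{LDR}}[\bm u]=\min_{\mathcal{A}_{\bm u}}\mathcal{E}_0 .
\end{align*}
All inequalities are therefore equalities, and $\nu\in\mathcal{M}_{\bm u}$.

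\emph{Convergence of energies.} To get convergence of the full family, apply the same chain along an arbitrary sequence realizing $\liminf_{\varepsilon\to0}E_\varepsilon[\bm f_\varepsilon]$, after extracting a subsequence via compactness. This shows the $\liminf$ is at least $E_{\text{LDR}}[\bm u]$, and together with the hypothesis on the $\limsup$ we obtain $\lim_{\varepsilon\to0}E_\varepsilon[\bm f_\varepsilon]=E_{\text{LDR}}[\bm u]$.

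The only delicate point is the barycenter identification, since weak$^*$ convergence in $L^\infty_w$ tests only $C_c$ functions. I expect to handle it by truncation together with the uniform $L^p$ bound, $p>1$.
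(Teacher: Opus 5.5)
Your proof is correct and follows the paper's argument. You get precompactness from Proposition~\ref{thm:Young-measure-compactness}, identify the barycenter so that $\nu\in\mathcal{A}_{\bm u}$, and then sandwich $\mathcal{E}_0[\nu]$ between the liminf inequality of Theorem~\ref{thm:gc} and the lower bound of Proposition~\ref{prop:relation-to-LDR}. The differences are minor: you check $[\nu_{x'}]=\nabla\bm u$ directly by truncation (using only $L^1$-equiintegrability from the $L^p$ bound, not $p$-equiintegrability as first written) instead of citing Proposition~\ref{thm:Young-measure-compactness}(ii), and you spell out the subsequence argument that extends the liminf identity to the full family as $\varepsilon\to0$, which the paper leaves implicit.
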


\section{Preliminary results on Young measures}\label{sec:notation}
We define the following class of elementary Young measures that plays an important role in the theory:
\begin{definition}[Elementary Young measure]
    Suppose $u:\Omega\rightarrow V$ is a measurable function. Then, the \textit{elementary Young measure} associated to $u$ is defined by $\mu_x\equiv\delta_{u(x)}$ for almost every $x\in\Omega$.
\end{definition}

The barycenter satisfies the following property:
\begin{lemma}\label{lem:weak-convergence-barycenter}
    Let $\{u_n\}\subset L^p(\Omega,V)$, $1<p<\infty$, be a uniformly bounded sequence generating the Young measure $\mu\in\mathcal{Y}(\Omega,V)$. Then,
    \begin{align*}
        u_n\weakarrow\innerpdt{\mu_x,\text{id}}\text{ in }L^p(\Omega,V).
    \end{align*}
\end{lemma}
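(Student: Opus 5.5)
The proof tests the weak convergence against $L^{p'}$ functions, using equiintegrability to pass from compactly supported to unbounded test integrands. Write $\bar u(x):=\innerpdt{\mu_x,\text{id}}$. By uniform boundedness in $L^p$ with $1<p<\infty$ (reflexivity), every subsequence of $\{u_n\}$ has a further subsequence converging weakly in $L^p(\Omega,V)$ to some limit $u$. It therefore suffices to show that any such weak limit equals $\bar u$ a.e.; uniqueness of the limit then gives convergence of the whole sequence. Along the way we also need $\bar u\in L^p$, which follows from the fundamental theorem of Young measures. Indeed, $\abs{\cdot}^p$ is nonnegative and continuous, so
\begin{align*}
\int_\Omega\innerpdt{\mu_x,\abs{\cdot}^p}\dif x\le\liminf_n\int_\Omega\abs{u_n}^p\dif x<\infty,
\end{align*}
and Jensen's inequality gives $\abs{\bar u(x)}^p\le\innerpdt{\mu_x,\abs{\cdot}^p}$. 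This lower semicontinuity is obtained by testing against truncations $\phi_R\in C_c(V)$ with $0\le\phi_R\uparrow\abs{\cdot}^p$ and $g\equiv1$, then letting $R\to\infty$ by monotone convergence.

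To identify the limit, fix $g\in L^\infty(\Omega)$ and a coordinate functional $\ell\in V^*$. Choose cutoffs $\chi_R\in C_c(V)$ with $0\le\chi_R\le1$, $\chi_R=1$ on $B_R$, and $\chi_R=0$ outside $B_{2R}$. Split
\begin{align*}
\int_\Omega \ell(u_n)g\dif x=\int_\Omega(\ell\chi_R)(u_n)g\dif x+\int_\Omega\ell(u_n)(1-\chi_R(u_n))g\dif x.
\end{align*}
Since $\ell\chi_R\in C_c(V)$ and $g\in L^1$, the first term converges to $\int_\Omega\innerpdt{\mu_x,\ell\chi_R}g\dif x$ by the definition of generation. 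For the second term,
\begin{align*}
\abs{\ell(v)}\mathbf{1}_{\abs{v}\ge R}\le\norm{\ell}\,\abs{v}^p/R^{p-1},
\end{align*}
so it is bounded by $C\norm{g}_\infty\sup_n\norm{u_n}_p^p/R^{p-1}$, uniformly in $n$. The same tail estimate applied to $\mu$, using $\int\innerpdt{\mu_x,\abs{\cdot}^p}<\infty$ and dominated convergence, gives $\int_\Omega\innerpdt{\mu_x,\ell\chi_R}g\dif x\to\int_\Omega\ell(\bar u)g\dif x$ as $R\to\infty$. Hence $\int\ell(u_n)g\to\int\ell(\bar u)g$ for all $g\in L^\infty$. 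Any weak $L^p$ limit $u$ must therefore satisfy $\int\ell(u-\bar u)g=0$ for every $g\in L^\infty$, which forces $u=\bar u$ a.e.

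The whole sequence then converges weakly to $\bar u$ in $L^p$, because $L^\infty$ is dense in $L^{p'}$ and the sequence is bounded. The only delicate step is the uniform tail control. It is exactly where $p>1$ is used: the bound $\abs{v}\mathbf 1_{\abs v\ge R}\le\abs v^p R^{1-p}$ provides the equiintegrability of $\{u_n\}$ that upgrades convergence tested on $C_c$ functions to convergence against the linear, unbounded integrand $\text{id}$.
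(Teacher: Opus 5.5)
Your proof is correct. The paper gives no argument of its own and only cites Rindler's Lemma~4.11. The standard argument behind that citation applies the equiintegrable case of the fundamental theorem (Theorem~\ref{thm:fundamental-property-of-Young-measures} here) to the integrand $(x,A)\mapsto g(x)\ell(A)$, with equiintegrability of $\{g\,\ell(u_n)\}$ coming from the $L^p$ bound and $p>1$.

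You instead derive this convergence directly from the definition of generation. You truncate with $\chi_R\in C_c(V)$ and control the tails uniformly via $\abs{v}\mathbf{1}_{\abs{v}\geq R}\leq\abs{v}^pR^{1-p}$. This is the same mechanism done by hand, and it is self-contained. It also avoids a small wrinkle: Theorem~\ref{thm:fundamental-property-of-Young-measures} is stated only for nonnegative integrands, so quoting it would require splitting $g\ell$ into positive and negative parts.

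A minor redundancy: your subsequence/uniqueness argument and your closing density argument ($L^\infty$ dense in $L^{p'}$ plus boundedness) each finish the proof on their own, so one can be dropped.
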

\begin{proof}
    Refer to~\cite[Lemma 4.11]{rindler_calculus_2018}.
\end{proof}

We state the following fundamental property of Young measures:
\begin{theorem}\label{thm:fundamental-property-of-Young-measures}
    Let $f:\Omega\times V\rightarrow[0,+\infty]$ be a non-negative Carath\'eodory function and suppose the sequence of measurable functions $\{u^k\}$ generates the Young measure $\mu\in\mathcal{Y}(\Omega,V)$. Then,
    \begin{align}\label{eqn:Young-measure-lower-semicontinuity}
        \liminf_{k\rightarrow\infty}\int_\Omega f(x,u^k(x))\dif x\geq\int_\Omega\innerpdt{\mu_x,f(x,\cdot)}\dif x.
    \end{align}
    Moreover, the sequence of functions $x\mapsto f(x,u^k(x))$ is weakly precompact in $L^1(\Omega)$ if and only if
    \begin{align*}
        f(\cdot,u^k(\cdot))\weakarrow \bar{f}\text{ in }L^1(\Omega),
    \end{align*}
    where $\bar{f}(x):=\int_V f(x,A)\dif\mu_x(A)$.
\end{theorem}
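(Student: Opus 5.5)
The lower semicontinuity inequality \eqref{eqn:Young-measure-lower-semicontinuity} comes first, by truncation and approximation. For $M>0$ and a cutoff $\chi_R\in C_c(V)$ with $0\le\chi_R\le1$ and $\chi_R\uparrow1$, the functions $f_{M,R}(x,A):=\min\{f(x,A),M\}\chi_R(A)$ are bounded Carath\'eodory integrands with compact support in $A$. For such integrands, weak* convergence $\delta_{u^k}\weakstar\mu$ gives
\begin{align*}
\int_\Omega f_{M,R}(x,u^k(x))\dif x\to\int_\Omega\innerpdt{\mu_x,f_{M,R}(x,\cdot)}\dif x .
\end{align*}
The definition only supplies test functions of product form $g(x)\phi(A)$ with $g\in L^1$ and $\phi\in C_c(V)$. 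To reach $f_{M,R}$ I will use the standard density argument: since $f_{M,R}\in L^1(\Omega,C_c(V))$, Scorza--Dragoni (or the separability of $C_c$ on a fixed compact set) approximates it in $L^1(\Omega,C_0)$ by finite sums $\sum_i g_i(x)\phi_i(A)$. The error is controlled uniformly in $k$ because each $\delta_{u^k(x)}$ and each $\mu_x$ has total mass one. Since $f\ge f_{M,R}$, this gives $\liminf_k\int f(x,u^k)\ge\int\innerpdt{\mu_x,f_{M,R}}$. Letting $R\to\infty$ and then $M\to\infty$, monotone convergence (applied for a.e. $x$, then in $x$) yields \eqref{eqn:Young-measure-lower-semicontinuity}.

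For the second statement, set $h_k(x):=f(x,u^k(x))$. The reverse implication is immediate, since a weakly convergent sequence in $L^1$ is weakly precompact. For the forward implication, assume $\{h_k\}$ is weakly precompact in $L^1$; by Dunford--Pettis it is equiintegrable and bounded in $L^1$. It then suffices to show that every weakly convergent subsequence has limit $\bar f$: the whole sequence converges because the limit is unique, and $\bar f\in L^1$ follows from the lower semicontinuity part with $f$ replaced by $gf$ for bounded $g\ge0$. Fix $g\in L^\infty(\Omega)$; by splitting $g$ into positive and negative parts we may take $g\ge0$. Applying the first part to the nonnegative Carath\'eodory integrand $g(x)f(x,A)$ gives $\liminf\int g h_k\ge\int g\bar f$.

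The reverse inequality is the main step. Write $g h_k=g\min\{h_k,M\}+g(h_k-M)^+$. The second term satisfies $\sup_k\int g(h_k-M)^+\le\|g\|_\infty\sup_k\int_{\{h_k>M\}}h_k$, which tends to zero as $M\to\infty$ by equiintegrability together with the uniform $L^1$ bound, since $|\{h_k>M\}|\le C/M$. For the truncated term, $\min\{f,M\}$ is a bounded Carath\'eodory function. Its limit can be computed by truncating in $A$ as before, with tightness supplied by $\mu_x$ being a probability measure, or more directly by invoking the fundamental theorem for bounded Carath\'eodory integrands, which follows from the density argument above. This gives $\int g\min\{h_k,M\}\to\int g\innerpdt{\mu_x,\min\{f,M\}}\le\int g\bar f$. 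Combining the two estimates gives $\limsup\int g h_k\le\int g\bar f+o_M(1)$, and letting $M\to\infty$ proves $h_k\weakarrow\bar f$ in $L^1$.

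The delicate point is the truncation in $A$ when there is no assumption that $\{u^k\}$ is tight. In the weak* topology of $L^\infty_w$, mass can escape to infinity, and since $\mu$ is assumed to be a Young measure, this would give a defect. I will handle it by noting that for the lower bound cutting off is harmless. For the upper bound, bounded integrands can lose mass only through $\int\min\{f,M\}(1-\chi_R)(u^k)$, which is at most $M\,|\{|u^k|>R\}|$. The probability hypothesis on $\mu_x$ forces this to vanish: $\int\innerpdt{\delta_{u^k},\chi_R}\to\int\innerpdt{\mu_x,\chi_R}\to|\Omega|$, so the escaping mass is asymptotically zero.
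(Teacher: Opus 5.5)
Your argument is correct. The paper does not prove this statement; it quotes it as the standard fundamental theorem of Young measures. What you give is essentially the standard proof from the literature: for the lower bound, truncation in height and in $A$ together with density of product test functions $g(x)\phi(A)$; for the second part, Dunford--Pettis, splitting $h_k$ at height $M$, and the tightness forced by $\mu_x$ being a probability measure.

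One phrase is loose: the ``more direct'' alternative for bounded integrands. Since $\min\{f,M\}\notin L^1(\Omega,C_0(V))$, the density argument alone does not give its convergence. The tightness step in your last paragraph is therefore genuinely needed, and you do supply it.
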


As mentioned earlier, since we are concerned with gradient Young measures, in our applications $V=\RR^{m\times n}$ (in particular $m=3$ and $n=2$). In general, it is not true that every sequence that generates a gradient Young measure is a sequence of gradients. We recall the following celebrated result of Kinderlehrer and Pedregal~\cite{kinderlehrer_characterizations_1991}, which characterizes $p$-gradient Young measures:
\begin{theorem}[Kinderlehrer-Pedregal]\label{thm:kinderlehrer-pedregal}
    Let $p\in[1,\infty)$. A parametrized measure $\nu\in L_w^\infty(\Omega,\mathcal{M}(\RR^{m\times n}))$ is a $p$-gradient Young measure if and only if the following conditions hold:
    \begin{enumerate}
        \item[(i)] $\int_\Omega\int_{\RR^{m\times n}}\abs{\bm F}^p\dif\nu_x(\bm F)\dif x<\infty$,
        \item[(ii)] $\innerpdt{\nu_x,\text{id}}=\nabla\bm u(x)$, $\bm u\in W^{1,p}(\Omega,\RR^m)$,
        \item[(iii)] $\innerpdt{\nu_x,f}\geq f(\innerpdt{\nu_x,\text{id}})$ a.e.~$x\in\Omega$ and for all quasiconvex $f$ satisfying $\abs{f(\bm F)}\leq C(1+\abs{\bm F}^p)$.
    \end{enumerate}
\end{theorem}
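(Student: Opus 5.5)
This is a standard result, so I will not reprove it from scratch and will instead cite it from the literature. The statement immediately above is the Kinderlehrer--Pedregal characterization, Theorem~\ref{thm:kinderlehrer-pedregal}. The natural plan is to invoke~\cite{kinderlehrer_characterizations_1991}, or the textbook treatments in~\cite{pedregal_parametrized_1997,rindler_calculus_2018}. For the reader, I still outline the structure of the argument.

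\emph{Necessity.} Let $\nabla\bm u^k\weakarrow\nabla\bm u$ in $L^p$ generate $\nu$.
\begin{enumerate}
\item[(i)] This follows from the lower semicontinuity inequality in Theorem~\ref{thm:fundamental-property-of-Young-measures}, applied with $f=\abs{\cdot}^p$, using the uniform $L^p$ bound on $\nabla\bm u^k$.
\item[(ii)] This is Lemma~\ref{lem:weak-convergence-barycenter}.
\item[(iii)] This is the Jensen inequality for quasiconvex integrands. First localize: by a blow-up argument at Lebesgue points, the measures $\nu_{x_0}$ are homogeneous gradient Young measures with barycenter $\nabla\bm u(x_0)$. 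Next, by the decomposition lemma (Fonseca--M\"uller--Pedregal), one may replace the generating sequence by a $p$-equiintegrable one with affine boundary values on the unit cube. For such a sequence, quasiconvexity gives $\int f(\nabla\bm u(x_0)+\nabla\bm\varphi_k)\ge f(\nabla\bm u(x_0))$. Passing to the limit via equiintegrability and the growth bound $\abs{f}\le C(1+\abs{\bm F}^p)$ then yields (iii).
\end{enumerate}

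\emph{Sufficiency.} The approach is by Hahn--Banach duality.
\begin{enumerate}
\item[(a)] First treat homogeneous measures with barycenter $\bm F_0$. The set of homogeneous $p$-gradient Young measures with barycenter $\bm F_0$ is convex and weak$*$ closed in the dual of the space of continuous functions with $p$-growth. If $\nu$ satisfied (i) and (iii) but lay outside this set, a separating integrand $f$ would exist. The quasiconvex envelope $Qf$ then contradicts (iii), since $\langle\nu,Qf\rangle\ge Qf(\bm F_0)$ while the separation forces the reverse strict inequality.
\item[(b)] Then pass to the inhomogeneous case. Approximate $\nu$ by measures that are piecewise constant in $x$, obtained by averaging over fine cubes. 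On each cube, generate with maps having boundary values $\nabla\bm u$, using the homogeneous case together with a Vitali covering and gluing argument. Finally, extract a diagonal sequence using metrizability of the weak$*$ topology on bounded sets.
\end{enumerate}

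The main obstacle is the duality step (a). It requires showing that the weak$*$ closure is attained in the $p$-growth class and that quasiconvex envelopes of $p$-growth integrands are well behaved. I would follow~\cite{pedregal_parametrized_1997} for this step.
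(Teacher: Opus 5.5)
Citing the result is what the paper does: Theorem~\ref{thm:kinderlehrer-pedregal} is stated without proof and attributed to the literature. So your plan matches the paper, and your sketches of necessity and of the homogeneous Hahn--Banach step (a) are the standard ones. One bibliographic aside: \cite{kinderlehrer_characterizations_1991} treats the $W^{1,\infty}$ case. The $W^{1,p}$ characterization is in Kinderlehrer and Pedregal's 1994 paper in J.~Geom.~Anal., and is also presented in~\cite{pedregal_parametrized_1997}.

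Your outline of step (b), however, would fail as written. The cube averages $\fint_Q\nu_x\dif x$ need not be homogeneous gradient Young measures, because that class is not convex once barycenters are allowed to vary. So step (a) cannot be applied to them.

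For a concrete counterexample, take $m=n=2$, $p\geq 2$, $\nu_x=\delta_{\nabla\bm u(x)}$ and $\bm u(x)=(x_1^2,x_1x_2)$. Then $\det\nabla\bm u=2x_1^2$, and on $Q=(0,h)^2$ one gets $\fint_Q\det\nabla\bm u\dif x=\tfrac{2}{3}h^2$ but $\det\fint_Q\nabla\bm u\dif x=\tfrac12 h^2$. Hence the average violates (iii) for the quasiaffine integrands $\pm\det$.

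Relatedly, a measure that is constant in $x$ on $Q$ has constant barycenter, so its generating maps converge weakly to an affine map. They cannot carry the boundary values of $\bm u$ unless $\bm u|_{\partial Q}$ is affine, and affine data on neighbouring cubes do not match. The gluing you describe is therefore inconsistent.

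The missing step is to translate to barycenter zero first. Define $\langle\tilde\nu_x,\phi\rangle:=\langle\nu_x,\phi(\cdot-\nabla\bm u(x))\rangle$. Since $f(\cdot-\nabla\bm u(x))$ is again quasiconvex with $p$-growth, (iii) gives $\langle\tilde\nu_x,f\rangle\geq f(0)$. The right-hand side no longer depends on $x$, so this inequality survives averaging over cubes, and (a) applies to the cube averages of $\tilde\nu_x$.

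These averages are generated by $p$-equiintegrable gradients of maps in $W^{1,p}_0(Q)$, which glue across cubes with no compatibility issue. A diagonal argument then yields $\bm\varphi_k\weakarrow 0$ with $\{\nabla\bm\varphi_k\}$ generating $\tilde\nu$. Finally, $\{\nabla\bm u+\nabla\bm\varphi_k\}$ generates $\nu$.
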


We will also frequently use the following stability property of Young measure generation:
\begin{lemma}\label{lemma:strong-approximation-YM-and-equiintegrability}
    Let $1<p<\infty$ and $U\subset\RR^2$ be bounded and measurable. Let $\{\bm F^k\}$, $\{\bm G^k\}\subset L^p(U,\RR^{3\times 2})$ be norm bounded sequences. Then
    \begin{enumerate}
        \item[(i)] if $\bm F^k-\bm G^k\rightarrow0$ in measure and $\{\bm F^k\}$ generates the Young measure $\nu$, then $\{\bm G^k\}$ also generates $\nu$,
        \item[(ii)] if $\norm{\bm F^k-\bm G^k}_{L^p(U)}\rightarrow0$ and $\{\bm F^k\}$ is $p$-equiintegrable, then $\{\bm G^k\}$ is also $p$-equiintegrable.
    \end{enumerate}
\end{lemma}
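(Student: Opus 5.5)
We prove the two parts separately; both come down to the definition of Young measure generation (weak$^*$ convergence of $\delta_{\bm F^k}$ tested against $\phi\in C_c(\RR^{3\times2})$ and $g\in L^1(U)$) and to standard facts about $L^p$-bounded sequences.

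\emph{Part (i).} It suffices to show that for every $\phi\in C_c(\RR^{3\times2})$ and $g\in L^1(U)$,
\begin{align*}
\int_U\bigl(\phi(\bm F^k)-\phi(\bm G^k)\bigr)g\dif x\rightarrow0.
\end{align*}
Since $\phi$ has compact support, it is uniformly continuous and bounded. Fix $\eta>0$ and choose $\rho>0$ with $\abs{\phi(\bm A)-\phi(\bm B)}<\eta$ whenever $\abs{\bm A-\bm B}<\rho$. Split $U$ into $\{\abs{\bm F^k-\bm G^k}<\rho\}$ and its complement $E_k$.
\begin{itemize}
\item On the first set the integrand is bounded by $\eta\abs{g}$, so that contribution is at most $\eta\norm{g}_{L^1}$.
\item On $E_k$ the integrand is bounded by $2\norm{\phi}_\infty\abs{g}$. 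Convergence in measure gives $\abs{E_k}\rightarrow0$, and absolute continuity of $\int\abs{g}$ then sends $\int_{E_k}\abs{g}\dif x$ to $0$.
\end{itemize}
Since $\eta$ is arbitrary, $\{\bm G^k\}$ generates $\nu$. The norm boundedness is not really needed here beyond ensuring that both sequences are admissible generators.

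\emph{Part (ii).} Recall that $p$-equiintegrability means that $\sup_k\int_E\abs{\bm G^k}^p\dif x$ can be made small uniformly in $k$ by taking $\abs{E}$ small (on a bounded domain this suffices). Use $\abs{\bm G^k}^p\le 2^{p-1}\bigl(\abs{\bm F^k}^p+\abs{\bm F^k-\bm G^k}^p\bigr)$. Given $\eta>0$:
\begin{itemize}
\item choose $K$ with $\norm{\bm F^k-\bm G^k}_{L^p(U)}^p<\eta$ for all $k>K$;
\item choose $\delta$ from the equiintegrability of $\{\abs{\bm F^k}^p\}$;
\item shrink $\delta$ further so that $\int_E\abs{\bm G^k}^p<\eta$ for each of the finitely many $k\le K$, using that a single $L^1$ function is absolutely continuous.
\end{itemize}
Then $\int_E\abs{\bm G^k}^p\le C\eta$ for every $k$ whenever $\abs{E}<\delta$, which is the required uniform smallness.

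Neither part has a real obstacle. The only point needing care is handling the finitely many initial indices in (ii) and the uniform continuity argument in (i).
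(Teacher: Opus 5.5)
Your proposal is correct and follows the paper's proof in both parts. In (i) you use the same split according to whether $\abs{\bm F^k-\bm G^k}$ exceeds a threshold, together with uniform continuity of $\phi$. In (ii) you use the same pointwise bound $\abs{\bm G^k}^p\le 2^{p-1}\bigl(\abs{\bm F^k}^p+\abs{\bm F^k-\bm G^k}^p\bigr)$ and treat the finitely many initial indices separately. The only difference is cosmetic: you test directly against $g\in L^1(U)$ and use absolute continuity of $\int\abs{g}$, whereas the paper tests against $\psi\in L^\infty(U)$. Your version matches the paper's definition of weak$^*$ convergence slightly more directly.
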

\begin{proof}
    \begin{enumerate}
        \item[(i)] We test against functions $\psi(x)\phi(\bm F)$ with $\psi\in L^\infty(U)$, $\phi\in C_c(\RR^{3\times2})$. For $\eta>0$, define $U^k_\eta:=\{x\in U:\abs{\bm F^k(x)-\bm G^k(x)}>\eta\}$. On $U\setminus U^k_\eta$, we have $\abs{\phi(\bm F^k)-\phi(\bm G^k)}\leq\omega_\phi(\eta)$, where $\omega_\phi$ is the modulus of continuity of $\phi$. On $U^k_\eta$, we can bound the integrand as $\abs{\psi(\phi(\bm F^k)-\phi(\bm G^k))}\leq2\norm{\psi}_{L^\infty}\norm{\phi}_{L^\infty}$ and use $\abs{U^k_\eta}\rightarrow0$ to show that
        \begin{align*}
            \limsup_{k\rightarrow\infty}\abs{\int_U\psi(\phi(\bm F^k)-\phi(\bm G^k))\dif x}\leq\norm{\psi}_{L^1}\omega_\phi(\eta).
        \end{align*}
        Letting $\eta\rightarrow0$ shows that $\{\bm G^k\}$ generates the same Young measure as $\{\bm F^k\}$.

        \item[(ii)] From the pointwise bound $\abs{\bm G^k}^p\leq2^{p-1}\left(\abs{\bm F^k}^p+\abs{\bm F^k-\bm G^k}^p\right)$, on any measurable set $E\subset U$,
        \begin{align*}
            \int_{E}\abs{\bm G^k}^p\dif x\leq 2^{p-1}\left(\int_{E}\abs{\bm F^k}^p\dif x+\norm{\bm F^k-\bm G^k}_{L^p(E)}^p\right).
        \end{align*}
        Now, let $\varepsilon>0$, then take $N>0$ large enough so that $2^{p-1}\norm{\bm F^k-\bm G^k}_{L^p(U)}^p\leq\frac{\varepsilon}{2}$ for $k>N$. Then using the $p$-equiintegrability of $\{\bm F^k\}$, choose $\delta_0>0$ small enough so that $2^{p-1}\int_{E}\abs{\bm F^k}^p\dif x<\frac{\varepsilon}{2}$ for all $k$ when $\abs{E}<\delta_0$. For each of the remaining $N$ terms, we can find $\delta_k>0$, $k=1,...,N$ such that $\int_{E}\abs{\bm G^k}^p\dif x<\varepsilon$ for $\abs{E}<\delta_k$ (from the absolute continuity of the Lebesgue integral). Picking $\delta:=\min_{k=0,...,N}\delta_k$, the $p$-equiintegrability of $\{\bm G^k\}$ follows.
    \end{enumerate}
\end{proof}

\section{Proof of Theorem \ref{thm:gym} and Proposition \ref{prop:smooth-YM-generation}}\label{sec:young-measure-generation}

\subsection{Origami-based replacement}

As mentioned in the introduction, the origami construction of Lemma~\ref{lemma:replacement-inside-triangle} relies on the extension theorem of Brehm~\cite{brehm1981extensions}. Recall that for a closed convex polyhedral set $K\subset\RR^m$, a map $\bm{\Psi}:K\rightarrow\RR^n$ is called a \textit{piecewise affine isometry}  (or origami map) if it is continuous and there exist finitely many closed polyhedral sets $K_1,...,K_N$ with $K=\bigcup_{i=1}^NK_i$ such that $\bm{\Psi}|_{K_i}$ is an affine isometry for each $i$.
\begin{theorem}[Brehm extension]\label{thm:brehm}
    Let $m\leq n$, $M\subset\RR^m$ be a finite set and $\bm{\Psi}:M\rightarrow\RR^n$ be distance-reducing, i.e. $\abs{\bm{\Psi}(x)-\bm{\Psi}(y)}\leq\abs{x-y}$ for all $x,y\in M$. Then, there exists a piecewise affine isometry $\tilde{\bm{\Psi}}:\RR^m\rightarrow\RR^n$ with $\tilde{\bm{\Psi}}|_{M}=\bm{\Psi}$.
\end{theorem}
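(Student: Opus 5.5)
We would prove Theorem~\ref{thm:brehm} by induction on the cardinality of $M$, building the extension one point at a time. The inductive step is an \emph{insertion lemma}: if $\tilde{\bm\Psi}$ is a piecewise affine isometry extending $\bm\Psi|_{M'}$, and $y\notin M'$ is a further point such that $\bm\Psi$ is distance-reducing on $M'\cup\{y\}$, then $\tilde{\bm\Psi}$ can be modified into a new piecewise affine isometry that still extends $\bm\Psi|_{M'}$ and sends $y$ to $\bm\Psi(y)$.

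\textbf{Base cases.} The case $\abs{M}=1$ is trivial: take an affine isometry $\RR^m\to\RR^n$, which exists since $m\le n$. The one-dimensional model case $m=1$ shows the basic mechanism. Two points $a<b$ must go to targets at distance $\le b-a$. Insert a single fold point $c\in(a,b)$ and send $[a,c]$ and $[c,b]$ isometrically onto two segments of lengths $c-a$ and $b-c$ joining $\bm\Psi(a)$ to $\bm\Psi(b)$. Such a two-segment path exists exactly when $\abs{\bm\Psi(a)-\bm\Psi(b)}\le b-a$, since its endpoints can be placed anywhere in the annulus of radii $\abs{(c-a)-(b-c)}$ and $b-a$, and $c$ can be chosen so that the inner radius is $0$. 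Outside $[a,b]$ we continue by unit-speed rays.

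\textbf{Inductive step.} The higher-dimensional step would be a conical version of this fold, carried out in three stages.
\begin{enumerate}
\item Triangulate $\RR^m$ finely enough that $\tilde{\bm\Psi}$ is an affine isometry on each closed simplex, with every point of $M'$ a vertex and $y$ a vertex.
\item Using $m\le n$, work in the Euclidean cone over the closed star of $y$: replace $\tilde{\bm\Psi}$ on the star by the cone, with apex $\bm\Psi(y)$, over the image of the link. For this cone to be realizable by a piecewise affine isometry, each radial segment from $y$ to a link point $z$ must be mapped onto a (possibly folded) path of length exactly $\abs{y-z}$ ending at $\tilde{\bm\Psi}(z)$. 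By the one-dimensional fold, this requires $\abs{\bm\Psi(y)-\tilde{\bm\Psi}(z)}\le\abs{y-z}$ for all $z$ in the link.
\item Make the folds compatible across neighbouring radial segments, so that the result is continuous and piecewise isometric rather than just isometric along rays. The idea is to fold along level sets of a piecewise linear ``distance to $y$'' function.
\end{enumerate}

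\textbf{Main obstacle.} The hardest point is the inequality $\abs{\bm\Psi(y)-\tilde{\bm\Psi}(z)}\le\abs{y-z}$ for link points $z$. It is known only at the points of $M'$, not for arbitrary $z$, because an arbitrary piecewise affine isometry extension can move $\tilde{\bm\Psi}(z)$ too far from $\bm\Psi(y)$. To overcome this, we would strengthen the induction hypothesis. We would demand, in addition, that the extension be \emph{Kirszbraun-optimal} on a prescribed large ball $B$ containing $M$ and all future insertion points. The intended meaning is that for every $w\in B$ the set $\bigcap_{x\in M'}\overline{B}(\bm\Psi(x),\abs{w-x})$ is nonempty and contains $\tilde{\bm\Psi}(w)$. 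Nonemptiness of these intersections is exactly Kirszbraun's theorem applied to the finite set $M'\cup\{w\}$. We would first try to obtain this strengthened property by choosing $\tilde{\bm\Psi}$ on each cell of the Voronoi diagram of $M'$ as a fold map retracting toward the target of the cell's site.

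If this stronger hypothesis cannot be propagated, the fallback is to follow Brehm's original argument. There one first builds a piecewise \emph{linear} short extension, by an explicit Kirszbraun construction on a triangulation with vertex set containing $M$. Each simplex on which the map is strictly short is then replaced by an origami map with the same vertex values, using the fold construction simplex by simplex and matching on common faces by induction on the skeleton dimension.
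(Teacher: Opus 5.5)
The paper does not prove this statement. It quotes it from Brehm~\cite{brehm1981extensions} and uses it as a black box in Step~1 of Lemma~\ref{lemma:replacement-inside-triangle}. Judged on its own, your $m=1$ argument is fine. (For $n=1$ the reachable endpoints do not fill an annulus, but choosing $c$ with $(c-a)-(b-c)=\bm\Psi(b)-\bm\Psi(a)$ works.) For $m\ge2$, however, the inductive step has a genuine gap, and your repair does not close it.

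\textbf{The strengthened hypothesis is vacuous, and local insertion fails.} Every piecewise affine isometry of $\RR^m$ is $1$-Lipschitz. So the condition $\abs{\tilde{\bm\Psi}(w)-\bm\Psi(x)}\le\abs{w-x}$ for $x\in M'$ holds automatically, and it gives no control of $\abs{\bm\Psi(y)-\tilde{\bm\Psi}(z)}$ for the \emph{new} target. Worse, no local insertion near $y$ can work in general. Take $m=n=2$, $M=\{0,e_1\}$, $\bm\Psi(0)=0$, $\bm\Psi(e_1)=-e_1$.
\begin{enumerate}
\item For $M'=\{0\}$ your hypotheses allow $\tilde{\bm\Psi}=\mathrm{id}$.
\item For $y=e_1$ and $z=(1+t)e_1$ one has $\abs{\bm\Psi(y)-\tilde{\bm\Psi}(z)}=2+t>t=\abs{y-z}$ for every $t\ge0$.
\item So the inequality required in your Step~2 fails on the link of every bounded star of $y$. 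In fact no $1$-Lipschitz extension of $\bm\Psi$ coincides with $\tilde{\bm\Psi}$ outside a bounded set.
\item The actual extension is the fold $(a,b)\mapsto(-\abs{a},b)$, which differs from $\tilde{\bm\Psi}$ on a whole half-plane.
\end{enumerate}
More generally, if $\tilde{\bm\Psi}(y)\neq\bm\Psi(y)$, the inequality fails at every $z$ with $2\abs{z-y}<\abs{\tilde{\bm\Psi}(y)-\bm\Psi(y)}$, so triangulating finely works against you. An induction of this kind needs either a hypothesis that anticipates the later targets or a global insertion mechanism, for instance post-composition with folds of $\RR^n$. The proposal has neither.

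\textbf{Steps 2--3 are circular, and so is the fallback.} Even when the link inequality holds, Steps~2--3 are the whole difficulty, not a routine step: you need a piecewise isometry on the star with prescribed values on the link and at $y$. Folding along level sets of a distance to $y$ fixes lengths along rays but not the angles between neighbouring rays. Each rigid piece between two level sets must also be congruent to its image, which fold positions alone cannot arrange in general, and nothing in the proposal produces such a map. Moreover, a $1$-Lipschitz map that agrees with an affine isometry at the vertices of a simplex agrees with it on the whole simplex. Hence this boundary-value problem is just an instance of the theorem for the finite set $\{y\}\cup\{\text{link vertices}\}$, and appealing to it is circular unless you give an independent construction.

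The fallback has the same defect. Replacing a short affine piece by an origami map with the same values on the \emph{whole boundary} of its simplex requires two things:
\begin{enumerate}
\item folded edge data that stay short on every face, which unsynchronized folds near a common vertex violate (this is exactly why the zigzags in Lemma~\ref{lemma:replacement-inside-triangle} are synchronized);
\item a filling result for short boundary data, which is again Brehm's theorem. The paper derives that lemma from Brehm, not conversely.
\end{enumerate}
The fallback also leaves untreated the simplices on which the piecewise linear Kirszbraun extension is not strictly short. These are unavoidable as soon as $\bm\Psi$ preserves some distance, as in the example above.
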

In the context of our proof, we apply this extension with $m=2$ and $n=3$ and the finitely many polyhedral pieces can be assumed to be a finite triangulation of the domain. Furthermore, since each piece of $\tilde{\bm{\Psi}}$ is an affine isometry, the map itself is distance-reducing, i.e., $1$-Lipschitz and $\nabla\tilde{\bm{\Psi}}$ has orthonormal columns, thus $\abs{\partial_1\tilde{\bm{\Psi}}\times\partial_2\tilde{\bm{\Psi}}}=1$ a.e.

With Theorem~\ref{thm:brehm} in hand, we are ready to present Lemma~\ref{lemma:replacement-inside-triangle}, which allows us to replace a nondegenerate, strictly short affine map on a triangle $T$ by a piecewise affine map with the same boundary values whose Jacobian equals one away from a thin boundary layer. The construction is illustrated in Figure~\ref{fig4}, and the proof proceeds in two steps. In Step~1, we work on a shrunk copy of $T$ and superimpose a zigzag profile on the affine boundary data. The zigzag is chosen so that the resulting map is a piecewise isometry on each side of the shrunk triangle, and so that its restriction to the finite set of all peak and valley points is distance reducing. We then apply Theorem~\ref{thm:brehm} to extend this map to a piecewise affine isometry on the shrunk triangle. In Step~2, we interpolate between this map on the boundary of the shrunk triangle and the original affine map on $\partial T$ by an explicit piecewise affine construction in the collar region between the two triangles. 

\begin{figure}[!t]
\centering
\input{fig-replacement-in-triangle.tikz}
\caption{A representation of the map $\bm w$ constructed in Lemma~\ref{lemma:replacement-inside-triangle}. On the boundary of the inner triangle, $\partial T^\delta$, $\bm w$ restricts to the zigzag function $\bm \Psi$. In the interior, $T^\delta$, $\bm{w}=\tilde{\bm{\Psi}}$ is given by a piecewise isometry constructed using Brehm's extension theorem. In the collar region $T\setminus T^\delta$, $\bm{w}$ is a piecewise linear interpolation between the zigzag map on $\partial T^\delta$ and the affine boundary map on $\partial T$.}
\label{fig4}
\end{figure}

We use $D^\lambda(x):=\lambda x$ to denote a dilation in $\RR^2$ by a factor $\lambda$.
\begin{lemma}\label{lemma:replacement-inside-triangle}
    Let $T\subset\RR^2$ be a nondegenerate triangle whose interior angles are bounded below by $a_0>0$ (without loss of generality, assume that the centroid of $T$ lies at the origin) and let $\bm u:T\rightarrow\RR^3$ be a non-singular, strictly short affine map, i.e., $\bm u=\bm Ax+\bm b$ for some $\bm A\in \RR^{3\times 2}$ and $\bm b\in\RR^3$, $\abs{\bm u(x)-\bm u(y)}\leq\abs{x-y}$, $J(\bm A)\neq0$ and the largest singular value $\sigma_1(\bm A)<1$. For $0<\delta<1$, let $T^\delta:=D^{1-\delta} T$. Then, there exists a continuous and piecewise affine map $\bm w:T\rightarrow\RR^3$ such that $\bm w=\bm u$ on $\partial T$ satisfying
    \begin{alignat*}{2}
        \mathrm{Lip}(\bm w) &\leq C_1, \\
        \abs{\partial_1 \bm w \times \partial_2 \bm w}&\geq \abs{\partial_1 \bm u \times \partial_2 \bm u}&&\quad \text{on } T, \\
        \abs{\partial_1 \bm w \times \partial_2 \bm w}&= 1&&\quad \text{on } T^\delta,
    \end{alignat*}
    for some constant $C_1\equiv C_1(a_0)>0$.
\end{lemma}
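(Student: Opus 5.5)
We follow the two-step scheme described before the statement. We work with the three vertices $P_1,P_2,P_3$ of $T$ and the corresponding vertices $P_i^\delta=(1-\delta)P_i$ of $T^\delta$.

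\textbf{Step 1: zigzag boundary data on $\partial T^\delta$.}
On each side $[P_i^\delta,P_{i+1}^\delta]$ of $T^\delta$ we define a map $\bm\Psi$ by superimposing on $\bm u$ a zigzag in a direction normal to $\bm u$ of that side. Write $e=(P_{i+1}-P_i)/\abs{P_{i+1}-P_i}$. Strict shortness gives $\abs{\bm A e}\le\sigma_1<1$. Subdivide the side into $N$ equal segments and, on each segment, move the midpoint by $\pm h\,\bm\nu$, where $\bm\nu$ is a unit vector orthogonal to $\bm A e$. We choose $h$ so that each half-segment has unit speed:
\begin{align*}
\abs{\bm A e}^2+(2h/\ell)^2=1,\qquad \ell=\text{segment length}.
\end{align*}
Then $\bm\Psi$ restricted to each side is a piecewise isometry of that segment. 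At the vertices we keep $\bm\Psi(P_i^\delta)=\bm u(P_i^\delta)$ (zigzag amplitude zero at the nodes), so $\bm\Psi$ is continuous on $\partial T^\delta$.

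Let $M$ be the finite set of all nodes and peaks. The key claim is that $\bm\Psi|_M$ is distance-reducing, including for pairs of points on different sides.
\begin{itemize}
\item Nodes: $\bm\Psi=\bm u$ there, and $\bm u$ is $\sigma_1$-Lipschitz, so these pairs are strictly short.
\item Pairs involving a peak: the displacement from $\bm u$ is at most $h$. So $\abs{\bm\Psi(x)-\bm\Psi(y)}\le \sigma_1\abs{x-y}+2h$. This is at most $\abs{x-y}$ whenever $\abs{x-y}\ge 2h/(1-\sigma_1)$.
\item Close pairs: taking $N$ large (so $h\sim\ell$ is small), close pairs either lie on the same side, where the isometric zigzag is $1$-Lipschitz along the segment, or lie near a common vertex $P_i^\delta$ on two sides. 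Near a vertex the angle is at least $a_0$, so the chord between the points is comparable to their arc distance. This is the delicate case. One fix is to choose $\bm\nu$ on adjacent sides compatibly, or to taper the zigzag to zero in a neighborhood of each vertex of size $\gg h$. There we would have to use only the strict shortness, which forbids exact unit speed. To avoid this, instead make the first zigzag period adjacent to each vertex have its peak displacement chosen so that the distance to the vertex's image is exactly the arc length. Then check that the cross-side distances, controlled via the angle $a_0$ and the law of cosines, remain short.
\end{itemize}
Once $\bm\Psi|_M$ is distance-reducing, Theorem~\ref{thm:brehm} gives a piecewise affine isometry $\tilde{\bm\Psi}$ of $\RR^2$ agreeing with $\bm\Psi$ on $M$. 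Since $\bm\Psi$ is affine isometric between consecutive points of $M$, and $\tilde{\bm\Psi}$ is $1$-Lipschitz and agrees at the endpoints, $\tilde{\bm\Psi}$ coincides with $\bm\Psi$ on each segment (equality case of the triangle inequality). Set $\bm w=\tilde{\bm\Psi}$ on $T^\delta$, so that $\abs{\partial_1\bm w\times\partial_2\bm w}=1$ there.

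\textbf{Step 2: collar interpolation.}
Triangulate the collar $T\setminus T^\delta$ using the points of $M$ on $\partial T^\delta$ together with their radial projections onto $\partial T$ (scaling by $(1-\delta)^{-1}$). Define $\bm w$ as the piecewise affine interpolant of $\bm u$ on $\partial T$ and $\bm\Psi$ on $\partial T^\delta$; refine so that each collar triangle is affinely interpolated. We must choose $N$ relative to $\delta$ so that the collar triangles are shape-regular: the tangential mesh size is comparable to $\delta\,\mathrm{diam}\,T$, and the angles are bounded below in terms of $a_0$. Then the gradients are bounded by the data oscillation divided by the mesh size, giving $\mathrm{Lip}(\bm w)\le C_1(a_0)$, since $h\lesssim\ell\sim\delta$.

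For the Jacobian bound in the collar, consider each collar triangle. Its gradient is $\bm A$ plus a perturbation which, by the choice of $\bm\nu$ orthogonal to the image plane components, adds a normal component to one column. Computing the cross product $\partial_1\bm w\times\partial_2\bm w$ directly, the tangential part equals that of $\bm A$ and a normal-direction contribution is added. Provided $\bm\nu$ is chosen normal to the plane $\bm A\RR^2$ (possible since $\bm\nu\perp\bm A e$ can be taken as the unit normal to $\operatorname{Im}\bm A$), the added terms are orthogonal to $\partial_1\bm u\times\partial_2\bm u$, yielding $\abs{\partial_1\bm w\times\partial_2\bm w}\ge\abs{\partial_1\bm u\times\partial_2\bm u}$.

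\textbf{Main obstacle.}
I expect the main obstacle to be verifying that $\bm\Psi|_M$ is distance-reducing for pairs near the corners of $T^\delta$, where two zigzags meet. My first attempt would be to use the single normal $\bm\nu$ to $\operatorname{Im}\bm A$ on all sides, with the zigzags on adjacent sides vanishing at the shared vertex. Then estimate
\begin{align*}
\abs{\bm\Psi(x)-\bm\Psi(y)}^2=\abs{\bm A(x-y)}^2+(\text{normal offset})^2
\end{align*}
using the angle bound $a_0$.
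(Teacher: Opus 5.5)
Your Step 2 is essentially the paper's collar construction ($\bm w=\bm u+\omega\bm n$ on a radial triangulation of $T\setminus T^\delta$, with the Jacobian bound coming from orthogonality to $\bm n$). Step 1, however, has a genuine gap. The distance-reducing property of $\bm\Psi|_M$ for pairs near a vertex of $T^\delta$, which you flag and leave open, is in fact false for the zigzag you describe. With $N$ equal subdivisions per side and amplitudes fixed by unit speed, $h_k=\tfrac{\ell_k}{2}\beta_k$ with $\beta_k=\sqrt{1-\abs{\bm A a_k}^2}$, the teeth on the two sides meeting at a vertex are out of phase.

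For a concrete failure, let $T$ be equilateral, $\bm A e_1=\epsilon\,\bm e_1$, $\bm A e_2=\sqrt{0.7}\,\bm e_2$, with $\bm\nu=\bm n=\bm e_3$ as your Step 2 requires. Let $P'$ be a vertex of $T^\delta$ with edge directions $(1,0)$ and $(\tfrac12,\tfrac{\sqrt3}{2})$. The first peak $x=P'+\tfrac{\ell}{2}(1,0)$ has offset $\tfrac{\ell}{2}\sqrt{1-\epsilon^2}$. The first interior node $y=P'+\ell(\tfrac12,\tfrac{\sqrt3}{2})$ has offset $0$. Since $x-y=(0,-\tfrac{\sqrt3}{2}\ell)$,
\begin{align*}
\abs{\bm\Psi(x)-\bm\Psi(y)}^2=\abs{\bm A(x-y)}^2+\tfrac14(1-\epsilon^2)\ell^2=\left(0.775-\tfrac{\epsilon^2}{4}\right)\ell^2>\tfrac34\ell^2=\abs{x-y}^2
\end{align*}
for small $\epsilon$, so Theorem~\ref{thm:brehm} cannot be applied. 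Your proposed remedies do not repair this:
\begin{itemize}
\item The defect is scale invariant, so refining $N$ does not help.
\item The region where your crude far-field bound fails extends over roughly $1/(\sqrt\lambda\sin a_0)$ teeth from the vertex, with $\lambda=1-\sigma_1^2$. Adjusting only the first period therefore does not suffice; in the example the first peak already has unit speed.
\item Tapering gives up the unit speed you need for the Brehm extension to coincide with $\bm\Psi$ along $\partial T^\delta$.
\end{itemize}

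The missing idea is to synchronize the zigzags at each vertex in the $\beta$-weighted arclength. On both sides adjacent to a vertex, the paper uses the same amplitude $H$ and teeth of half-width $H/\beta_k$ (profile $H\psi(\beta_k t/H)$) for the first $m=\lceil 1/(2\sqrt\lambda\sin a_0)\rceil$ teeth. The sample points then sit at $u\beta_i=pH$ and $s\beta_j=qH$ with integers $p,q\ge0$, with offsets $H(p\bmod 2)$ and $H(q\bmod 2)$. Hence
\begin{align*}
\abs{x-y}^2-\abs{\bm\Psi(x)-\bm\Psi(y)}^2=H^2\left[(p-q)^2+2pq(1-c)-\left((p-q)\bmod 2\right)^2\right]\geq0,
\end{align*}
where $c=-a_i^T\bm Ba_j/(\beta_i\beta_j)\le1$ by Cauchy--Schwarz in the inner product given by $\bm B=\bm I-\bm A^T\bm A$. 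If, say, $x$ lies beyond these teeth ($u>2mH/\beta_i$), then $\abs{x-y}\ge u\sin a_0\ge 2mH\sin a_0\ge H/\sqrt\lambda$. This gives $\abs{\bm\Psi(x)-\bm\Psi(y)}^2\le(1-\lambda)\abs{x-y}^2+H^2\le\abs{x-y}^2$, provided every amplitude is at most $H$.

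An integer number of such teeth will not in general fill a side. For that reason the middle part of each side carries teeth of a smaller amplitude $h_k\le H$, chosen to fit that length exactly at unit speed. Finally, the paper imposes $H\le\delta\min_k b_k$, so the radial slope of $\omega$ in the collar is at most $1+3/\sin a_0$. No shape-regularity of the collar triangles is needed, because the tangential slopes of $\omega$ are $0$ on $\partial T$ and $\pm\beta_k$ on $\partial T^\delta$. This is what makes $C_1$ depend on $a_0$ alone.
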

\begin{proof}
    \textbf{Step 1: Construction in the shrunk triangle.} Let $T$ be the triangle defined by points $P_1,P_2,P_3$ and $T^\delta$ be the shrunk triangle defined by points $P_1', P_2', P_3'$. 
    We will make reference to Figure~\ref{fig1} for the nomenclature. Index $i$ takes values in $\{1,2,3\}$ and is meant $\mathrm{mod} \, 3$. Denote the segment $\ell_i:=P_iP_{i+1}$ and $\ell_i':=P_i'P_{i+1}'$.
    
    Let $L_i=\abs{P_{i+1}-P_i}$, from which, obviously, $\abs{P_{i+1}'-P_i'}=(1-\delta)L_i=:L_i'$. Also set $a_i:=\frac{P_{i+1}-P_i}{L_i} = \frac{P_{i+1}'-P_i'}{L_i'}$.

    Consider the restriction of $\bm u$ along the segments $\ell_i'$ given by:
    \begin{align*}
        \bm \gamma_i(t)&=\bm u(P_i')+t\bm \alpha_i\qquad \text{ for }t\in[0,L_i'],
    \end{align*}
    where $\bm\alpha_i= \bm A a_i$. Since $\bm {u}$ is a non-singular strictly short affine map, the image of $T$ under $\bm u$ is contained in a plane with unit normal  $\bm n=\frac{\bm \alpha_1\times\bm \alpha_2}{\abs{\bm \alpha_1\times\bm \alpha_2}}$.  Clearly $\bm n\perp\bm\alpha_i$ and since $\bm {u}$ is short, $\abs{\bm\alpha_i}\leq 1$. 

    Let $\bm B:=\bm I-\bm A^T\bm A$ which is positive definite since $\bm A$ is short. Denote the smallest eigenvalue of $\bm B$ by
    \begin{align*}
        \lambda:=1-\sigma_1^2>0,
    \end{align*}
    where $\sigma_1$ is the largest singular value of $\bm A$. For $k=1,2,3$ denote $\beta_k=\sqrt{1-\abs{\bm\alpha_k}^2}$, which satisfies $\sqrt{\lambda}\leq\beta_k\leq1$. Let $\psi(t):=1-\abs{1-(t\mod 2)}$ be the 2-periodic tent function with $\psi\in[0,1]$, $\abs{\psi'}=1$ a.e., and $\psi(2\ZZ)=0$. Let $3b_k>0$ denote the height of the triangle $T$ with respect to the side $\ell_k$. Set
    \begin{align*}
        m:=\left\lceil\frac{1}{2\sqrt{\lambda}\sin a_0}\right\rceil
    \end{align*}
    and
    \begin{align}\label{eq:choice-of-H}
        0<H\leq\min\left\{\frac{\min_k(\beta_kL_k')}{8m},\delta\min_{k}b_k\right\}.
    \end{align}

    \begin{figure}[!t]
    \centering
    \begin{tikzpicture}[scale=1.2]
% Centroid at the origin; P_1P_2 on x_2 = b, P_3 on x_2 = -2b  (here b = 1)
\coordinate (P) at (-3,1);
\coordinate (Q) at (3,1);
\coordinate (R) at (0,-2);
% Dilate about the centroid, ratio 1-delta = 0.55
\coordinate (PP) at (-1.65,0.55);
\coordinate (QQ) at (1.65,0.55);
\coordinate (RR) at (0,-1.1);
\coordinate (C) at (0,0);
% Level lines (drawn only to the right of the triangle, as leaders to the labels)
\draw[gray!55,dashed] (3,1) -- (4.3,1) node[right,black,scale=0.85] {$x_2=b$};
\draw[gray!55,dashed] (1.65,0.55) -- (4.3,0.55) node[right,black,scale=0.85] {$x_2=(1-\delta)b$};
\draw[gray!55,dashed] (0,-2) -- (4.3,-2) node[right,black,scale=0.85] {$x_2=-2b$};
% Dilation rays
\draw[gray!55,dashed] (C) -- (P);
\draw[gray!55,dashed] (C) -- (Q);
\draw[gray!55,dashed] (C) -- (R);
% Large triangle
\draw[thick] (P) -- (Q) -- (R) -- cycle;
% Smaller triangle
\draw[thick] (PP) -- (QQ) -- (RR) -- cycle;
% Centroid
\fill (C) circle (1.4pt);
\node[below right, inner sep=3pt, scale=0.85] at (C) {$O$};
% Direction arrows
\draw[->, very thick, black] (P) -- ($(P)!1.4cm!(Q)$)
    node[midway, above] {$a_1$};
\draw[->, very thick, black] (Q) -- ($(Q)!1.4cm!(R)$)
    node[midway, right] {$a_2$};
\draw[->, very thick, black] (R) -- ($(R)!1.4cm!(P)$)
    node[midway, left] {$a_3$};
% Vertex labels, outer
\node[above left] at (P) {$P_1$};
\node[above right] at (Q) {$P_2$};
\node[below] at (R) {$P_3$};
% Vertex labels, inner
\node[below left, inner sep=4pt] at (PP) {$P_1'$};
\node[above right, inner sep=2pt] at (QQ) {$P_2'$};
\node[below right, inner sep=2pt] at (RR) {$P_3'$};
% Edge labels, outer
\node[above] at ($(P)!0.5!(Q)$) {$\ell_1$};
\node[xshift=9pt, yshift=-9pt] at ($(Q)!0.55!(R)$) {$\ell_2$};
\node[xshift=-9pt, yshift=-9pt] at ($(R)!0.45!(P)$) {$\ell_3$};
% Edge labels, inner
\node[below, inner sep=4pt] at ($(PP)!0.5!(QQ)$) {$\ell_1'$};
\node[xshift=9pt, yshift=-9pt] at ($(QQ)!0.5!(RR)$) {$\ell_2'$};
\node[xshift=-9pt, yshift=-9pt] at ($(RR)!0.5!(PP)$) {$\ell_3'$};
\end{tikzpicture}
    \caption{Nomenclature used in Lemma \ref{lemma:replacement-inside-triangle}. The domain is a triangle $T=\triangle P_1P_2P_3$ with centroid located at the origin $O$. The triangle $T^\delta=\triangle P_1'P_2'P_3'$ is a $(1-\delta)$-dilated copy of $T$. The edges $P_iP_{i+1}$ and $P_i'P_{i+1}'$ are denoted $\ell_i$ and $\ell_{i}'$ respectively, while the unit vectors along the edges are denoted $a_i$. For the latter part of the proof of Lemma~\ref{lemma:replacement-inside-triangle}, we reorient the triangle so that the edge $P_1P_2$ coincides with $x_2=b$ for some $b$ and consequently, the height of $T$ with respect to the base $P_1P_2$ is $3b$.}
    \label{fig1}
\end{figure}
    
    Let $q_k:=\frac{2mH}{\beta_k}$, then divide each segment $\ell_k'$ into three segments along its length: $[0,q_k]$, $[q_k,L_k'-q_k]$ and $[L_k'-q_k,L_k']$. Since $q_k\leq\frac{L_k'}{4}$, the middle segment has length $r_k:=L_k'-2q_k\geq\frac{L_k'}{2}$. We will introduce out-of-plane oscillations on every side of the triangle in such a way that the restriction to a given side is unit-speed and piecewise affine. We choose the number of teeth and amplitudes to be equal to $m$ and $H$ respectively on the two outer segments of every side. The amplitudes and frequencies on the inner segments are chosen so that the distance shortening property is preserved on the collection of peak/valley points on every edge. The reason for this specific choice is that near a common vertex, points get arbitrarily close and the proof of shortness of the map is simplified by choosing the oscillations to be synchronized. Away from the vertices, the reference distance between edges is bounded below (since the triangle angle is bounded below) so there is more flexibility in choosing the out-of-plane oscillations. Define
    \begin{align*}
        W(t):=\begin{cases}
            H\psi(\beta_kt/H), &0\leq t\leq q_k\\
            h_k\psi(\beta_k(t-q_k)/h_k) &q_k< t\leq L'_k-q_k\\
            H\psi(\beta_k(L_k'-t)/H) &L_k'-q_k<t\leq L_k'
        \end{cases}
    \end{align*}
    and define along side-$k$, $\bm{\Psi}_k(t)=\bm \gamma_k(t)+W(t)\bm n$. Here, we choose an integer $N_k\geq\frac{\beta_kr_k}{2H}$ and set $h_k:=\frac{\beta_k r_k}{2N_k}\leq H$. It is straightforward to see that $\abs{\bm{\Psi}_k'}=1$ a.e. and is a piecewise isometry on each side (refer to Figure~\ref{fig2}). 
    
    Let $\bm{\Psi}$ denote the full boundary map, whose restriction to $\ell_k'$ is $\bm{\Psi}_k$. Define the quantity
    \begin{align*}
        D(x,y):=\abs{x-y}^2-\abs{\bm{\Psi}(x)-\bm{\Psi}(y)}^2,
    \end{align*}
    for $x,y\in\partial T^\delta$. We want to verify the length-shortening property for the peak and valley points on the edges, i.e., $D(x,y)\geq 0$. Since the map restricted to a single edge is length preserving, we only need to verify the condition for points that are on different edges. Let $j=i+1$ and $P'=P'_{j}$ be the common vertex of $\ell_i'$, $\ell_j'$ and write
    \begin{align*}
        x=P'-ua_i\quad y=P'+sa_j,
    \end{align*}
    where $-a_i$ and $a_j$ are unit vectors pointing along $\ell_i'$ and $\ell_j'$ from $P'$, and $u,s\geq0$ are the distances of the two points from $P'$. Let $W_i,W_j$ denote the values of the out-of-plane displacement $W$ along these two segments. Then
    \begin{align*}
        D(x,y)\equiv D(u,s)=(ua_i+sa_j)^T\bm B(ua_i+sa_j)-(W_i-W_j)^2
    \end{align*}
    since $\bm n\perp \bm A(x-y)$. Simplifying, we get
    \begin{align*}
        D(u,s)=u^2\beta_i^2-2us\beta_i\beta_jc+s^2\beta_j^2-(W_i-W_j)^2,
    \end{align*}
    where, by the Cauchy--Schwarz inequality,
    \begin{align*}
        c:=-\frac{a_i^T\bm B a_j}{\beta_i\beta_j}\leq 1.
    \end{align*}

    We now split into two cases:
    \begin{enumerate}
        \item Case 1. $u\leq q_i$ and $s\leq q_j$: All the peaks have the same height. Let $p,q\in\ZZ$ and consider $u_p$, $s_q$ such that
        \begin{align*}
            \frac{u_p\beta_i}{H}=p\quad\text{and}\quad\frac{s_q\beta_j}{H}=q.
        \end{align*}
        Peaks ($\psi=1$) correspond to odd values of $p$ and $q$ while valleys ($\psi=0$) correspond to even values, so that $W_i=H\varepsilon_p$ and $W_j=H\varepsilon_q$ with $\varepsilon_p:=p\bmod2$. Note $(\varepsilon_p-\varepsilon_q)^2=((p-q)\bmod 2)^2$, so
        \begin{align*}
            D(u_p,s_q)&=H^2\left[p^2-2pqc+q^2-(\varepsilon_p-\varepsilon_q)^2\right]\\
            &=H^2\left[(p-q)^2+2pq(1-c)-(\varepsilon_p-\varepsilon_q)^2\right]
        \end{align*}
        For the peak-peak or valley-valley case ($p$, $q$ both even or both odd):
        \begin{align*}
            D(u_p,s_q)=H^2\left[(p-q)^2+2pq(1-c)\right]\geq0,
        \end{align*}
        since $p,q\geq0$ and $c\leq 1$. For the peak-valley case, i.e., when one of $p$ and $q$ is even and the other is odd, so $\abs{p-q}\geq1$, and
        \begin{align*}
            D(u_p,s_q)=H^2\left[(p-q)^2+2pq(1-c)-1\right]\geq0,
        \end{align*}
        since $\abs{p-q}\geq 1$, $p,q\geq0$ and $c\leq1$.

        \item Case 2. Either $u>q_i$ or $s>q_j$: Without loss of generality, assume $u>q_i=\frac{2mH}{\beta_i}\geq2mH$, using $\beta_i\leq1$. Since $x=P'-ua_i\in\ell_i'$  and $y=P'+sa_j\in\ell_j'$, 
        \begin{align}\label{eq:H-sqrt-lambda}
            \abs{x-y}\geq u\sin a_0\geq2mH\sin a_0\geq\frac{H}{\sqrt{\lambda}}.
        \end{align}
        Then computing the distance,
        \begin{align*}
            \abs{\bm{\Psi}(x)-\bm{\Psi}(y)}^2&=\abs{\bm A(x-y)}^2+(W_i-W_j)^2\\
            &\leq (1-\lambda)\abs{x-y}^2+H^2\leq\abs{x-y}^2,
        \end{align*}
        since $W\leq H$ and $H^2\leq\lambda\abs{x-y}^2$ from~\eqref{eq:H-sqrt-lambda}.
    \end{enumerate}
    
    We apply Theorem~\ref{thm:brehm} to the collection of peak and valley points to get a piecewise isometry $\tilde{\bm{\Psi}}$ on $T^\delta$ that agrees with the sample points on the boundary. In particular, $\tilde{\bm{\Psi}}$ agrees with $\bm{\Psi}$ on $\partial T^\delta$ because $\bm\Psi$ is affine and distance preserving between the sample points. Since $\tilde{\bm{\Psi}}$ is a piecewise isometry, we get the desired lower bound on $\abs{\partial_1\tilde{\bm{\Psi}}\times\partial_2\tilde{\bm{\Psi}}}$.

    \textbf{Step 2: Interpolation in the collar region.} In the remaining set $T\setminus T^\delta$, we extend $\bm w$ as a piecewise affine map in the form
    \begin{align*}
        \bm w = \bm u+\omega\bm n,
    \end{align*}
    where $\omega:T\setminus T^\delta\rightarrow\RR$ is continuous and piecewise affine. On any affine piece of $\bm w$, we have $\nabla\bm w = \bm A+\bm n\otimes\nabla\omega$ so 
    \begin{align*}
        \partial_1\bm w\times\partial_2\bm w = \bm Ae_1\times\bm A e_2+\omega_{,2}\bm Ae_1\times\bm n+\omega_{,1}\bm n\times \bm Ae_2.
    \end{align*}
    Since $\bm n$ is orthogonal to the image plane of $T$ under $\bm A$, the first term above is parallel to $\bm n$, while the latter two terms are orthogonal to $\bm n$. Projecting the vector on $\bm n$,
    \begin{align*}
        \abs{\partial_1\bm w\times\partial_2\bm w}\geq\abs{\bm Ae_1\times\bm A e_2}=\abs{\partial_1\bm u\times\partial_2\bm u}\quad\text{on }T\setminus T^\delta.
    \end{align*}
    Also from the same orthogonality, for any vector $v\in\RR^2$, $\abs{\nabla \bm w\ v}^2=\abs{\bm Av}^2+(\nabla\omega\cdot v)^2$, and so using the shortness of $\bm A$,
    \begin{align}\label{eq:w-lipschitz-bound-in-terms-of-omega}
        \abs{\nabla\bm w}\leq\sqrt{\sigma_1^2+\abs{\nabla\omega}^2}\leq\sqrt{1+\abs{\nabla\omega}^2}\text{ on }T\setminus T^\delta.
    \end{align}

\begin{figure}[!t]
    \centering
    \begin{tikzpicture}[xscale=0.76,yscale=1.75]
    \def\Hh{0.9}\def\hk{0.6}\def\q{2.4}\def\Lk{12}\def\Rr{9.6}
     
    % --- amplitude guides, drawn out to the left for the labels
    \draw[gray!55,dashed] (-1.25,\Hh) -- (11.4,\Hh);
    \draw[gray!55,dashed] (-1.25,\hk) -- (9.2,\hk);
    \node[left,inner sep=2pt,scale=0.95] at (-1.2,\Hh) {$H$};
    \node[left,inner sep=2pt,scale=0.95] at (-1.2,\hk) {$h_k$};
     
    % --- region separators
    \draw[gray!55,dashed] (\q,-0.06) -- (\q,1.06);
    \draw[gray!55,dashed] (\Rr,-0.06) -- (\Rr,1.06);
     
    % --- axis and end ticks
    \draw[->] (-0.55,0) -- (13.0,0) node[right,scale=0.95] {$t$};
    \draw (0,-0.045) -- (0,0.045);
    \draw (\Lk,-0.045) -- (\Lk,0.045);
     
    % --- the profile
    \draw[very thick] (0.0,0) -- (0.6,0.9) -- (1.2,0) -- (1.8,0.9) -- (2.4,0) -- (2.8,0.6) -- (3.2,0) -- (3.6,0.6) -- (4.0,0) -- (4.4,0.6) -- (4.8,0) -- (5.2,0.6) -- (5.6,0) -- (6.0,0.6) -- (6.4,0) -- (6.8,0.6) -- (7.2,0) -- (7.6,0.6) -- (8.0,0) -- (8.4,0.6) -- (8.8,0) -- (9.2,0.6) -- (9.6,0) -- (10.2,0.9) -- (10.8,0) -- (11.4,0.9) -- (12.0,0);
    \fill (0.0,0) circle (1.1pt); \fill (0.6,0.9) circle (1.1pt); \fill (1.2,0) circle (1.1pt); \fill (1.8,0.9) circle (1.1pt); \fill (2.4,0) circle (1.1pt); \fill (2.8,0.6) circle (1.1pt); \fill (3.2,0) circle (1.1pt); \fill (3.6,0.6) circle (1.1pt); \fill (4.0,0) circle (1.1pt); \fill (4.4,0.6) circle (1.1pt); \fill (4.8,0) circle (1.1pt); \fill (5.2,0.6) circle (1.1pt); \fill (5.6,0) circle (1.1pt); \fill (6.0,0.6) circle (1.1pt); \fill (6.4,0) circle (1.1pt); \fill (6.8,0.6) circle (1.1pt); \fill (7.2,0) circle (1.1pt); \fill (7.6,0.6) circle (1.1pt); \fill (8.0,0) circle (1.1pt); \fill (8.4,0.6) circle (1.1pt); \fill (8.8,0) circle (1.1pt); \fill (9.2,0.6) circle (1.1pt); \fill (9.6,0) circle (1.1pt); \fill (10.2,0.9) circle (1.1pt); \fill (10.8,0) circle (1.1pt); \fill (11.4,0.9) circle (1.1pt); \fill (12.0,0) circle (1.1pt);
     
    % --- tooth counts above
    \draw[decorate,decoration={calligraphic brace,amplitude=4pt},gray!60!black,thin]
      (0,1.10) -- (\q,1.10);
    \node[above,scale=0.88,inner sep=6pt] at (1.2,1.10) {$m$ teeth};
    \draw[decorate,decoration={calligraphic brace,amplitude=4pt},gray!60!black,thin]
      (\q,1.10) -- (\Rr,1.10);
    \node[above,scale=0.88,inner sep=6pt] at (6,1.10) {$N_k$ teeth};
    \draw[decorate,decoration={calligraphic brace,amplitude=4pt},gray!60!black,thin]
      (\Rr,1.10) -- (\Lk,1.10);
    \node[above,scale=0.88,inner sep=6pt] at (10.8,1.10) {$m$ teeth};
     
    % --- lengths below
    \draw[decorate,decoration={calligraphic brace,amplitude=4pt,mirror},gray!60!black,thin]
      (0,-0.13) -- (\q,-0.13);
    \node[below,scale=0.88,inner sep=6pt] at (1.2,-0.13) {$q_k$};
    \draw[decorate,decoration={calligraphic brace,amplitude=4pt,mirror},gray!60!black,thin]
      (\q,-0.13) -- (\Rr,-0.13);
    \node[below,scale=0.88,inner sep=6pt] at (6,-0.13) {$r_k=L_k'-2q_k$};
    \draw[decorate,decoration={calligraphic brace,amplitude=4pt,mirror},gray!60!black,thin]
      (\Rr,-0.13) -- (\Lk,-0.13);
    \node[below,scale=0.88,inner sep=6pt] at (10.8,-0.13) {$q_k$};
    \draw[decorate,decoration={calligraphic brace,amplitude=4pt,mirror},gray!60!black,thin]
      (0,-0.50) -- (\Lk,-0.50);
    \node[below,scale=0.95,inner sep=6pt] at (6,-0.50) {$L_k'$};
     
    %\node[scale=0.95] at (-1.25,1.05) {$W_k$};
    \end{tikzpicture}
    \caption{A depiction of the zigzag profile $W(t)$ along one of the inner triangle boundaries. On the two outer segments of length $q_k$, there are $m$ teeth and the amplitude is $H$, whereas on the middle segment, the amplitude is $h_k$ and there are $N_k$ teeth.}
    \label{fig2}
    \end{figure}
    
    Therefore, it suffices to construct the scalar $\omega$ with the right boundary conditions and bounds. 

    Focus on a single trapezium $P_1P_2P_2'P_1'$ (the construction works in the same way for the other two trapezia). We reorient coordinates while keeping the centroid of $T$ at the origin so that $\ell_1=P_1P_2$ lies on the line $x_2=b$ with $b=b_1>0$ (where recall $3b_k$ is the height of the triangle with respect to the base $\ell_k$) and $a_1=e_1$, and write $P_1=(\xi,b)$. Then $\ell_1'=P_1'P_2'$ lies on the line $x_2=(1-\delta)b$ and $P_i'=(1-\delta)P_i$. Since the centroid is at the origin, $P_3$ lies on $x_2=-2b$, so the height of $T$ over the base $\ell_1$ is $3b$ (refer to Figure~\ref{fig1}). Recall that we denote the length of segment $\ell_k$ by $L_k$. We then have
\begin{align}\label{eq:bound-on-distance-of-P-from-origin}
        \frac{P_1+P_2+P_3}{3}=0\implies\abs{P_1}\leq\frac{L_1+L_3}{3}.
    \end{align}
    Since the interior angles are bounded below by $a_0$, the lengths of the sides $\ell_3$ and $\ell_2$ can be estimated as
    \begin{align*}
        L_2,L_3\leq\frac{3b}{\sin a_0},
    \end{align*}
    and from the triangle inequality, $L_1\leq\frac{6b}{\sin a_0}$. Putting this together with~\eqref{eq:bound-on-distance-of-P-from-origin}, we get
    \begin{align*}
        \abs{P_1}\leq\frac{3b}{\sin a_0}.
    \end{align*}
    We also get the same bounds for $\abs{P_2}$, and thus for any point $x\in \ell_1$, we get 
    \begin{align}\label{eq:x_1-coordinate-bound}
        \abs{x_1}\leq\frac{3b}{\sin a_0}.
    \end{align}
    Let $0=t_0<t_1<...<t_N=L_1'$ be the peak/valley parameters of $W_1$ defined on $\ell_1'$ and set
    \begin{align*}
        \bar{W}_j:=W_1(t_j)\in[0,H],\quad \Delta t_j:=t_{j+1}-t_j.
    \end{align*}
    Define the inner and outer nodes
    \begin{align*}
        S_j:=P_1'+t_ja_1\in\ell_1'\qquad R_j:=\frac{1}{1-\delta}S_j=P_1+\frac{t_j}{1-\delta}a_1\in\ell_1,
    \end{align*}
    so that $R_0=P_1$, $R_N=P_2$, $S_0=P_1'$ and $S_N=P_2'$. In coordinates, write
    \begin{align*}
        R_j=(\xi_j,b),\quad S_j=(1-\delta)(\xi_j,b),\quad\xi_j:=\xi+\frac{t_j}{1-\delta}.
    \end{align*}
    Since $R_j\in P_1P_2$, the bound~\eqref{eq:x_1-coordinate-bound} applies and
    \begin{align*}
        \abs{\xi_j}\leq\frac{3b}{\sin a_0},\quad j=0,...,N.
    \end{align*}
    \begin{figure}[!t]
    \centering
        \begin{tikzpicture}[scale=2.5]
        % Centroid at the origin.  Outer edge on x_2 = b (b = 2.4), inner on
        % x_2 = (1-delta)b with 1-delta = 7/12, so S_j = (1-delta) R_j exactly.
        \def\yo{2.4}\def\yi{1.4}\def\rat{0.58333}
        \coordinate (C) at (0,0);
         
        % ---- shading: A_j grey, B_j white ---------------------------------------
        \fill[gray!22] (-1.8,\yo) -- (-0.9,\yo) -- (-1.05,\yi) -- cycle;   % A_0
        \fill[gray!22] (-0.9,\yo) -- (0,\yo) -- (-0.525,\yi) -- cycle;     % A_1
        \fill[gray!22] (0,\yo) -- (0.9,\yo) -- (0,\yi) -- cycle;           % A_2
        \fill[gray!22] (0.9,\yo) -- (1.8,\yo) -- (0.525,\yi) -- cycle;     % A_3
         
        % ---- dilation rays -------------------------------------------------------

        % ---- mesh edges ----------------------------------------------------------
        \foreach \x in {-1.8,-0.9,0,0.9,1.8}{
          \draw[gray!80] (\x,\yo) -- ({\rat*\x},\yi);}
        \draw[gray!80] (-1.05,\yi) -- (-0.9,\yo);
        \draw[gray!80] (-0.525,\yi) -- (0,\yo);
        \draw[gray!80] (0,\yi) -- (0.9,\yo);
        \draw[gray!80] (0.525,\yi) -- (1.8,\yo);
         
        % ---- boundary edges ------------------------------------------------------
        \draw[very thick] (-1.8,\yo) -- (1.8,\yo);
        \draw[very thick] (-1.05,\yi) -- (1.05,\yi);
        \draw[gray!55,dashed] (1.8,\yo) -- (2.6,\yo) node[right,black,scale=0.72] {$x_2=b$};
        \draw[gray!55,dashed] (1.05,\yi) -- (2.6,\yi) node[right,black,scale=0.72] {$x_2=(1-\delta)b$};
         
        % ---- nodes ---------------------------------------------------------------
        \foreach \x in {-1.8,-0.9,0,0.9,1.8}{\fill (\x,\yo) circle (1.2pt);}
        \foreach \x in {-1.05,-0.525,0,0.525,1.05}{\fill (\x,\yi) circle (1.2pt);}
         
        \node[above,inner sep=2.5pt,scale=0.8] at (-1.8,\yo) {$P_1\!=\!R_0$};
        \node[below,inner sep=3pt,scale=0.8] at (-0.525,\yi) {$S_1$};
        \node[below,inner sep=3pt,scale=0.8] at (0,\yi) {$S_2$};
        \node[below,inner sep=3pt,scale=0.8] at (0.525,\yi) {$S_3$};
         
        \node[above,inner sep=3pt,scale=0.8] at (-0.9,\yo) {$R_1$};
         
        \node[above,inner sep=3pt,scale=0.8] at (0,\yo) {$R_2$};
        \node[above,inner sep=3pt,scale=0.8] at (0.9,\yo) {$R_3$};
        \node[above,inner sep=2.5pt,scale=0.8] at (1.8,\yo) {$R_N\!=\!P_2$};
        \node[below,inner sep=3pt,scale=0.8] at (-1.05,\yi) {$S_0\!=\!P_1'$};
        \node[below,inner sep=3pt,scale=0.8] at (1.05,\yi) {$S_N\!=\!P_2'$};
         
        % ---- triangle labels -----------------------------------------------------
        \node[scale=0.75] at (-1.30,2.15) {$A_0$};
        \node[scale=0.75] at (-0.44,2.15) {$A_1$};
        \node[scale=0.75] at (0.44,2.15)  {$A_2$};
        \node[scale=0.75] at (1.30,2.15)  {$A_3$};
        \node[scale=0.75] at (-0.85,1.60) {$B_0$};
        \node[scale=0.75] at (-0.24,1.60) {$B_1$};
        \node[scale=0.75] at (0.31,1.60)  {$B_2$};
        \node[scale=0.75] at (0.90,1.60)  {$B_3$};

        \end{tikzpicture}
        \caption{Triangulation of the trapezium $P_1P_2P_2'P_1'$. We have chosen coordinates so that $\ell_1=P_1P_2$ lies along $x_2=b$ and $\ell_1'=P_1'P_2'$ lies along $x_2=(1-\delta)b$. The inner nodes, $S_0,...,S_N$ lie on the peak-valley points of the zigzag function constructed on the inner boundary $\partial T^\delta$, while the outer points $R_0,...,R_N$ are the corresponding dilates by the factor $\frac{1}{1-\delta}$ which lie on the outer boundary $\partial T$.}
        \label{fig3}
    \end{figure}
    We now construct $\omega$ as a piecewise affine function on a triangulation of $T\setminus T^\delta$: For $j=0,...,N-1$, the quadrilateral $R_jR_{j+1}S_{j+1}S_j$ is a trapezium with parallel sides lying on the lines $x_2=b$ and $x_2=(1-\delta)b$. Split this trapezium into two triangles (refer to Figure~\ref{fig3}):
    \begin{align*}
        A_j:=\triangle R_jR_{j+1}S_j\qquad B_j:=\triangle S_jS_{j+1}R_{j+1}.
    \end{align*}
    Performing the same construction on the other two trapezia, we get a triangulation of $T\setminus T^\delta$. Define $\omega$ as the unique continuous function that is affine on each triangle $A_j$, $B_j$ and satisfies
    \begin{align*}
        \omega(R_j)=0\qquad\omega(S_j)=\bar{W}_j.
    \end{align*}
    Since $R_jR_{j+1}\subset\partial T$ and $\omega(R_j)=\omega(R_{j+1})=0$, $\omega=0$ on $\partial T$ and therefore, $\bm w =\bm u$ on $\partial T$. On $S_jS_{j+1}\subset \partial T^\delta$, $\omega$ is affine with endpoint values $\bar{W}_j$ and $\bar{W}_{j+1}$, hence $\omega=W_1$ on $\ell_1'$ (and consequently the entire inner boundary $\partial T^\delta$). Thus $\bm w=\bm u+W_1\bm n=\tilde{\bm{\Psi}}$ on $\partial T^\delta$.

    We can now estimate the derivative of $\omega$ and its Jacobian. On $A_j$: since $R_{j+1}-R_{j}=(\xi_{j+1}-\xi_{j})e_1$ with $\omega(R_j)=\omega(R_{j+1})=0$, we have $\partial_1\omega = 0$. On the other hand, $R_j-S_j=\delta(\xi_j,b)$ and $\omega(R_j)-\omega(S_j)=-\bar{W}_j$, therefore $\partial_2\omega=\frac{-\bar{W}_j}{\delta b}$. Putting this together,
    \begin{align}\label{eq:grad-omega-A-formula}
        \nabla\omega|_{A_j}=\left(0,-\frac{\bar{W}_j}{\delta b}\right)
    \end{align}

    On $B_j$: $S_{j+1}-S_j=(1-\delta)(\xi_{j+1}-\xi_j)e_1=\Delta t_je_1$ and $\omega(S_{j+1})-\omega(S_{j})=\bar{W}_{j+1}-\bar{W}_{j}$, therefore, $\partial_1\omega=\pm\beta_1$ (slope of the zigzag function $W(t)$). In the remaining direction, $R_{j+1}-S_{j+1}=\delta(\xi_{j+1},b)$ and $\omega(R_{j+1})-\omega(S_{j+1})=-\bar{W}_{j+1}$. Then using the directional derivative formula
    \begin{align*}
        \nabla\omega|_{B_j}\cdot\delta(\xi_{j+1},b)=\delta\partial_1\omega\xi_{j+1}+\delta\partial_2\omega b=-\bar{W}_{j+1}.
    \end{align*}
    Solving for $\partial_2\omega$, we get $\partial_2\omega=-\frac{\bar{W}_{j+1}}{\delta b}\mp\frac{\beta_1\xi_{j+1}}{b}$, thus 
    \begin{align}\label{eq:grad-omega-B-formula}
        \nabla\omega|_{B_j}=\left(\pm\beta_1,-\frac{\bar{W}_{j+1}}{\delta b}\mp\frac{\beta_1\xi_{j+1}}{b}\right).
    \end{align}
    Using $0\leq \bar{W}_j\leq H$, $\beta_1\leq 1$,~\eqref{eq:x_1-coordinate-bound},~\eqref{eq:grad-omega-A-formula} and~\eqref{eq:grad-omega-B-formula}, we get
    \begin{align*}
        \abs{\partial_1\omega}\leq 1,\qquad\abs{\partial_2\omega}\leq\frac{H}{\delta b}+\frac{3}{\sin a_0}.
    \end{align*}
    From the choice of $H$ in~\eqref{eq:choice-of-H}, we have $\frac{H}{\delta b}\leq 1$,
    \begin{align*}
        \abs{\nabla\omega}\leq\sqrt{1+\left(1+\frac{3}{\sin a_0}\right)^2}\qquad T\setminus T^\delta.
    \end{align*}
    Combining with~\eqref{eq:w-lipschitz-bound-in-terms-of-omega}, 
    \begin{align*}
        \mathrm{Lip}(\bm w)\leq\sqrt{2+\left(1+\frac{3}{\sin a_0}\right)^2}=:C_1(a_0).
    \end{align*}
\end{proof}

\subsection{Piecewise affine generation}

A key step in the proof of our Young measure generation theorem is the approximation of generating maps by immersions. The following result of Gromov and Eliashberg~\cite{gromov_construction_1971} says that immersions are $W^{1,p}$-dense in the class of $C^1$ maps from $\RR^2$ into $\RR^3$. We state the version of this result presented in~\cite[Theorem A.4]{anza_nonlinear_2008}:
\begin{theorem}[Gromov-Eliashberg]\label{thm:gromov-eliashberg}
    For $\bm v\in C^1(\bar{\Omega},\RR^3)$, there exists $\{\bm v_n\}_{n=1}^\infty\subset C^1(\bar{\Omega},\RR^3)$ satisfying $\abs{\partial_1\bm v_n\times\partial_2\bm v_n}\neq 0$ for all $x\in\bar\Omega$ and $\bm v_n\rightarrow \bm v$ in $W^{1,p}(\Omega, \bbR^3)$. 
\end{theorem}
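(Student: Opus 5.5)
The plan is to prove the density by a genericity argument followed by local surgery. The surgery is justified by the fact that we only need closeness in $W^{1,p}$, not in $C^1$.

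\textbf{Step 1: make the singular set finite.} First I reduce to $\bm v\in C^\infty(\bar\Omega,\RR^3)$. I would extend $\bm v$ to a neighbourhood of $\bar\Omega$ and mollify, which gives convergence in $C^1(\bar\Omega)$. Next I look at the set $\Sigma\subset\RR^{3\times2}$ of matrices of rank at most one. It is a stratified algebraic set of codimension $(3-1)(2-1)=2$. By the Thom jet transversality theorem, applied to perturbations $\bm v+\bm L$ with $\bm L$ a small polynomial map, I can arrange that $\nabla(\bm v+\bm L)$ is transverse to $\Sigma$. The perturbation can be taken as small as we like in $C^1(\bar\Omega)$. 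Transversality, together with a slight shrinking of the domain handled by the extension, gives:
\begin{enumerate}
\item[(a)] the set $Z=\{x\in\bar\Omega:\partial_1\bm v\times\partial_2\bm v=0\}$ is a finite set of points, none of them on $\partial\Omega$;
\item[(b)] near each $x_0\in Z$ the matrix $\nabla\bm v$ has rank exactly one, so the singularity is of Whitney-umbrella type.
\end{enumerate}
These points are stable under $C^1$-small perturbations, so $C^1$ approximation alone cannot remove them. This is why the statement is phrased in $W^{1,p}$.

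\textbf{Step 2: surgery on small disks.} Fix $x_0\in Z$ and a small radius $r>0$. I look for $\bm v_r\in C^1$ with the following properties:
\begin{enumerate}
\item[(i)] $\bm v_r=\bm v$ outside $B_{2r}(x_0)$;
\item[(ii)] $\bm v_r$ is an immersion on $B_{2r}(x_0)$;
\item[(iii)] $\|\nabla\bm v_r\|_{L^\infty(B_{2r})}\le C$, with $C$ independent of $r$.
\end{enumerate}
If such maps exist, then
\begin{align*}
\|\bm v_r-\bm v\|_{W^{1,p}}^p\le C\,r^2\to0,
\end{align*}
and doing this at each of the finitely many points of $Z$ gives the sequence $\bm v_n$.

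The natural construction rescales $y=(x-x_0)/r$. In these coordinates the map becomes, up to an affine term, a fixed normal form $\bm U$ on $B_2$, namely the umbrella $(y_1,y_1y_2,y_2^2)$ plus a rank-one linear part. The problem then reduces to a single model question: find a $C^1$ immersion on $B_2$ that agrees with $\bm U$ near $\partial B_2$ and has bounded gradient. Scaling back multiplies the gradient bound by $1$, so (iii) holds uniformly in $r$.

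\textbf{Step 3: the model problem, which is the main obstacle.} By Smale--Hirsch, the restriction of $\bm U$ to an annulus extends to an immersion of the disk if and only if the frame loop $\theta\mapsto\nabla\bm U(\theta)\in V_2(\RR^3)\simeq SO(3)$ is null-homotopic. Since $\pi_1(SO(3))=\ZZ/2$, I expect the umbrella loop to be exactly the nontrivial class, and then the extension is obstructed.

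To get around this, I would first modify $\bm U$ on the annulus $B_2\setminus B_1$ by a $C^1$-bounded immersion that inserts one full twist of the frame. Concretely, I would insert a small ``curl'', meaning a regular-homotopy loop of the boundary curve, which changes the frame class by the generator. This modification must not change the values near $\partial B_2$. After it, the loop on $\partial B_1$ is trivial, and Smale--Hirsch, or an explicit interpolation, fills $B_1$ by an immersion.

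If the twist cannot be inserted while keeping the boundary values fixed, the fallback is to pair up singular points. Generic umbrella points come with signs, and one can add a cancelling umbrella point nearby by a local $W^{1,p}$-small modification. Each pair can then be removed by surgery inside a small disk containing both points, where the total obstruction is trivial.

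I expect the verification of the frame-class bookkeeping, with uniform gradient bounds, to be the delicate part. Steps 1 and 2 are routine.
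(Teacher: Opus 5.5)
\textbf{There is a genuine gap, in Step~3, and it breaks the whole local-surgery scheme of Step~2.}

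Your expectation about the umbrella is correct. On the unit circle, the homotopy $\bigl((1,(1-t)\sin\theta,0),(0,\cos\theta,2\sin\theta)\bigr)$, $t\in[0,1]$, keeps the columns independent. It reduces the frame loop of $(y_1,y_1y_2,y_2^2)$ to a full rotation about a fixed axis, which is the generator of $\pi_1(V_2(\RR^3))\cong\pi_1(SO(3))\cong\ZZ/2$. Neither of your remedies gets around this.

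\emph{The twist on the annulus.} If $\bm v_r$ is an immersion on the closed annulus $\bar B_2\setminus B_1$, then $\nabla\bm v_r$ on the annulus, followed by Gram--Schmidt, is a free homotopy between the frame loops on $\partial B_2$ and $\partial B_1$. So no immersed modification of the annulus, with or without a curl, can change the class. Adding curls changes the index of plane curves, but that does not carry over to the $2$-frame of an immersed annulus.

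\emph{The cancelling umbrella.} By the same argument, a modification supported in a disk $D$ and agreeing with $\bm v$ near $\partial D$ cannot change the class of the frame loop on $\partial D$. For generic maps this class is the parity of the number of umbrella points inside $D$. Hence a single umbrella can never be removed, and a single cancelling umbrella can never be created, by a change supported in $B_{2r}(x_0)$, whatever gradient bounds you impose. The model problem of Step~2 has no solution, so the estimate $\norm{\bm v_r-\bm v}_{W^{1,p}}^p\le Cr^2$ never applies.

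\emph{Pairing.} Pairing two existing umbrellas does not rescue the scheme either. A disk containing both has area of order the square of their distance, which does not shrink.

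\textbf{What is missing is a non-local mechanism.} The $\ZZ/2$ charges must be cancelled in pairs, or removed through $\partial\Omega$, where the boundary values are free. For an odd number of umbrellas, for instance $\bm v=\bm U$ on a disk, the boundary has to be used. Either way, $\bm v$ must be modified in a thin neighbourhood of an arc of \emph{fixed} length $L$, joining an umbrella to another umbrella or to $\partial\Omega$.

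Your curl is a workable device, but it has to be carried along such an arc. In a strip of width $r$ around the arc, replace each cross-sectional curve by one with a single small loop of size $\sim r$, lying in the plane normal to the image of the arc direction. Then:
\begin{itemize}
\item the frame rotates once about that direction across every cross-section;
\item the gradient stays bounded;
\item the $W^{1,p}$ cost is of order $(rL)^{1/p}$.
\end{itemize}
You would still have to build the transition at the umbrella, which is an extension problem on a small disk whose boundary class is now trivial. You would also need the termination at the partner umbrella or at $\partial\Omega$. None of this follows from rescaling a fixed model, since $L$ does not shrink with $r$. This is the real content of the Gromov--Eliashberg theorem, which the paper does not prove but imports from \cite[Theorem A.4]{anza_nonlinear_2008}.

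\textbf{A smaller point.} The isotropic rescaling $y=(x-x_0)/r$ gives $\nabla\bm v(x_0)y+rQ(y)+\dots$, not a fixed normal form. You need the anisotropic target scaling $(z_1,z_2,z_3)\mapsto(z_1/r,z_2/r^2,z_3/r^2)$, which matches the quasi-homogeneity of the umbrella.
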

Since we will subsequently approximate immersions by piecewise affine maps, we need the following smooth version of the above result:
\begin{corollary}\label{cor:gromov-eliashberg-with-smooth}
    Let $1<p<\infty$ and $U\subset\RR^2$ be a bounded Lipschitz domain. For every $\bm v\in C^1(\bar{U},\RR^3)$, there exists $\{\bm v_n\}\subset C^\infty(\bar{U},\RR^3)$ with $\bm v_n\rightarrow\bm v$ in $W^{1,p}(U,\RR^3)$ and $\abs{\partial_1\bm v_n\times\partial_2\bm v_n}\neq0$ for all $x\in\bar{U}$.
\end{corollary}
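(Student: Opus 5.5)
The plan is to upgrade the $C^1$ approximants of Theorem~\ref{thm:gromov-eliashberg} to smooth ones by mollification. The key point is that the immersion condition is open in the $C^1$ topology on the compact set $\bar U$.

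Fix $\bm v\in C^1(\bar U,\RR^3)$ and take the sequence $\{\bm w_n\}\subset C^1(\bar U,\RR^3)$ given by Theorem~\ref{thm:gromov-eliashberg}. It satisfies $\bm w_n\to\bm v$ in $W^{1,p}(U)$ and $\abs{\partial_1\bm w_n\times\partial_2\bm w_n}\neq0$ on $\bar U$. Since $\bar U$ is compact and $x\mapsto\abs{\partial_1\bm w_n\times\partial_2\bm w_n}$ is continuous, there is a constant $c_n>0$ with
\begin{align*}
    \abs{\partial_1\bm w_n\times\partial_2\bm w_n}\geq c_n\quad\text{on }\bar U.
\end{align*}
For each fixed $n$, it therefore suffices to find $\bm v_n\in C^\infty(\bar U,\RR^3)$ satisfying two conditions: first, $\norm{\bm v_n-\bm w_n}_{C^1(\bar U)}$ is small enough that the cross product stays above $c_n/2$; second, $\norm{\bm v_n-\bm w_n}_{W^{1,p}(U)}\leq 1/n$. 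The diagonal sequence then converges to $\bm v$ in $W^{1,p}$ and consists of smooth immersions.

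To produce $\bm v_n$, I would extend $\bm w_n$ to a $C^1$ map on all of $\RR^2$ with compact support. Because $U$ is Lipschitz, hence a $C^1$-extension domain, one can use a Whitney-type extension for $C^1$ functions on closed sets. Alternatively, extend to a $C^1$ map on a neighbourhood of $\bar U$, which is all that mollification needs. I would then mollify, $\bm w_n*\rho_\eta$. Since $\nabla\bm w_n$ is uniformly continuous on a compact neighbourhood of $\bar U$, $\bm w_n*\rho_\eta\to\bm w_n$ in $C^1(\bar U)$ as $\eta\to0$. Uniform convergence on the bounded set $U$ gives $W^{1,p}$ convergence as well. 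Finally, the map $(\bm a,\bm b)\mapsto\bm a\times\bm b$ is locally Lipschitz and the gradients are uniformly bounded, so the uniform convergence of gradients yields $\abs{\partial_1(\bm w_n*\rho_\eta)\times\partial_2(\bm w_n*\rho_\eta)}\geq c_n/2$ for $\eta$ small.

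The only step requiring care is the extension of $\bm w_n$ beyond $\bar U$ in $C^1$, so that mollification makes sense up to the boundary. If the $C^1$ extension is inconvenient, I would instead use the fact that Theorem~\ref{thm:gromov-eliashberg} can be applied on a slightly larger Lipschitz domain $U'\Supset\bar U$ after first extending $\bm v$ itself. Extending $\bm v$ is the same issue but occurs only once, and a $C^1$ extension across a Lipschitz boundary is standard via Whitney's extension theorem, since $\nabla\bm v$ is uniformly continuous on $\bar U$. Everything else is routine.
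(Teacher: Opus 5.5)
Your proposal is correct and follows the paper's proof essentially step for step. You take a $C^1$ immersion from Theorem~\ref{thm:gromov-eliashberg} that is $W^{1,p}$-close to $\bm v$, use compactness of $\bar U$ to get a positive lower bound on $\abs{\partial_1\bm z\times\partial_2\bm z}$, extend to a compactly supported $C^1$ map on $\RR^2$ using that $U$ is Lipschitz, and mollify so that $C^1$-convergence keeps the cross product above half that bound while giving $W^{1,p}$-closeness. Your appeal to Whitney's extension theorem just spells out the extension step that the paper asserts directly.
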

\begin{proof}
    Fix $n$. By Theorem~\ref{thm:gromov-eliashberg}, there exists $\bm z\in C^1(\bar{U},\RR^3)$ with $\abs{\partial_1\bm z\times\partial_2\bm z}\neq0$ on $\bar{U}$ and $\norm{\bm z-\bm v}_{W^{1,p}(U)}<\frac{1}{2n}$. Since $\abs{\partial_1\bm z\times\partial_2\bm z}$ is continuous and strictly positive on $\bar{U}$, we have from the extreme value theorem
    \begin{align*}
        \alpha:=\min_{\bar{U}}\abs{\partial_1\bm z\times\partial_2\bm z}>0.
    \end{align*}
    As $U$ is Lipschitz, we may extend $\bm z$ to $\bar{\bm z}\in C^1_c(\RR^2,\RR^3)$. Denote the standard mollification kernel by $\rho_\varepsilon$. From uniform continuity, $\bar{\bm z}*\rho_{\varepsilon}\rightarrow\bar{\bm z}$ in $C^1(\RR^2,\RR^3)$ as $\varepsilon\rightarrow0$. Take $\varepsilon_n$ small enough so that $\bm v_n:=(\bar{\bm z}*\rho_{\varepsilon_n})|_{\bar{U}}\in C^\infty(\bar{U},\RR^3)$; hence
    \begin{align*}
        \abs{\abs{\partial_1\bm v_n\times\partial_2\bm v_n}-\abs{\partial_1\bm z\times\partial_2\bm z}}<\frac{\alpha}{2}\text{ on }\bar{U},\quad\norm{\bm v_n-\bm z}_{W^{1,p}(U)}<\frac{1}{2n}.
    \end{align*}
    Together this gives $\abs{\partial_1\bm v_n\times\partial_2\bm v_n}\geq\frac{\alpha}{2}>0$ and $\norm{\bm v_n-\bm v}_{W^{1,p}(U)}<\frac{1}{n}$.
\end{proof}
Next we show that every $p$-gradient Young measure can be generated by a $p$-equiintegrable sequence of piecewise affine immersions. First we define the relevant class of piecewise affine maps defined on regular triangulations of the plane:
\begin{definition}
    We denote by $\mathcal{T}_h$ the equilateral triangular mesh of $\RR^2$ with side length $h$. For a domain $\Omega\subset\RR^2$, we define the following set of piecewise affine maps
    \begin{align*}
        P_c(\Omega,\RR^3):=\{\bm v\in C(\Omega,\RR^3):\bm v|_{T\cap\Omega}\text{ is affine for each }T\in\mathcal{T}_h\text{ for some }h>0\}.
    \end{align*}
\end{definition}

\begin{proposition}\label{prop:Young-measures-generate-by-PC}
    Let \(1<p<\infty\) and let \(\Omega\subset\mathbb R^2\) be a bounded Lipschitz domain. For every  $\mu\in\mathcal{GY}^p(\Omega,\RR^{3\times2})$ there exists a sequence $\{\bm {v}^k\}\subset P_c(\Omega,\RR^3)$ such that $\{\nabla \bm {v}^k\}$ is $p$-equiintegrable, \(\nabla \bm {v}^k\) generates \(\mu\), and $ J(\nabla \bm {v}^k)>0$ a.e. in $\Omega$.
\end{proposition}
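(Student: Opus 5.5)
We proceed in three successive approximation steps, each preserving Young measure generation (via Lemma~\ref{lemma:strong-approximation-YM-and-equiintegrability}(i)) and $p$-equiintegrability (via Lemma~\ref{lemma:strong-approximation-YM-and-equiintegrability}(ii)), and conclude with a diagonal argument.

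\textbf{Step 1: equiintegrable smooth generators.} By the decomposition lemma of Fonseca--M\"uller--Pedregal, $\mu$ is generated by a $p$-equiintegrable sequence $\{\nabla\bm u^k\}$ with $\bm u^k\weakarrow\bm u$ in $W^{1,p}$. Extend each $\bm u^k$ to a neighbourhood of $\bar\Omega$ using the Lipschitz property of $\Omega$, and mollify. For each $k$, choose $\bm z^k\in C^\infty(\bar\Omega,\RR^3)$ with $\norm{\bm z^k-\bm u^k}_{W^{1,p}(\Omega)}<1/k$. Part (ii) of the stability lemma keeps equiintegrability, and part (i) keeps generation, since $L^p$-convergence of the difference implies convergence in measure.

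\textbf{Step 2: immersions.} Apply Corollary~\ref{cor:gromov-eliashberg-with-smooth} to each $\bm z^k$ to obtain $\bm y^k\in C^\infty(\bar\Omega,\RR^3)$ with $\abs{\partial_1\bm y^k\times\partial_2\bm y^k}\geq\alpha_k>0$ on $\bar\Omega$ and $\norm{\bm y^k-\bm z^k}_{W^{1,p}}<1/k$. Again both properties pass to $\{\nabla\bm y^k\}$. To carry out the next step we need $\bm y^k$ defined on a neighbourhood of $\bar\Omega$, with a positive Jacobian there. We get this by applying the corollary on a slightly larger Lipschitz domain $U\Supset\Omega$ to an extension of $\bm z^k$; the estimates are only needed on $\Omega$.

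\textbf{Step 3: piecewise affine interpolation.} For fixed $k$, let $\bm v^k$ be the continuous nodal Lagrange interpolant of $\bm y^k$ on $\mathcal{T}_{h_k}$, restricted to $\Omega$. Since $\bm y^k$ is $C^2$ on $\bar U$, standard finite element estimates give $\norm{\nabla\bm v^k-\nabla\bm y^k}_{L^\infty(\Omega)}\leq C h_k\norm{\nabla^2\bm y^k}_{L^\infty(U)}$ on all mesh triangles meeting $\Omega$, once $h_k$ is small enough that these triangles lie in $U$. Because $\bm A\mapsto\abs{\bm Ae_1\times\bm Ae_2}$ is locally Lipschitz and $\abs{\nabla\bm y^k}$ is bounded on $\bar U$, we can choose $h_k$ so that two things hold: the uniform error is $\leq\min\{1/k,\ \eta_k\}$, where $\eta_k$ guarantees $J(\nabla\bm v^k)\geq\alpha_k/2>0$ everywhere; and $\norm{\nabla\bm v^k-\nabla\bm y^k}_{L^p(\Omega)}<1/k$. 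Then $\bm v^k\in P_c(\Omega,\RR^3)$, and the stability lemma once more transfers $p$-equiintegrability and generation of $\mu$ to $\{\nabla\bm v^k\}$. Norm-boundedness in $L^p$ holds throughout by equiintegrability.

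\textbf{Main obstacle.} The delicate point is Step~2 together with the boundary handling in Step~3. Gromov--Eliashberg only gives $W^{1,p}$-closeness, so the Jacobian lower bound $\alpha_k$ may degenerate as $k\to\infty$. This is acceptable here, since the proposition asks only for $J>0$ a.e., not uniform control. The quantitative $J$ control is what Lemma~\ref{lemma:replacement-inside-triangle} is for, later. The real care is to fix the order of choices for each $k$: first the extension domain $U$, then $\bm y^k$ on $U$, then $h_k$ depending on $\alpha_k$ and $\norm{\nabla^2\bm y^k}_{L^\infty(U)}$. With that order, the interpolant on triangles crossing $\partial\Omega$ stays nondegenerate.
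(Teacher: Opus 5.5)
Your proposal is correct and follows the paper's route: smooth $p$-equiintegrable generators, then immersions via Corollary~\ref{cor:gromov-eliashberg-with-smooth}, then nodal interpolation with $h_k$ chosen after $\alpha_k$ and $\norm{\nabla^2\bm y^k}_{L^\infty}$, with generation and equiintegrability transferred at each step by Lemma~\ref{lemma:strong-approximation-YM-and-equiintegrability}. The only difference is in the boundary handling: the paper first arranges $\bm u^k-\bm u\in W^{1,p}_0$ and extends the whole sequence, so that equiintegrability holds on an enlarged domain $\Omega_\delta$; you instead approximate on $\Omega$ and extend only the smooth approximants, which suffices because all the required estimates live on $\Omega$.
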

\begin{proof}
    This proof will be carried out using several levels of approximation.

    Let $\{\bm u^k\}\subset W^{1,p}(\Omega, \bbR^3)$ be a norm-bounded sequence that generates $\mu$ and such that $\{\nabla \bm u^k\}$ is $p$-equiintegrable. Assume $\bm u^k \weak \bm u$ for $\bm u \in W^{1,p}(\Omega, \bbR^3)$. From~\cite[Lemma 8.3]{pedregal_parametrized_1997}, we can assume that $\bm u^k-\bm u\in W^{1,p}_0$ and denote the extension by zero to all of $\RR^2$ by $\widetilde{\bm u^k-\bm u}$. Let $E:W^{1,p}(\Omega,\RR^3)\rightarrow W^{1,p}(\RR^2,\RR^3)$ be the Sobolev extension operator, then set 
    \begin{align*}
        \bar{\bm{u}}^k:=\widetilde{\bm u^k-\bm u}+E\bm u\in W^{1,p}(\RR^2,\RR^3).
    \end{align*}
    Then the sequence $\{\nabla\bar{\bm u}^k\}$ is $p$-equiintegrable on $\RR^2$ since it is the sum of an equiintegrable sequence and a fixed $W^{1,p}$ function. 
    
    Convolving with a standard mollification kernel $\rho_\e$, we get $\bar{\bm u}^k_\e=\bar{\bm u}^k\ast\rho_\e\in C^\infty(\RR^2,\RR^3)$. Note that
    \begin{align}\label{eq:mollification-L^p-norm}
        \abs{\nabla\bar{\bm u}^k_\e}^p=\abs{\nabla\bar{\bm u}^k\ast\rho_\e}^p\leq\abs{\nabla\bar{\bm u}^k}^p\ast\rho_\e.
    \end{align}
    For $\delta>0$, let $\Omega_\delta\supset\Omega$ so that $\dist(\partial\Omega_\delta,\Omega)\geq\delta$. Taking a diagonal sequence and restricting to $\bar\Omega_\delta$, posing $\bm w^k:=\bar{\bm u}^k_{\e_k}|_{\bar\Omega_\delta}$, we get the sequence $\{\bm w^k\}\subset C^\infty(\bar{\Omega}_\delta,\RR^3)$, with $\{\nabla\bm w^k\}$ which, by virtue of Lemma~\ref{lemma:strong-approximation-YM-and-equiintegrability}, is $p$-equiintegrable on $\Omega_\delta$ and generates the gradient Young measure $\hat{\mu}\in\mathcal{GY}^p(\Omega_\delta,\RR^{3\times2})$ whose restriction $\hat{\mu}|_{\Omega}=\mu$.

    Next, we use Corollary~\ref{cor:gromov-eliashberg-with-smooth} to approximate each $\bm w^k$ strongly in $W^{1,p}(\Omega_\delta,\RR^3)$ by a sequence $\tilde{\bm w}^k_i\in C^\infty(\bar{\Omega}_\delta,\RR^3)$ satisfying $J(\nabla\tilde{\bm w}^k_i)\neq0$, and from a further diagonalization argument, we obtain a sequence $\{\tilde{\bm w}^k\}$ satisfying $J(\nabla\tilde{\bm w}^k)\neq0$, whose gradients generate the same Young measure $\hat{\mu}$ and $\{\nabla\tilde{\bm w}^k\}$ is $p$-equiintegrable (again from Lemma~\ref{lemma:strong-approximation-YM-and-equiintegrability}). 

    Let $\mathcal{T}_h$ be the equilateral triangular mesh of size $h<\delta$ in $\RR^2$. For $\bm u\in C^\infty(\bar{\Omega}_\delta,\RR^3)$, let $\Pi_h \bm u$ denote the unique piecewise affine interpolant subordinate to the mesh $\mathcal{T}_h$. Now consider $\tilde{\bm w}^k\in C^\infty(\bar{\Omega}_\delta,\RR^3)$ with $J(\nabla\tilde{\bm w}^k)=\abs{\partial_1\tilde{\bm w}^k\times\partial_2\tilde{\bm w}^k}>0$. Since $\tilde{\bm w}^k$ is smooth on a compact set, there exists some $\e_k$ such that
    \begin{align*}
        \abs{\partial_1\tilde{\bm w}^k\times\partial_2\tilde{\bm w}^k}\geq\e_k>0.
    \end{align*}
    We denote 
    \begin{align*}
        \mathcal{T}_h(\Omega):=\{T\in\mathcal{T}_h:T\cap\Omega\neq\emptyset\}.
    \end{align*}
    Let $T\in\mathcal{T}_h(\Omega)$ be any triangle in the mesh, then we can write 
    \begin{equation}\label{eqfem}
        \abs{\nabla\Pi_h\bm\tilde{\bm w}^k-\nabla\bm\tilde{\bm w}^k}\leq C\norm{\nabla^2\bm\tilde{\bm w}^k}_{L^\infty}h
    \end{equation}
    for some $C>0$. To see this, let us fix \(k\). Let $T$ be an equilateral triangle of the mesh whose side length is \(h\), with vertices \(a_0,a_1,a_2\) and barycentric coordinate functions \(\lambda_0,\lambda_1,\lambda_2\). The nodal affine interpolant is given by
    \[
        (\Pi_h \bm\tilde{\bm w}^k )|_T
        = \sum_{i=0}^{2} \bm\tilde{\bm w}^{\,k}(a_i)\lambda_i.
    \]
    For any fixed \(x \in \operatorname{int}T\), consider the affine Taylor polynomial
    \[
        \mathbf p_x(y)
        := \bm\tilde{\bm w}^{\,k}(x) + \nabla\bm\tilde{\bm w}^{\,k}(x)(y-x).
    \]
    Taylor's theorem gives
    \[
        \bm\tilde{\bm w}^{\,k}(a_i)
        = \mathbf p_x(a_i) + \mathbf r_i,
        \qquad
        |\mathbf r_i|
        \le \frac12 |a_i-x|^2
            \|D^2\bm\tilde{\bm w}^{\,k}\|_{L^\infty(T)}
        \le \frac12 h^2
            \|D^2\bm\tilde{\bm w}^{\,k}\|_{L^\infty(T)}.
    \]
    Since nodal interpolation reproduces affine maps, \(\Pi_h\mathbf p_x = \mathbf p_x\). Hence, interpolating the Taylor expansion and differentiating with respect to \(y\), we obtain
    \[
        \nabla\Pi_h\bm\tilde{\bm w}^{\,k} - \nabla\bm\tilde{\bm w}^{\,k}(x)
        = \sum_{i=0}^{2} \mathbf r_i \otimes \nabla\lambda_i.
    \]
    The magnitude of \(\nabla\lambda_i\) is the reciprocal of the altitude from \(a_i\). Since \(T\) is equilateral,
    \[
        |\nabla\lambda_i| = \frac{2}{\sqrt{3}\,h},
        \qquad i=0,1,2.
    \]
    Consequently,
    \[
        \begin{aligned}
            \bigl|\nabla\Pi_h \tilde{\bm w}^{\,k} - \nabla\bm\tilde{\bm w}^{\,k}(x)\bigr|
            &\le \sum_{i=0}^{2}
                |\mathbf r_i|\,|\nabla\lambda_i| \\
            &\le 3 \cdot \frac12 h^2
                \|D^2 \tilde{\bm w}^{\,k}\|_{L^\infty(T)}
                \cdot \frac{2}{\sqrt{3}\,h} \\
            &\le \sqrt{3} h\,\|D^2 \tilde{\bm w}^{\,k}\|_{L^\infty(T)}.
        \end{aligned}
    \]
    
    We will rewrite \eqref{eqfem} as 
    \begin{align*}
        \partial_i\Pi_h\bm\tilde{\bm w}^k=\partial_i\bm\tilde{\bm w}^k+\bm{R}^k_i,\quad i=1,2,
    \end{align*}
    For fixed $k$, set $K_k:=\sqrt{3}\norm{D^2\tilde{\bm w}^k}_{L^\infty}$ and $M_k:=\norm{\nabla\tilde{\bm w}^k}_{L^\infty}$. The vector error term $\bm{R}^k_i$ then satisfies $\abs{\bm{R}^k_i}\leq K_kh$. Taking the cross product we get
    \begin{align*}
        \partial_1\Pi_h\bm\tilde{\bm w}^k\times\partial_2\Pi_h\bm\tilde{\bm w}^k=\partial_1\bm\tilde{\bm w}^k\times\partial_2\bm\tilde{\bm w}^k+\partial_1\bm\tilde{\bm w}^k\times\bm{R}^k_2+\bm{R}^k_1\times\partial_2\bm\tilde{\bm w}^k+\bm{R}^k_1\times\bm{R}^k_2.
    \end{align*}
    Taking norms and using the triangle inequality,
    \begin{align*}
        \abs{\partial_1\Pi_h\bm\tilde{\bm w}^k\times\partial_2\Pi_h\bm\tilde{\bm w}^k}&\geq\abs{\partial_1\bm\tilde{\bm w}^k\times\partial_2\bm\tilde{\bm w}^k}-\abs{\partial_1\bm\tilde{\bm w}^k\times\bm{R}^k_2}-\abs{\bm{R}^k_1\times\partial_2\bm\tilde{\bm w}^k}-\abs{\bm{R}^k_1\times\bm{R}^k_2}\\
        &\geq\abs{\partial_1\bm\tilde{\bm w}^k\times\partial_2\bm\tilde{\bm w}^k}-2M_kK_kh-K_k^2h^2.
    \end{align*}
    Since $\abs{\partial_1\bm\tilde{\bm w}^k\times\partial_2\bm\tilde{\bm w}^k}\geq\varepsilon_k>0$, we can find some $h_k$ small enough so that $2M_kK_kh_k+K_k^2h_k^2<\varepsilon_k$. We then have $J(\nabla\Pi_{h_k}\tilde{\bm w}^k)>0$ and taking $h_k$ smaller if necessary, we also have $\norm{\nabla\Pi_{h_k}\tilde{\bm w}^k-\nabla\tilde{\bm w}^k}_{L^p}\rightarrow0$. Setting $\bm v^k:=\Pi_{h_k}\tilde{\bm w}^k$, we obtain a sequence of piecewise affine immersions. Finally, applying Lemma~\ref{lemma:strong-approximation-YM-and-equiintegrability} once more, $\{\nabla\bm v^k\}$ satisfy the $p$-equiintegrability condition and generate the Young measure $\hat{\mu}_{\Omega}=\mu$.
\end{proof}

We now prove our main result on Young measures, Theorem~\ref{thm:gym}. As outlined earlier, the idea is to modify the generating maps on the ``bad triangles'' where the Jacobian is small. Since Lemma~\ref{lemma:replacement-inside-triangle} applies only to strictly short maps, on each bad triangle we first apply a uniform compression of the target, perform the replacement, and then undo the compression.

\begin{proof}[Proof of Theorem \ref{thm:gym}]
    Since $\mathcal{JY}^{p,-q}(\Omega,\RR^{3\times2})\subset\mathcal{GY}^{p}(\Omega,\RR^{3\times2})$, Proposition~\ref{prop:Young-measures-generate-by-PC}, provides a sequence of piecewise affine immersions $\{\bm v^k\}\subset P_c(\Omega,\RR^3)$ such that $\{\nabla \bm v^k\}$ is $p$-equiintegrable and generates $\nu$. We write $\mathcal{T}^k$ for the mesh on which $\bm v^k$ is piecewise affine and recall from the construction that $\bm v^k$ is the restriction to $\Omega$ of a map which is affine on each \textit{whole} triangle $T\in\mathcal{T}^k(\Omega)$.

    Define the truncated function $\Phi_\e$:
    \begin{align*}
        \Phi_\e(\bm A):=\abs{\bm A}^p+\left(\abs{J(\bm A)}\vee\e\right)^{-q}.
    \end{align*}
    Since $\{\nabla \bm v^k\}$ is $p$-equiintegrable, and $\bm A\mapsto\left(\abs{J(\bm A)}\vee\e\right)^{-q}$ is bounded and continuous, for fixed $\e$ we have that
    \begin{align*}
        \int_\Omega\Phi_\e(\nabla \bm v^k)\dif x\rightarrow\int_\Omega\int_{\RR^{3\times2}}\Phi_\e(\bm F)\dif\nu_x(\bm F)\dif x\leq\int_\Omega\int_{\RR^{3\times2}}\Phi(\bm F)\dif\nu_x(\bm F)\dif x<+\infty.
    \end{align*}
    We can partition $\Omega$ into the sets $G^k_\e:=\{J(\nabla \bm v^k)>\e\}$ and $B^k_\e:=\Omega\setminus G^k_\e=\{J(\nabla \bm v^k)\leq\e\}$. Then,
    \begin{align*}
        \int_{\Omega}\Phi_\e(\nabla \bm v^k)\dif x = \int_\Omega\abs{\nabla \bm v^k}^p\dif x + \int_{G^k_\e}\frac{1}{\abs{J(\nabla \bm v^k)}^q}\dif x + \int_{B^k_\e}\frac{1}{\e^q}\dif x\leq C<+\infty,
    \end{align*}
    thus, $\abs{B^k_\e}\leq C\e^q$. The idea now is to modify
    $\bm v^k$ appropriately on $B^k_\e$.

    Since $\bm v^k$ is affine on each $T\in\mathcal{T}^k(\Omega)$, the Jacobian $J(\nabla\bm v^k)$ is constant on each such triangle, so that
    \begin{align*}
        B_{\varepsilon}^k=\Omega\cap\bigcup_{T\in\mathcal{B}^k_\varepsilon}T,\qquad \mathcal{B}_{\varepsilon}^k:=\{T\in\mathcal{T}^k(\Omega):J(\nabla\bm v^k|_{T})\leq\varepsilon\},
    \end{align*}
    and $\mathcal{B}^k_\varepsilon$ is a finite collection. If $T$ meets $\partial\Omega$, the intersection $\Omega\cap T$ need not be a triangle, so we make the replacement on the entirety of $T$, on which $\bm v^k$ is affine, and then restrict to $T\cap\Omega$ in the end. We focus on one of these triangles, $T_i\in\mathcal{B}^k_\e$, where $\bm v^k_i:=\bm v^k|_{T_i}$ is affine. Let $\sigma^{(1)}_i,\sigma^{(2)}_i$ denote the singular values of $\nabla\bm v^k_i$, labelled so that $\sigma_i^{(1)}\geq\sigma_i^{(2)}$ and recall that $\sigma_i^{(1)}=\textrm{Lip}(\bm v^k_i)$.

    Since $T_i\in \mathcal{B}^k_\e$, $0<\sigma^{(1)}_i\sigma^{(2)}_i\leq\e$. Apply the uniform compression $\Lambda:\bm y\mapsto \frac{1}{1+\sigma^{(1)}_i}\bm y$, so that the composed map, $\tilde{\bm v}^k_i:=\Lambda\circ\bm v^k_i$ is a non-singular, strictly short affine map. Let $\delta_i>0$ and use Lemma~\ref{lemma:replacement-inside-triangle} with $a_0=\pi/3$ to replace $\tilde{\bm v}^k_i$ with a map $\tilde{\bm z}^k_i$ satisfying $\textrm{Lip}(\tilde{\bm z}^k_i)\leq C_1$, $J(\nabla\tilde{\bm z}^k_i)\geq J(\nabla\tilde{\bm v}^k_i)$ on $T_i$ and $J(\nabla\tilde{\bm z}^k_i)=1$ on the shrunk triangle $T^{\delta_i}_i:=D^{1-\delta_i}T_i$. Undoing the compression, the map $\bm z^k_i:=\Lambda^{-1}\circ\tilde{\bm{z}}^k_i$ satisfies
    \begin{align}\label{eq:replacement-boundary-condition}
        \bm z^k_i=\bm v^k_i \text{ on }\partial T_i,
    \end{align}
    \begin{align}\label{eq:replacement-Lipschitz-constant}
        \textrm{Lip}(\bm z^k_i)\leq C_1(1+\sigma^{(1)}_i)=C_1(1+\textrm{Lip}(\bm v^k_i)),
    \end{align}
    \begin{align}\label{eq:replacement-J-on-T}
        J(\nabla \bm z^k_i)\geq J(\nabla\bm v^k_i) \text{ on }T_i
    \end{align}
    \begin{align}\label{eq:replacement-J-on-Tdelta}
        J(\nabla\bm z^k_i)=(1+\sigma^{(1)}_i)^2\geq 1 \text{ on }T^{\delta_i}_i. 
    \end{align}
    
    Setting $J(\nabla\bm v^k_i)= m_i>0$ we have
    \begin{align*}
        \int_{T_i\cap\Omega}\frac{1}{\abs{J(\nabla \bm z^k_i)}^q}\dif x&\leq\int_{T_i\setminus T^{\delta_i}_i}\frac{1}{\abs{J(\nabla \bm v^k_i)}^q}\dif x+\int_{T^{\delta_i}_i\cap\Omega}\frac{1}{(1+\sigma^{(1)}_i)^{2q}}\dif x\\
        &\leq\frac{1}{m_i^q}\abs{T_i\setminus T^{\delta_i}_i}+\abs{T_i\cap\Omega}.
    \end{align*}
    Since $\abs{T_i\setminus T^{\delta_i}_i}=(2\delta_i-\delta_i^2)\abs{T_i}$ and $\abs{T_i\cap\Omega}>0$, we can choose $\delta_i>0$ small enough so that
    \begin{align*}
        \frac{(2\delta_i-\delta_i^2)\abs{T_i}}{m_i^q}\leq \abs{T_i\cap\Omega},
    \end{align*}
    so that we have
    \begin{align}\label{eq:J-Lq-bound-on-triangle}
        \int_{T_i\cap\Omega}\frac{1}{\abs{J(\nabla \bm z^k_i)}^q}\dif x\leq 2\abs{T_i\cap\Omega}.
    \end{align}
     Making similar replacements in each $T_i\in \mathcal{B}^k_\e$, we obtain the map 
    \begin{align*}
        \bm w^k_\e(x):=\begin{cases}
            \bm v^k(x)&x\in G^k_\e\\
            \bm z^k_i(x)&x\in T_i\cap\Omega,T_i\in \mathcal{B}^k_\e,
        \end{cases},
    \end{align*}
    which is continuous from~\eqref{eq:replacement-boundary-condition}. Furthermore, from~\eqref{eq:replacement-boundary-condition} and~\eqref{eq:replacement-Lipschitz-constant}, it follows that $\bm w^k_\e\in W^{1,p}$, and
    \begin{align}\label{eq:Phi-upper-bound-1}
        \int_\Omega\Phi(\nabla\bm w^k_\e)\dif x &= \int_\Omega\abs{\nabla \bm w^k_\e}^p\dif x+\int_\Omega\frac{1}{\abs{J(\nabla\bm w^k_\e)}^q}\dif x\nonumber\\
        &=\int_{G^k_\e}\abs{\nabla\bm v^k}^p\dif x + \int_{G^k_\e}\frac{1}{\abs{J(\nabla\bm v^k)}^q}\dif x\nonumber\\
        &\quad +\sum_{i}\int_{T_i\cap\Omega}\abs{\nabla\bm w^k_\e}^p\dif x+\sum_{i}\int_{T_i\cap\Omega}\frac{1}{\abs{J(\nabla\bm w^k_\e)}^q}\dif x
    \end{align}
    From~\eqref{eq:replacement-Lipschitz-constant},
    \begin{align}\label{eq:lipschitz-bound-on-triangles}
        \sum_{i}\int_{T_i\cap\Omega}\abs{\nabla\bm w^k_\e}^p\dif x&\leq\sum_i\int_{T_i\cap\Omega}C_1^p\abs{1+\textrm{Lip}(\bm v_i^k)}^p\dif x\nonumber\\
        &\leq C_3\sum_i\left(\abs{T_i\cap\Omega}+\int_{T_i\cap\Omega}\abs{\nabla\bm v^k_i}^p\dif x\right)\nonumber\\
        &=C_3\left(\abs{B^k_\e}+\int_{B^k_\e}\abs{\nabla\bm v^k}^p\dif x\right)
    \end{align}
    for some constant $C_3>0$. Combining the above with~\eqref{eq:J-Lq-bound-on-triangle} and~\eqref{eq:Phi-upper-bound-1},
    \begin{align*}
        \int_\Omega\Phi(\nabla\bm w^k_\e)\dif x\leq C_4\left(\int_\Omega\abs{\nabla\bm v^k}^p\dif x+\int_{G^k_\e}\frac{1}{\abs{J(\nabla\bm v^k)}^q}\dif x+\abs{B^k_\e}\right)\leq C_5\left(1+\abs{B^k_\e}\right)
    \end{align*}
    for constants $C_4$, $C_5>0$. We pick a diagonal sequence $\bm u^k:=\bm w^k_{\e_k}$ with $\e_k\rightarrow 0$, and notice that $\{x\in\Omega:\nabla\bm v^k\neq\nabla\bm u^k\}\subset B^k_{\e_k}$. Since $\abs{B^k_{\e_k}}\leq C\e_k^q\rightarrow 0$ as $k\rightarrow\infty$, we conclude by Lemma~\ref{lemma:strong-approximation-YM-and-equiintegrability} that $\{\nabla\bm u^k\}$ generates the same Young measure as $\{\nabla\bm v^k\}$.

    We turn to equiintegrability. Notice that since $\bm w^k_\e=\bm v^k$ on $G^k_\e$ and $\bm w^k_\e=\bm z^k_i$ on each $T_i\cap\Omega$ with $T_i\in \mathcal{B}^k_\e$,
    \begin{align}\label{eq:YM-equiintegrability-I}
        \int_{\Omega}\Phi(\nabla\bm{w}^k_\e)\dif x=\int_{G^k_\e}\Phi(\nabla\bm v^k)\dif x+\sum_i\int_{T_i\cap\Omega}\abs{\nabla\bm z^k_i}^p\dif x+\sum_i\int_{T_i\cap\Omega}\abs{J(\nabla\bm z^k_i)}^{-q}\dif x
    \end{align}
    On $G^k_\e$, since $\Phi=\Phi_\e$ we can write
    \begin{align*}
        \int_{G^k_\e}\Phi(\nabla \bm v^k)\dif x = \int_{G^k_\e}\Phi_\e(\nabla\bm v^k)\dif x\leq\int_\Omega\Phi_\e(\nabla\bm v^k)\dif x.
    \end{align*}
    Since $\Phi_\e(\bm F)\geq\abs{\bm F}^p\geq0$,
    \begin{align}\label{eq:YM-equiintegrability-II}
        \int_{G^k_\e}\Phi(\nabla \bm v^k)\dif x \leq\int_{\Omega}\Phi_\e(\nabla\bm v^k)\dif x.
    \end{align}
    From~\eqref{eq:lipschitz-bound-on-triangles}, we have for some $C_3>0$,
    \begin{align}\label{eq:YM-equiintegrability-III}
        \sum_i\int_{T_i\cap\Omega}\abs{\nabla\bm z^k_i}^p\dif x\leq C_3\left(\abs{B^k_\e}+\int_{B^k_\e}\abs{\nabla\bm v^k}^p\dif x\right).
    \end{align}
    Then using~\eqref{eq:J-Lq-bound-on-triangle},~\eqref{eq:YM-equiintegrability-II}, and~\eqref{eq:YM-equiintegrability-III} in~\eqref{eq:YM-equiintegrability-I}, we have
    \begin{align}\label{eq:YM-equiintegrability-IV}
        \int_\Omega\Phi(\nabla\bm w^k_\e)\dif x\leq\int_\Omega\Phi_\e(\nabla\bm v^k)\dif x+C_3\int_{B^k_\e}\abs{\nabla\bm v^k}^p\dif x+C_3\abs{B^k_\e}.
    \end{align}
    We will take a limit of the above along an appropriately chosen subsequence $\e_k\rightarrow0$. First, from the $p$-equiintegrability of $\{\nabla \bm v^k\}$, as $k\rightarrow\infty$
    \begin{align*}
        \int_\Omega\Phi_\e(\nabla\bm v^k)\dif x\rightarrow\int_\Omega\int_{\RR^{3\times 2}}\Phi_\e(\bm F)\dif\nu_x(\bm F)\dif x.
    \end{align*}
    Second, from the monotone convergence theorem, as $\e\rightarrow0$
    \begin{align*}
        \int_\Omega\int_{\RR^{3\times2}}\Phi_\e(\bm F)\dif\nu_x(\bm F)\dif x\rightarrow\int_\Omega\int_{\RR^{3\times2}}\Phi(\bm F)\dif\nu_x(\bm F)\dif x
    \end{align*}
    Then for an appropriate sequence $\e_k\rightarrow0$, and using $\abs{B^k_{\e_k}}\rightarrow0$ (in conjunction with the $p$-equiintegrability of $\{\nabla\bm v^k\}$), we can take the limit in~\eqref{eq:YM-equiintegrability-IV} to get
    \begin{align*}
         \limsup_{k\rightarrow\infty}\int_\Omega\Phi(\nabla\bm u^k)\dif x\leq\int_\Omega\int_{\RR^{3\times2}}\Phi(\bm F)\dif\nu_x(\bm F)\dif x.
    \end{align*}
    Of course, we also have from the fundamental property of Young measures (Theorem~\ref{thm:fundamental-property-of-Young-measures}) 
    \begin{align*}
        \liminf_{k\rightarrow\infty}\int_\Omega\Phi(\nabla\bm u^k)\dif x\geq\int_\Omega\int_{\RR^{3\times2}}\Phi(\bm F)\dif\nu_x(\bm F)\dif x.
    \end{align*}
    Since $\{\nabla\bm u^k\}$ generates $\nu$, we can conclude the equiintegrability of $\{\Phi(\nabla\bm u^k)\}$, i.e.,
    \begin{align*}
        \lim_{k\rightarrow\infty}\int_\Omega\Phi(\nabla\bm u^k)\dif x=\int_\Omega\int_{\RR^{3\times2}}\Phi(\bm F)\dif\nu_x(\bm F)\dif x.
    \end{align*}
\end{proof}

\subsection{Smooth generation}

In the next section we apply Theorem~\ref{thm:gym} to construct recovery sequences for the dimension reduction problem, where it is important that the generating sequences be smooth. To regularize the piecewise affine sequence produced by Theorem~\ref{thm:gym}, we will use the following result of Conti and Dolzmann~\cite[Proposition 4.1]{conti_derivation_2006}:
\begin{proposition}[Conti-Dolzmann]\label{prop:conti-dolzmann}
    Let $\Omega\subset\RR^2$ be a Lipschitz domain, $\Gamma\subset\bar{\Omega}$ a connected set which contains $\partial\Omega$ and define for $\eta>0$, the set $\Gamma^\eta:=\{x\in\bar{\Omega}:\dist(x,\Gamma)<\eta\}$. If $\bm u\in C^\infty(\bar{\Omega},\RR^3)$ and $\mathrm{rank}(\nabla \bm u)=2$ on $\Omega\setminus\Gamma^\eta$, then for any $\delta>0$, there exists $\bm w\in C^\infty(\bar{\Omega},\RR^3)$ such that $\norm{\bm u-\bm w}_{C^0(\Omega)}\leq\delta$,
    \begin{align*}
        \abs{\nabla \bm w}\leq c(\abs{\nabla \bm u}+1),\quad\abs{\partial_1\bm w\times\partial_2\bm w}\geq c\abs{\partial_1\bm u\times\partial_2\bm u}
    \end{align*}
    and
    \begin{align*}
        \bm w=\bm u\text{ on }\bar\Omega\setminus\Gamma^{2\eta},\quad\abs{\partial_1\bm w\times\partial_2\bm w}\geq c\text{ on }\Gamma^\eta.
    \end{align*}
All constants are absolute constants.
\end{proposition}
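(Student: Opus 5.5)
The plan is to modify $\bm u$ only in the collar $\Gamma^{2\eta}$, leaving it unchanged on $\bar\Omega\setminus\Gamma^{2\eta}$. The modification is a fine-scale, small-amplitude perturbation that raises the area Jacobian to an absolute constant on $\Gamma^\eta$ and keeps $\abs{\nabla\bm w}\le c(\abs{\nabla\bm u}+1)$. It proceeds in three steps: a topological step producing a rank-two frame field, a quantitative convex-integration step realizing that frame field, and a gluing step. The hardest part, getting bounds with absolute constants, is discussed at the end.

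\textbf{Step 1 (frame field).} We first produce a continuous field of rank-two matrices $\bm G:\Gamma^{2\eta}\to\RR^{3\times2}$ that agrees with $\nabla\bm u$ on $\Gamma^{2\eta}\setminus\Gamma^{\eta}$. This field should satisfy $J(\bm G)\ge c$ on $\Gamma^\eta$, together with the bound $\abs{\bm G}\le c(\abs{\nabla\bm u}+1)$.
\begin{itemize}
\item Here the hypothesis that $\Gamma$ is connected and contains $\partial\Omega$ enters. Each connected component of $\Omega\setminus\Gamma$ is simply connected, so there is no topological obstruction to the formal problem.
\item Concretely, the orthonormalized frame of $\nabla\bm u$ on the part of $\Omega\setminus\Gamma^\eta$ adjacent to the collar is a map into the Stiefel manifold $V_2(\RR^3)\cong SO(3)$. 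I will extend this map across the collar, treating the collar as a neighbourhood of a connected set that separates nothing.
\item I then rescale the extended frame by $1+\abs{\nabla\bm u}$. This rescaling gives the gradient bound.
\end{itemize}

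\textbf{Step 2 (realizing the frame field).} Next I realize $\bm G$ up to small errors by a genuine gradient, using a quantitative convex-integration step in the spirit of Gromov--Eliashberg (Theorem~\ref{thm:gromov-eliashberg}). Locally I set
\begin{align*}
\bm w=\bm u+\varphi(x)\,\lambda^{-1}\bm\Theta(x,\lambda x),
\end{align*}
where $\varphi$ is a cutoff supported in $\Gamma^{2\eta}$ with $\varphi=1$ on $\Gamma^\eta$, and $\bm\Theta$ is periodic in its second variable. The oscillation $\bm\Theta$ is chosen, one direction at a time (two successive one-dimensional steps), so that $\nabla\bm u+\partial_y\bm\Theta$ takes values in a loop whose convex hull contains $\nabla\bm u$. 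Each loop is chosen to stay in the set of rank-two matrices close to $\bm G$, so the cross product of the columns of $\nabla\bm w$ stays bounded below.

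Taking $\lambda$ large makes $\norm{\bm w-\bm u}_{C^0}\le\delta$ and makes the lower-order terms, those involving $\nabla\varphi$ and $\partial_x\bm\Theta$, negligible. The amplitude of each loop is proportional to $1+\abs{\nabla\bm u}$, which yields $\abs{\nabla\bm w}\le c(\abs{\nabla\bm u}+1)$.

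\textbf{Step 3 (gluing).} On the transition region $\Gamma^{2\eta}\setminus\Gamma^\eta$, the map $\bm u$ is already an immersion, since $\mathrm{rank}(\nabla\bm u)=2$ on $\Omega\setminus\Gamma^\eta$. There I choose the oscillation amplitude to vanish and keep it small, so that $J(\nabla\bm w)\ge c\,J(\nabla\bm u)$. This gives the remaining lower bound.

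\textbf{Main obstacle.} I expect the main difficulty to be Step~2 with \emph{absolute} constants. The constant $c$ must not depend on $\bm u$, $\eta$ or $\Gamma$, so the loops must be constructed by a universal recipe depending only on the normalized matrix $\nabla\bm u/(1+\abs{\nabla\bm u})$. This matrix ranges over a compact set, so I would try to choose the loops continuously over that compact set, including at degenerate (rank $\le1$) points, and then extract $c$ by compactness.
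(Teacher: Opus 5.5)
The paper does not prove this statement. It quotes it from Conti--Dolzmann \cite[Proposition 4.1]{conti_derivation_2006}, so your proposal has to stand on its own. Its architecture (formal solution, convex integration, cutoff) is reasonable, and the topological part of Step~1 is not where the difficulty lies. There is, however, a genuine gap exactly where the statement is quantitative: the collar $\Gamma^{2\eta}\setminus\Gamma^\eta$. There $\bm w$ must pass from $J(\nabla\bm w)\ge c$ to $\bm w=\bm u$ while keeping $J(\nabla\bm w)\ge cJ(\nabla\bm u)$, with $J(\nabla\bm u)>0$ but possibly tiny.

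In Step~3 you propose to keep the oscillation amplitude small there, but the amplitude $\varphi\lambda^{-1}\bm{\Theta}$ is small anyway. What enters $J$ is $\nabla\bm w=\nabla\bm u+\varphi\,\partial_y\bm{\Theta}+O(\lambda^{-1})$. Since $\varphi$ runs continuously from $1$ to $0$, every intermediate matrix $\nabla\bm u+s\,\partial_y\bm{\Theta}$, $s\in(0,1)$, occurs, and nothing in your construction keeps these in $\{J\ge cJ(\nabla\bm u)\}$.

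For instance, in the first one-dimensional step set $\bm b:=\partial_2\bm u$, take orthonormal $\bm n_1,\bm n_2\perp\bm b$ and $\bm e(\theta):=\cos\theta\,\bm n_1+\sin\theta\,\bm n_2$. Write the projection of $\partial_1\bm u$ onto $\bm b^\perp$ as $r\,\bm e(\alpha)$ with $r=J(\nabla\bm u)/\abs{\bm b}$, and use the circular loop $\partial_1\bm w\approx\partial_1\bm u+sA\,\bm e(\lambda x_1)$ with $A>r$. This is what that step needs in order to raise $J$ above $c$ where $J(\nabla\bm u)<c$, which the hypotheses allow near the inner edge of the collar. Then $\abs{\partial_1\bm w\times\partial_2\bm w}\approx\abs{\bm b}\,\abs{r\,\bm e(\alpha)+sA\,\bm e(\lambda x_1)}$. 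Since $sA-r$ changes sign across the collar, within distance $O(\lambda^{-1})$ of one of its zeros there is a point with $\lambda x_1\equiv\alpha+\pi\pmod{2\pi}$, where $J(\nabla\bm w)=O(\lambda^{-1})$. This is below $cJ(\nabla\bm u)$ for large $\lambda$, because $J(\nabla\bm u)$ is bounded below on the compact set $\bar\Omega\setminus\Gamma^\eta$. Conversely, keeping $\abs{\partial_y\bm{\Theta}}\ll J(\nabla\bm u)/\abs{\nabla\bm u}$ on the whole collar avoids this but is incompatible with $J\ge c$ on $\Gamma^\eta$: the switch from weak to full oscillation has to happen somewhere in the collar.

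The same problem already shows in Step~1. A continuous $\bm G$ with $\bm G=\nabla\bm u$ on $\Gamma^{2\eta}\setminus\Gamma^\eta$ and $J(\bm G)\ge c$ on $\Gamma^\eta$ forces $J(\nabla\bm u)\ge c$ on $\{\dist(\cdot,\Gamma)=\eta\}$, which is not assumed. The rescaled orthonormal frame is not $\nabla\bm u$ there in any case. Likewise, loops ``close to $\bm G$'' cannot have barycenter $\nabla\bm u$ where $\nabla\bm u$ is nearly degenerate.

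What is missing is a transition that is monotone in nondegeneracy: oscillation profiles with $J(\nabla\bm u+s\,\partial_y\bm{\Theta})\ge cJ(\nabla\bm u)$ for \emph{every} $s\in[0,1]$. In the one-column model above, the projected loop must then avoid a conical neighbourhood of the ray $\{-t\,r\,\bm e(\alpha):t\ge1\}$. Together with the requirement at $s=1$ when $J(\nabla\bm u)<c$, this forbids loops winding around the origin, so circles have to be replaced by, e.g., back-and-forth arcs. This must then be combined with the second column and with constants independent of $\bm u$.

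Your compactness remark does not settle the constants. Where $\partial_2\bm u=0$ the fiber $\{\bm a:\bm a\times\partial_2\bm u\neq0\}$ is empty, so the loops cannot be continuous functions of $\nabla\bm u/(1+\abs{\nabla\bm u})$ alone. They must be built around the formal solution $\bm G$, whose uniform bounds you would also have to establish. This monotone, uniform transition is the real content of the proposition, and the proposal does not supply it.
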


We will now prove Proposition \ref{prop:smooth-YM-generation}. The proof relies on mollifying the piecewise affine sequences produced by Theorem~\ref{thm:gym}. Since mollification may destroy the nondegeneracy of the Jacobian near the edges of the triangulation, we repair the mollified maps there using Proposition~\ref{prop:conti-dolzmann}.

\begin{proof}[Proof of Proposition \ref{prop:smooth-YM-generation}]
    Let $\{\bm v^k\}$ be a sequence generating $\nu$ given by Theorem~\ref{thm:gym}. Each $\bm v^k$ is piecewise affine on a finite triangulation. Denote the union of the 1-skeleton of the triangulation and $\partial\Omega$ by $\Gamma_k$. After extending the maps to $\RR^2$, mollify $\bm v^k$ with a standard mollification kernel $\rho_{\eta_k}$ with support in $B_{\eta_k}(0)$, and let $\tilde{\bm v}^k:=\bm v^k*\rho_{\eta_k}$. Note that since the maps are piecewise affine, for $x\notin\Gamma_k^{\eta_k}:=\{x:\dist(x,\Gamma_k)<\eta_k\}$, $\tilde{\bm v}^k(x)=\bm v^k(x)$.

    For each fixed $k$, the set $\Gamma_k$ is a finite graph and therefore has zero Lebesgue measure, hence $\abs{\Gamma^{3\eta}_k}\rightarrow0$ as $\eta\rightarrow0$. Furthermore, since $\Phi(\nabla\bm v^k)\in L^1(\Omega)$, it follows from the absolute continuity of the Lebesgue integral that $\int_{\Gamma^{3\eta}_k}\Phi(\nabla\bm v^k)\dif x\rightarrow0$ as $\eta\rightarrow0$. We may therefore choose $\eta_k>0$ sufficiently small so that 
    $$
        \abs{\Gamma_k^{3\eta_k}}\leq\frac{1}{k}, \qquad \int_{\Gamma_k^{3\eta_k}}\Phi(\nabla\bm v^k)\,\dif x\le \frac1k.
    $$ 
    As a result, $\abs{\{\nabla\bm v^k\neq\nabla \tilde{\bm v}^k\}}\rightarrow 0$ as $k\rightarrow\infty$, and therefore from Lemma~\ref{lemma:strong-approximation-YM-and-equiintegrability}, $\nabla\bm v^k$ and $\nabla\tilde{\bm v}^k$ generate the same Young measure. 
    Also, since $J(\nabla\bm v^k)>0$ a.e., $\textrm{rank}(\nabla\tilde{\bm v}^k(x))=2$ for $x\in\Omega\setminus\Gamma_k^{\eta_k}$.

    Applying Proposition~\ref{prop:conti-dolzmann} to $\tilde{\bm v}^k$ with $\eta=\eta_k$, we get $\bm u^k$ such that $\bm u^k=\tilde{\bm v}^k$ on $\bar{\Omega}\setminus\Gamma^{2\eta_k}_k$. Thus, $\abs{\{\nabla \bm u^k\neq\nabla\tilde{\bm v}^k\}}\leq\abs{\Gamma^{2\eta_k}_k}\rightarrow 0$ as $k\rightarrow\infty$ and $\{\nabla \bm u^k\}$ generates the same Young measure $\nu$. Also,
    $\abs{\nabla \bm u^k}\leq c(\abs{\nabla \tilde{\bm v}^k}+1)$, $J(\nabla\bm u^k)\geq c J(\nabla \tilde{\bm v}^k)$ and importantly $J(\nabla\bm u^k)\geq c$ on $\Gamma^{\eta_k}_k$. Now we compute $\int_\Omega\Phi(\nabla\bm u^k)\dif x$: 

    On $\Gamma^{2\eta_k}_k$, we have
    \begin{align*}
        \int_{\Gamma^{2\eta_k}_k}\abs{\nabla\bm u^k}^p\dif x &\leq  c\int_{\Gamma^{2\eta_k}_k}\left(\abs{\nabla\tilde{\bm v}^k}^p+1\right)\dif x\\
        &\leq c\int_{\Gamma_k^{2\eta_k}}\int_{\mathbb R^2}|\nabla\bm v^k(y)|^p\rho_{\eta_k}(x-y)\,\dif y\dif x + c|\Gamma^{2\eta_k}_k|\\
        &=c\int_{\mathbb R^2}|\nabla\bm v^k(y)|^p\left(\int_{\Gamma_k^{2\eta_k}}\rho_{\eta_k}(x-y)\,\dif x\right)\dif y+ c|\Gamma^{2\eta_k}_k|\\
        &\leq c \left(\int_{\Gamma^{3\eta_k}_k}\abs{\nabla{\bm v}^k(y)}^p\dif y + |\Gamma^{2\eta_k}_k|\right)
    \end{align*}
    since the inner integral in the second-to-last line vanishes whenever \(y\notin\Gamma_k^{3\eta_k}\). The sequence $\{\nabla \bm v^k\}$ is $p$-equiintegrable, and, by our choice of $\eta_k$, the right-hand side tends to zero. 
 
    On the set $\Gamma^{\eta_k}_k$, we have
    \begin{align*}
        \int_{\Gamma^{\eta_k}_k}(J(\nabla\bm u^k))^{-q}\dif x\leq c^{-q}\abs{\Gamma^{\eta_k}_k},
    \end{align*}
    which also approaches zero. On the set $\Gamma^{2\eta_k}_k\setminus\Gamma^{\eta_k}_k$, $J(\nabla\bm u^k)\geq cJ(\nabla\tilde{\bm{v}}^k)=cJ(\nabla\bm v^k)$. Then 
    \begin{align*}
        \int_{\Gamma^{2\eta_k}_k\setminus\Gamma^{\eta_k}_k}(J(\nabla\bm u^k))^{-q}\dif x\leq c^{-q}\int_{\Gamma^{2\eta_k}_k\setminus\Gamma^{\eta_k}_k}J(\nabla\bm v^k)^{-q}\dif x,
    \end{align*}
    which, from the equiintegrability of $\Phi(\nabla\bm v^k)$, also approaches zero as $k\rightarrow\infty$. On the remaining set, i.e. $\Omega\setminus\Gamma^{2\eta_k}_k$, we have $\nabla\bm u^k=\nabla\bm v^k$, therefore~\eqref{eq:Cinfty-Phi-equiintergability} follows, i.e.,
    \begin{align*}
        \lim_{k\rightarrow\infty}\int_\Omega\Phi(\nabla\bm u^k)\dif x=\int_\Omega\int_{\RR^{3\times 2}}\Phi(\bm F)\dif\nu_x(\bm F)\dif x.
    \end{align*}
    
\end{proof}

\section{Proof of Theorem \ref{thm:gc}}\label{sec:problem-formulation-Young}
\subsection{Growth bounds}

To characterize the limit problem as $\e\rightarrow0$, we define $W_0:\Omega\times\RR^{3\times 2}\rightarrow[0,+\infty]$ as
\begin{align}
    \label{eqn:dimension-reduced-energy-density}
    W_0(x',\bar{\bm F}):=\inf_{\bm z\in\RR^3}\{W(x',(\bar{\bm F}|\bm z)), \ \det (\bar{\bm F}|\bm z) > 0\}.
\end{align}
Note that $W_0$ takes the value $+\infty$ precisely on matrices $\bar{\bm F}\in\RR^{3\times2}$ of rank less than two (since no admissible $\bm z$ exists for such matrices).
    
We first prove the following lemma that allows us to show that $W_0$ satisfies certain desirable growth conditions:
\begin{lemma}
    \label{lem:solution-to-optimization-problem}
    Let $\bm x, \bm y \in \RR^n\setminus\{0\}$ be such that $\bm x\cdot \bm y > 0$, and let $p,s \in (0, \infty)$. For given $\bm y$, the solution to
			\begin{equation}\label{eqn:x-y-relation}
				\abs{\bm x}^{p-2} (\bm y\cdot \bm x)^{s+1} \bm x = \frac{s}{p} \bm y
			\end{equation}
			is
			\begin{align*}
                \bm x = \left(\frac{s}{p} \abs{\bm y}^{-p-2s }\right)^{1/(p+s)}\bm y
            \end{align*}
\end{lemma}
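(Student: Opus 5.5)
The plan is to exploit the structure of \eqref{eqn:x-y-relation}. Its left-hand side is a scalar multiple of $\bm x$ and its right-hand side is a positive multiple of $\bm y$, so $\bm x$ must be parallel to $\bm y$. I will reduce to a scalar equation and solve it explicitly.

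\textbf{Step 1: reduce to collinearity.} Since $\bm x\neq0$ and $\bm x\cdot\bm y>0$, the scalar
\begin{align*}
\kappa:=\abs{\bm x}^{p-2}(\bm y\cdot\bm x)^{s+1}
\end{align*}
is well defined and strictly positive. The equation then reads $\kappa\bm x=\frac{s}{p}\bm y$. Hence $\bm x=t\bm y$ with $t=\frac{s}{p\kappa}>0$, and the sign of $t$ is consistent with the hypothesis $\bm x\cdot\bm y>0$.

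\textbf{Step 2: solve for $t$.} Substituting $\bm x=t\bm y$ gives $\abs{\bm x}=t\abs{\bm y}$ and $\bm y\cdot\bm x=t\abs{\bm y}^2$. The equation becomes
\begin{align*}
t^{p-2}\abs{\bm y}^{p-2}\,t^{s+1}\abs{\bm y}^{2s+2}\,t\,\bm y=\frac{s}{p}\bm y .
\end{align*}
Since $\bm y\neq0$, this reduces to the scalar equation $t^{p+s}\abs{\bm y}^{p+2s}=\frac{s}{p}$. The map $t\mapsto t^{p+s}$ is a strictly increasing bijection of $(0,\infty)$ because $p+s>0$. So there is exactly one positive root, namely $t=\left(\frac{s}{p}\abs{\bm y}^{-p-2s}\right)^{1/(p+s)}$, which is the claimed formula. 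Uniqueness follows because Step 1 forces every solution into this one-parameter family with $t>0$.

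\textbf{Step 3: verification.} Plugging the candidate back in confirms that it satisfies \eqref{eqn:x-y-relation} and the constraint $\bm x\cdot\bm y=t\abs{\bm y}^2>0$.

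There is no real obstacle here. The only point requiring care is Step 1: one must argue that the positivity of $\kappa$, which comes from $\bm x\cdot\bm y>0$, makes both collinearity and the sign of $t$ automatic. Without that hypothesis, $(\bm y\cdot\bm x)^{s+1}$ could be ill-defined or negative for non-integer $s$.
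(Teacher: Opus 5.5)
Your proof is correct and follows the same structure as the paper's: you first show $\bm x=t\bm y$ with $t>0$, then substitute to obtain the scalar equation $t^{p+s}\abs{\bm y}^{p+2s}=\frac{s}{p}$. The only difference is the collinearity step. You read it off directly from $\kappa\bm x=\frac{s}{p}\bm y$ with $\kappa>0$, whereas the paper projects the equation onto $\bm x$ and $\bm y$, takes the ratio, and uses the equality case of Cauchy--Schwarz, so your route is the shorter one.
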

\begin{proof}
    Suppose $\bm x$ solves~\eqref{eqn:x-y-relation} then by projecting~\eqref{eqn:x-y-relation} along $\bm x$ and $\bm y$, we see that
    \begin{align*}
        p\abs{\bm x}^p=s(\bm y\cdot \bm x)^{-s},\qquad p\abs{\bm x}^{p-2}=s(\bm y\cdot \bm x)^{-s-2}\abs{\bm y}^2.
    \end{align*}
    Taking the ratio of the above two equations (since $\bm x,\bm y\neq \bm 0$ and $\bm x\cdot \bm y>0$), it is necessary that,
    \begin{align*}
        \frac{\bm x}{\abs{\bm x}}\cdot\frac{\bm y}{\abs{\bm y}}=1.
    \end{align*}
    Thus $\bm x=\alpha \bm y$ for some $\alpha>0$, and plugging this into~\eqref{eqn:x-y-relation}, we get
    \begin{align*}
        \bm 0=\left(\alpha^{p-2}\abs{\bm y}^{p-2}(\alpha \bm y \cdot \bm y)^{s+1}\alpha - \frac{s}{p} \right)\bm y = \left(\alpha^{p+s}\abs{\bm y}^{p-2}(\bm y \cdot \bm y)^{s+1} - \frac{s}{p} \right)\bm y = \left(\alpha^{p+s}\abs{\bm y}^{p+2s} - \frac{s}{p} \right)\bm y,
    \end{align*}
    thus
    \begin{align*}
        \alpha = \left(\frac{s}{p}\abs{\bm y}^{-p-2s}\right)^{1/(p+s)},
    \end{align*}
    which yields the desired solution.
\end{proof}

With Lemma~\ref{lem:solution-to-optimization-problem} in hand, we have the following proposition that establishes two-sided bounds on $W_0$:
\begin{proposition}
    \label{prop:W0-growth-condition}
    Suppose $W$ satisfies~\eqref{itm:local-injectivity} and~\eqref{itm:dimension-reduction-growth}. Then $W_0$ is a Carath\'eodory integrand on $\Omega\times\RR^{3\times2}$ and satisfies
    \begin{align}\label{eqn:W0-growth-condition}
        C_1\left(c_p\left(\abs{\bar{\bm F}}^p+\frac{1}{J(\bar{\bm F})^{\frac{ps}{p+s}}}\right)   - C_2\right) \leq W_0(\cdot, \bar{\bm F})\leq 2 C_3\left(C_p\left(\abs{\bar{\bm{F}}}^p+\frac{1}{J(\bar{\bm F})^{\frac{ps}{p+s}}}\right)+C_4\right)
    \end{align}
    where $J(\bar{\bm F}):=\left(\det\bar{\bm F}^T\bar{\bm F}\right)^{1/2}$ and
    \[c_p := 2^{\min\{p/2-1,0\}}, \qquad C_p := 2^{\max\{p/2-1,0\}}.\]
\end{proposition}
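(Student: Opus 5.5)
The plan is to reduce the infimum over $\bm z$ to an explicit one-dimensional minimization, using the algebraic structure of $(\bar{\bm F}|\bm z)$. Fix $\bar{\bm F}$ of rank two, and let $\bm n:=\frac{\bar{\bm F}e_1\times\bar{\bm F}e_2}{J(\bar{\bm F})}$ be the unit normal. Two identities drive everything:
\begin{align*}
\det(\bar{\bm F}|\bm z)=J(\bar{\bm F})\,(\bm n\cdot\bm z),\qquad \abs{(\bar{\bm F}|\bm z)}^2=\abs{\bar{\bm F}}^2+\abs{\bm z}^2 .
\end{align*}
So the admissible $\bm z$ are exactly those with $\bm n\cdot\bm z>0$. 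If $\bar{\bm F}$ has rank less than two, no admissible $\bm z$ exists and $W_0=+\infty$. In that case both sides of~\eqref{eqn:W0-growth-condition} are $+\infty$, since $J(\bar{\bm F})=0$.

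Next I split the $p$-th power. The elementary inequality
\begin{align*}
c_p(a^{p/2}+b^{p/2})\leq(a+b)^{p/2}\leq C_p(a^{p/2}+b^{p/2}),\qquad a,b\geq0,
\end{align*}
follows from concavity when $p\leq2$ and convexity when $p\geq2$. Applying it with $a=\abs{\bar{\bm F}}^2$ and $b=\abs{\bm z}^2$, and inserting the result into~\eqref{itm:dimension-reduction-growth}, reduces both bounds to the scalar problem
\begin{align*}
m(J):=\inf_{\bm n\cdot\bm z>0}\Big(\abs{\bm z}^p+\frac{1}{(J\,\bm n\cdot\bm z)^s}\Big),\qquad J:=J(\bar{\bm F}).
\end{align*}
Replacing $\bm z$ by its normal component $(\bm n\cdot\bm z)\bm n$ only decreases $\abs{\bm z}$ and leaves the second term unchanged. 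Hence $m(J)=\inf_{t>0}\big(t^p+J^{-s}t^{-s}\big)$. This is consistent with Lemma~\ref{lem:solution-to-optimization-problem} applied with $\bm y=J\bm n$: the critical point is parallel to $\bm n$ and satisfies $t^{p+s}=\frac{s}{p}J^{-s}$. Substituting $t=\tau J^{-s/(p+s)}$ shows
\begin{align*}
m(J)=\kappa\,J^{-\frac{ps}{p+s}},\qquad \kappa:=\inf_{\tau>0}\big(\tau^p+\tau^{-s}\big).
\end{align*}
Since one of $\tau^p,\tau^{-s}$ is always at least $1$, we have $\kappa\geq1$. Taking $\tau=1$ gives $\kappa\leq2$.

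For the lower bound, I use $c_p\leq1$ and $\kappa\geq1$:
\begin{align*}
W\geq C_1\big(c_p\abs{\bar{\bm F}}^p+c_p\abs{\bm z}^p+(J\bm n\cdot\bm z)^{-s}-C_2\big)\geq C_1\big(c_p\abs{\bar{\bm F}}^p+c_p\,\kappa J^{-q}-C_2\big).
\end{align*}
Taking the infimum over admissible $\bm z$ gives the left inequality of~\eqref{eqn:W0-growth-condition}. For the upper bound, I test with $\bm z=J^{-s/(p+s)}\bm n$. Then $\abs{\bm z}^p=(J\bm n\cdot\bm z)^{-s}=J^{-q}$, so
\begin{align*}
W_0\leq C_3\big(C_p\abs{\bar{\bm F}}^p+(C_p+1)J^{-q}+C_4\big)\leq 2C_3\big(C_p(\abs{\bar{\bm F}}^p+J^{-q})+C_4\big),
\end{align*}
where the last step uses $C_p\geq1$.

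The remaining, and most delicate, part is the Carath\'eodory property. For measurability in $x'$, I restrict the infimum to a countable dense subset of the open admissible set $\{\bm n\cdot\bm z>0\}$. This is legitimate because $W(x',\cdot)$ is continuous and finite on $\{\det>0\}$. For continuity in $\bar{\bm F}$ I treat two cases.
\begin{itemize}
\item At a point of rank two: $W_0$ is upper semicontinuous as an infimum of functions that are continuous near $\bar{\bm F}$. For lower semicontinuity, the lower bound in~\eqref{itm:dimension-reduction-growth} confines near-minimizing $\bm z$ to a compact set on which $\det(\bar{\bm F}|\bm z)$ stays bounded away from zero, locally uniformly in $\bar{\bm F}$. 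Compactness plus continuity of $W(x',\cdot)$ then gives lower semicontinuity.
\item At a point of rank less than two: the already-proved lower bound forces $W_0(x',\bar{\bm F}_k)\to+\infty$ as $J(\bar{\bm F}_k)\to0$. This gives continuity in the extended sense into $[0,+\infty]$.
\end{itemize}
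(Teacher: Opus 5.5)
Your proof is correct. Its skeleton matches the paper's: both write $\det(\bar{\bm F}|\bm z)=\bm m\cdot\bm z$ with $|\bm m|=J(\bar{\bm F})$ (your $J\bm n$), both split $|(\bar{\bm F}|\bm z)|^p$ using $c_p,C_p$, and both reduce the bounds to minimizing $|\bm z|^p+(\bm m\cdot\bm z)^{-s}$.

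The difference is in how that auxiliary problem is handled. The paper writes down the Euler--Lagrange equation and uses Lemma~\ref{lem:solution-to-optimization-problem} to show that the critical point is parallel to $\bm m$. It then evaluates the exact value $\frac{p+s}{p}\bigl(\frac{pc_p}{s}\bigr)^{s/(p+s)}J^{-q}$ (and its analogue with $C_p$), and checks separately that these constants lie between $c_p$ and $2C_p$. Along the way it tacitly assumes that a minimizer exists, so that the first-order condition applies.

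You instead get the parallel structure directly, by projecting $\bm z$ onto $\bm n$. You extract $J^{-q}$ by scaling and need only the trivial bounds $1\le\kappa\le 2$. For the upper bound you simply test $\bm z=J^{-s/(p+s)}\bm n$. So you need no differentiation and no existence argument. The only cost is that you don't identify the sharp constant of the auxiliary problem, which the statement does not require.

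Your argument is also more complete than the paper's. The paper's proof never addresses the Carath\'eodory claim or the rank-deficient case. You fill that gap correctly:
\begin{itemize}
\item measurability in $x'$ by restricting the infimum to a countable dense subset of the open half-space;
\item upper semicontinuity at rank-two matrices as an infimum;
\item lower semicontinuity there via compactness of near-minimizers, with $\det(\bar{\bm F}|\bm z)$ bounded below, both coming from \eqref{itm:dimension-reduction-growth};
\item divergence to $+\infty$ at rank-deficient matrices, from the lower bound you have already proved.
\end{itemize}
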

\begin{proof}
    Consider $\bm F=(\bar{\bm F}|\bm z)\in\RR^{3\times3}$, where $\bar{\bm F}\in\RR^{3\times2}$ and $\bm z\in\RR^3$. Let the $2\times2$ subdeterminants of $\bar{\bm F}$ be denoted
    \begin{align}\label{eqn:subdeterminants3x2}
        m_1 = \det\begin{pmatrix}
            F_{21}&F_{22}\\
            F_{31}&F_{32}
        \end{pmatrix}\quad m_2 = -\det\begin{pmatrix}
            F_{11}&F_{12}\\
            F_{31}&F_{32}
        \end{pmatrix}\quad m_3 = \det\begin{pmatrix}
            F_{11}&F_{12}\\
            F_{21}&F_{22}
        \end{pmatrix}.
    \end{align}
    Let $\bm m=(m_1,m_2,m_3)^T$, then $\det\bm F = \bm m\cdot \bm z$. Moreover, 
    \begin{align*}
        J(\bar{\bm F}):=\left(\det\bar{\bm F}^T\bar{\bm F}\right)^{1/2}=\abs{\bm m}=\abs{\bm F\bm e_1\times\bm F\bm e_2}.
    \end{align*}
    We recall that for all $a,b \geq 0$ we have
    \[c_p (a^p+b^p)\leq (a^2+b^2)^{p/2} \leq C_p (a^p+b^p).\]
    
    Thus from~\eqref{itm:local-injectivity},
    \begin{align}\label{eq:W_0-lower-bound}
        W_0(\cdot, \bar{\bm F}) &=: \inf_{\bm z \in \RR^3} \left\{W(\cdot,(\bar{\bm F}|\bm z)), \ \det (\bar{\bm F}|\bm z) > 0\right\}\nonumber\\
				& \geq C_1\inf_{\bm z \in \RR^3}  \left\{\frac{1}{(\bm m\cdot \bm z)^s} + c_p(\abs{\bar{\bm F}}^p + \abs{\bm z}^p) - C_2 , \ \det (\bar{\bm F}|\bm z) > 0\right\}\nonumber\\
				& = C_1\inf_{\bm z \in \RR^3}  \left\{\frac{1}{(\bm m\cdot \bm z)^s}  + c_p\abs{\bm z}^p , \ \det (\bar{\bm F}|\bm z) > 0\right\} + C_1(c_p|\bar{\bm F}|^p - C_2).
    \end{align}
    A direct computation shows that if $\bm z$ optimizes the above, then
    \begin{align*}
        p c_p \abs{\bm z}^{p-2} \bm z = s (\bm m\cdot \bm z)^{-s-1}\bm m
    \end{align*}
    and by Lemma~\ref{lem:solution-to-optimization-problem},
    \begin{align*}
        \bm z = \bm m \left(\frac{s}{p c_p}\right)^{\frac{1}{p+s}} \abs{\bm m}^{\frac{-p-2s}{p+s}}.
    \end{align*}
    Accordingly
    \[|\bm z|^p = \left(\frac{s}{p c_p}\right)^{\frac{p}{p+s}} \abs{\bm m}^{\frac{-ps}{p+s}}, \qquad (\bm m \cdot \bm z)^{-s} = \left(\frac{s}{p c_p}\right)^{\frac{-s}{p+s}} \abs{\bm m}^{\frac{-ps}{p+s}}.\]
    Using the above in~\eqref{eq:W_0-lower-bound},
    \begin{align*}
        W_0(\cdot,\bar{\bm F})&\geq C_1\left[\left(c_p\left(\frac{s}{p c_p}\right)^{\frac{p}{p+s}}+\left(\frac{s}{p c_p}\right)^{-\frac{s}{p+s}}\right)\frac{1}{\abs{\bm m}^{\frac{ps}{p+s}}}+c_p\abs{\bar{\bm F}}^p-C_2\right]\\
        &=C_1\left[\frac{p+s}{p}\left(\frac{p c_p}{s}\right)^{s/(p+s)}\abs{\bm m}^{-\frac{ps}{p+s}}+c_p\abs{\bar{\bm F}}^p-C_2\right]\\
        &\geq C_1 \left(c_p\left(\frac{1}{\abs{\bm m}^{\frac{ps}{p+s}}}+\abs{\bar{\bm F}}^p\right)-C_2\right)
    \end{align*}
    since
    \[\frac{p+s}{p}\left(\frac{p c_p}{s}\right)^{s/(p+s)} \geq c_p \qquad \forall p,s > 0.\]
    For the upper bound, reasoning as before, we obtain
\begin{align*}
    W_0(\cdot,\bar{\bm F}) &\leq C_3\left[\frac{p+s}{p}\left(\frac{p C_p}{s}\right)^{s/(p+s)}\abs{\bm m}^{-\frac{ps}{p+s}}+C_p\abs{\bar{\bm F}}^p+C_4\right]\\
    & \leq C_3\left[2C_p \abs{\bm m}^{-\frac{ps}{p+s}}+C_p\abs{\bar{\bm F}}^p+C_4\right]\\
    &\leq 2 C_3\left[C_p \left(\abs{\bm m}^{-\frac{ps}{p+s}}+\abs{\bar{\bm F}}^p\right)+C_4\right]
\end{align*}
since
\[\frac{p+s}{p}\left(\frac{p C_p}{s}\right)^{s/(p+s)}\leq 2C_p \qquad \forall p,s > 0.\]
\end{proof}

\sloppy
\begin{remark}\label{rem:examples-of-polyconvex-neohookean}
    We point out that for some examples of $W$ satisfying~\eqref{itm:local-injectivity},~\eqref{itm:dimension-reduction-growth} and polyconvexity, the corresponding dimensionally reduced function $W_0$ is convex in the pair $(\bar{\bm F},J)$, which is the membrane convexity condition identified in~\cite{healey2025nonlinearly}. Consider for example, the compressible neo-Hookean bulk energy 
    \begin{align*}
        W(\bm F)=\begin{cases}
            a\abs{\bm F}^2+b(\det\bm F)^{-s} &\det\bm F>0\\
            +\infty &\det\bm F\leq 0
        \end{cases}
    \end{align*}
    with $a,b>0$ and $s>1$. A simple calculation shows that $W_0(\bar{\bm F})=a_0\abs{\bar{\bm F}}^2+b_0\abs{J}^{-\frac{2s}{s+2}}$ with $a_0,b_0>0$, which is clearly convex in the pair $(\bar{\bm F},J)$.
\end{remark}
\fussy

\subsection{Compactness}

We now prove that sequences of bulk deformations with bounded energy as $\e\rightarrow0$ are precompact in the sense of nondegenerate gradient Young measures. Note that item (i) below corresponds to item (i) of Theorem \ref{thm:gc}. Recall that $\mathsf{R}$ denotes the average-projection operator defined in Section~\ref{sec:main-results}.

\begin{proposition}\label{thm:Young-measure-compactness}
    Let $\e_k\rightarrow 0$ and recall that $\Omega_1=\Omega\times\left(-\frac{1}{2},\frac{1}{2}\right)\subset\RR^3$. Suppose $\{\mu^k\}\subset L^\infty_w(\Omega_1,\mathcal{M}(\RR^{3\times3}))$ is a sequence such that $\sup_k \mathcal{E}_{\e_k}[\mu^k]<+\infty$. Since $\mathcal{E}_{\e_k}[\mu^k]<+\infty$, there exists a sequence of underlying deformations, $\{\bm f^k\}\subset\mathcal{A}_{\e_k}$ such that $\mu^k=\delta_{\nabla_{\varepsilon_k}\bm f^k}$. Then we have the following:
    \begin{enumerate}
        \item[(i)] There exists $\mu\in L^\infty_w(\Omega_1,\mathcal{M}(\RR^{3\times3}))$ and a (not relabelled) subsequence $\{\mu^k\}$ such that
        \begin{align*}
            \mu^k\xrightharpoonup{*}\mu\text{ in }L^\infty_w(\Omega_1,\mathcal{M}(\RR^{3\times 3})),
        \end{align*}
        and
        \begin{align*}
            \nu:=\mathsf{R}\mu\in\mathcal{JY}^{p,-q}(\Omega,\RR^{3\times2}),
        \end{align*}
        with $q=\frac{ps}{p+s}>0$.

        \item[(ii)] There exists $\bm f\in W^{1,p}(\Omega_1,\RR^3)$ such that for a subsequence of underlying deformations (not relabelled), $\{\bm f^k\}$ we have
        \begin{align*}
            \bm f^k\weakarrow\bm f\text{ in }W^{1,p}(\Omega_1,\RR^3)\\
            \nabla_{\e_k}\bm f^k\weakarrow(\bar{\nabla}\bm f|\bm d)\text{ in }L^p(\Omega_1,\RR^{3\times 3}),
        \end{align*}
        where $\bar{\nabla}=\left(\diffp{{}}{{x_1}},\diffp{{}}{{x_2}}\right)$ is the 2D gradient and $\bm d\in L^p(\Omega_1,\RR^3)$, referred to as a \textit{Cosserat vector}, cannot be identified in terms of the limit deformation $\bm f$. Moreover,
        \begin{align*}
            \innerpdt{\nu,\text{id}}=\bar{\nabla}\bm f.
        \end{align*}

        \item[(iii)] In addition to the above convergence, if $p>2$, we have
        \begin{align*}
            \bm f_{,1}^k\times\bm f_{,2}^k\weakarrow\bm f_{,1}\times\bm f_{,2}\text{ in }L^{p/2}(\Omega_1,\RR^3)
        \end{align*}
        and if $p>3$, then
        \begin{align*}
            \det\nabla_{\e_k}\bm f^k\weakarrow\alpha>0\text{ in }L^{p/3}(\Omega_1)
        \end{align*}
    \end{enumerate} 
\end{proposition}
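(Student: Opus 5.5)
We start from the uniform energy bound $\sup_k\int_{\Omega_1}W(x',\nabla_{\e_k}\bm f^k)\dif x\le C$. By the lower bound in \eqref{itm:dimension-reduction-growth}, this gives
\begin{align*}
\sup_k\int_{\Omega_1}\Bigl(\abs{\nabla_{\e_k}\bm f^k}^p+(\det\nabla_{\e_k}\bm f^k)^{-s}\Bigr)\dif x<\infty .
\end{align*}

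\textbf{Item (ii).} Since $\abs{\nabla\bm f^k}\le\abs{\nabla_{\e_k}\bm f^k}$ for $\e_k\le1$, the gradients are bounded in $L^p$. We normalize $\bm f^k$ to have mean zero and apply Poincaré's inequality together with reflexivity of $W^{1,p}$. This yields a subsequence with $\bm f^k\weakarrow\bm f$ in $W^{1,p}$ and $\nabla_{\e_k}\bm f^k\weakarrow(\bar{\nabla}\bm f|\bm d)$ in $L^p$, where the first two columns are identified because they coincide with $\bar\nabla\bm f^k$. The third column satisfies $\partial_3\bm f^k=\e_k(\nabla_{\e_k}\bm f^k)e_3\to0$ in $L^p$, so $\bm f$ is independent of $x_3$.

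\textbf{Item (i).} The measures $\mu^k=\delta_{\nabla_{\e_k}\bm f^k}$ are bounded in $L^\infty_w$, so the fundamental theorem of Young measures (weak$^*$ compactness, with tightness from the $L^p$ bound) gives $\mu^k\weakstar\mu$, with $\mu$ a Young measure. By continuity of $\mathsf R$, we get $\mathsf R\mu^k\weakstar\nu:=\mathsf R\mu$, and $\mathsf R\mu^k$ is the $x_3$-average of $\delta_{\bar\nabla\bm f^k(x',x_3)}$.

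To show $\nu\in\mathcal{JY}^{p,-q}$ we need three facts:
\begin{enumerate}
\item[(a)] \emph{Integrability.} Proposition~\ref{prop:W0-growth-condition} gives $W_0\ge C_1(c_p\Phi-C_2)$ with $q=ps/(p+s)$, and $W(x',\nabla_{\e_k}\bm f^k)\ge W_0(x',\bar\nabla\bm f^k)$. Hence $\int_{\Omega_1}\Phi(\bar\nabla\bm f^k)\le C$. Since $\Phi$ is nonnegative and lower semicontinuous, the lower semicontinuity of Theorem~\ref{thm:fundamental-property-of-Young-measures} passes this bound to $\mu$ and then, via Fubini in $x_3$, to $\int_\Omega\langle\nu_{x'},\Phi\rangle<\infty$. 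To apply that theorem to a lower semicontinuous rather than Carathéodory integrand, we approximate $\Phi$ from below by the truncations $\Phi_\e$, which are continuous, and pass to the limit by monotone convergence.
\item[(b)] \emph{Barycenter.} Lemma~\ref{lem:weak-convergence-barycenter} applied on $\Omega_1$, followed by averaging in $x_3$, gives $[\nu_{x'}]=\bar\nabla\bm f$.
\item[(c)] \emph{$\nu$ is a $p$-gradient Young measure.} This is the main obstacle, because $\nu$ is an average over $x_3$ of Young measures generated by the slices $\bar\nabla\bm f^k(\cdot,x_3)$, not a measure directly generated by 2D gradients. I would use the Kinderlehrer--Pedregal characterization, Theorem~\ref{thm:kinderlehrer-pedregal}. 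Conditions (i) and (ii) follow from (a) and (b). For (iii), take a quasiconvex $g$ with $p$-growth. For a.e. $x_3$, the slice $\mu_{(\cdot,x_3)}$ projected by $\mathsf P$ should be a 2D gradient Young measure with barycenter $\bar\nabla\bm f$. Then Jensen's inequality for $g$ holds on each slice, and averaging in $x_3$ gives $\langle\nu_{x'},g\rangle\ge g(\bar\nabla\bm f)$. To avoid the delicate slicing, an alternative is to test directly: for $\varphi\in C_c(\Omega)$ with $\varphi\ge0$, weak lower semicontinuity of $\int\varphi\,g(\bar\nabla\bm f^k)$ on each slice, via Fubini, Fatou, and $p$-equiintegrability after a decomposition-lemma truncation, yields $\int\varphi\langle\nu,g\rangle\ge\int\varphi\, g(\bar\nabla\bm f)$.
\end{enumerate}

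\textbf{Item (iii).} For $p>2$, each component of $\bm f^k_{,1}\times\bm f^k_{,2}$ is a $2\times2$ minor of the in-plane gradient $\bar\nabla\bm f^k$. These minors are weakly continuous under $W^{1,p}$ weak convergence: write them in divergence form and use compactness of $\bm f^k\to\bm f$ in $L^p$, restricted to 2D slices or viewed as a minor of the 3D gradient in the $x_1,x_2$ variables. This gives convergence to $\bm f_{,1}\times\bm f_{,2}$ in $L^{p/2}$.

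For $p>3$, $\det\nabla_{\e_k}\bm f^k$ is bounded in $L^{p/3}$, so a subsequence converges weakly to some $\alpha\ge0$. To obtain $\alpha>0$ a.e., observe that $(\det\nabla_{\e_k}\bm f^k)^{-s}$ is bounded in $L^1$. Since $t\mapsto t^{-s}$ is convex on $(0,\infty)$, weak lower semicontinuity (equivalently, Jensen applied to the Young measure of the determinants) gives $\int\alpha^{-s}\le\liminf\int(\det\nabla_{\e_k}\bm f^k)^{-s}<\infty$, hence $\alpha>0$ a.e.
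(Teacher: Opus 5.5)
Outside step (c), your argument is the paper's: coercivity and Poincar\'e, weak compactness, $W(x',\nabla_{\varepsilon_k}\bm f^k)\ge W_0(x',\bar\nabla\bm f^k)\ge C_1(c_p\Phi(\bar\nabla\bm f^k)-C_2)$ plus lower semicontinuity, the barycenter via Lemma~\ref{lem:weak-convergence-barycenter}, weak continuity of $2\times2$ minors, and a convexity argument for $\alpha>0$. Your use of $t\mapsto t^{-s}$, extended by $+\infty$ on $(-\infty,0]$, is a slightly leaner version of the paper's $t^r+t^{-r}$. The paper does not prove (c) at all; it defers to \cite[Lemma 8.1]{freddi_energy_2004}.

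Your own sketch of (c) has a gap. The first route only asserts that for a.e.\ $x_3$ the slice $x'\mapsto\mathsf P_\#\mu_{(x',x_3)}$ is a 2D gradient Young measure. The natural generating candidate, $\bar\nabla\bm f^k(\cdot,x_3)$, need not generate it. Take a fine in-plane oscillation on a scale $\eta_k\ll\varepsilon_k$ whose profile changes in $x_3$ on the scale $\eta_k/\varepsilon_k\to0$. This is compatible with the bound on $\varepsilon_k^{-1}\partial_3\bm f^k$, yet the slice sequences keep changing with $k$ while $\mu$ records the $x_3$-mixture. Proving the claim through Theorem~\ref{thm:kinderlehrer-pedregal} instead needs exactly the Jensen inequality you are after, so that is circular. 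The second route fails for a related reason. Equiintegrability of $\{\abs{\nabla\bm g^k}^p\}$ on $\Omega_1$ gives, for a fixed $x_3$, neither boundedness nor equiintegrability of $\bar\nabla\bm g^k(\cdot,x_3)$ in $L^p(\Omega)$ uniformly in $k$. Without that, slice-wise lower semicontinuity of $\int_\Omega\varphi\,g$ is unavailable for a quasiconvex $g$ whose negative part has full $p$-growth. So the Fatou argument in $x_3$ cannot be closed with the ingredients you list.

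The missing idea is to lift $g$ instead of slicing the sequence. Let $g$ be quasiconvex on $\RR^{3\times2}$ with $\abs{g}\le C(1+\abs{\cdot}^p)$. Then $G:=g\circ\mathsf P$ is quasiconvex on $\RR^{3\times3}$ with the same growth. Indeed, for $\bm\phi\in W^{1,\infty}_0((0,1)^3,\RR^3)$ every slice $\bm\phi(\cdot,x_3)$ lies in $W^{1,\infty}_0((0,1)^2,\RR^3)$, and integrating the 2D inequality in $x_3$ gives $\int G(\bm F+\nabla\bm\phi)\ge G(\bm F)$. Next, $\partial_3\bm f^k=\varepsilon_k\nabla_{\varepsilon_k}\bm f^k e_3\to0$ in $L^p$. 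By the argument of Lemma~\ref{lemma:strong-approximation-YM-and-equiintegrability}(i), the unscaled gradients $\nabla\bm f^k$ therefore generate $\tilde\mu_x:=\iota_\#\mathsf P_\#\mu_x$, where $\iota(\bar{\bm F}):=(\bar{\bm F}|\bm 0)$. This is a $p$-gradient Young measure on $\Omega_1$ with barycenter $(\bar\nabla\bm f|\bm 0)$. The necessity half of Theorem~\ref{thm:kinderlehrer-pedregal} on $\Omega_1$ then gives $\langle\mathsf P_\#\mu_x,g\rangle=\langle\tilde\mu_x,G\rangle\ge g(\bar\nabla\bm f(x'))$ for a.e.\ $x\in\Omega_1$. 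Averaging in $x_3$ yields condition (iii) for $\nu$, and you already have (i) and (ii).
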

\begin{proof}
    From~\eqref{itm:dimension-reduction-growth}, the bound $\abs{\nabla\bm f^k}\le\abs{\nabla_{\e_k}\bm f^k}$, and the Poincar\'e inequality (recall that we assume $\fint_{\Omega_1}\bm f^k\dif x=0$), we have
    \begin{align}\label{eqn:dimension-reduction-coercivity-estimate}
        \mathcal{E}_{\e_k}[\mu^k] &= \int_{\Omega_1} W(x', \nabla_{\e_k} \bm f^k) \ dx \nonumber\\
				&\geq C_1\left(\norm{\nabla_{\e_k} \bm f^k}^p_{L^p(\Omega_1,\RR^{3\times 3})} +  \norm{\frac{1}{\det \nabla_{\e_k} \bm f^k}}^s_{L^s(\Omega_1)}  - C_2\right) \nonumber\\
				&\geq C_1\left(\norm{\nabla \bm f^k}^p_{L^p(\Omega_1,\RR^{3\times 3})} + \norm{\frac{1}{\det \nabla_{\e_k} \bm f^k}}^s_{L^s(\Omega_1)}  - C_2\right) \nonumber\\
				&\geq C\norm{ \bm f^k}^p_{W^{1,p}(\Omega_1,\RR^3)} + C_1 \norm{\frac{1}{\det \nabla_{\e_k} \bm f^k}}^s_{L^s(\Omega_1)}  - C_1C_2.
    \end{align}
    Since $p,s>1$, there exist $\bm D\in L^p(\Omega_1,\RR^{3\times 3})$ and $\beta\in L^s(\Omega_1)$, such that up to subsequences,
    \begin{align*}
        \nabla_{\e_k}\bm f^k\weakarrow\bm D&\text{ in }L^p(\Omega_1,\RR^{3\times 3})\\
        \frac{1}{\det\nabla_{\e_k}\bm f^k}=\frac{\e_k}{\det\nabla\bm f^k}\weakarrow\beta&\text{ in }L^s(\Omega_1).
    \end{align*}
    Moreover, since $\sup_k\norm{\bm f^k}^p_{W^{1,p}(\Omega_1,\RR^3)}<+\infty$ we have (up to subsequences),
    \begin{align*}
        \bm f^k\weakarrow\bm f\text{ in }W^{1,p}(\Omega_1,\RR^3)&\implies\bm f^k\rightarrow\bm f\text{ in }L^p(\Omega_1,\RR^3)\text{ by compact embedding,}\\
        \frac{1}{\e_k}\bm f^k_{,3}\weakarrow\bm d\text{ in }L^p(\Omega_1,\RR^3)&\implies
        \bm f^k_{,3}\rightarrow 0\text{ in }L^p(\Omega_1,\RR^3)
    \end{align*}
    Thus, we deduce that $\bm f$ is independent of $x_3$, i.e. $\bm f\in W^{1,p}(\Omega,\RR^3)$. Also,
    \begin{align*}
        \bm D = (\bar{\nabla}\bm f|\bm d),
    \end{align*}
    where $\bm d\in L^p(\Omega_1,\RR^3)$ cannot be identified in terms of $\bm f$.

    From the Banach-Alaoglu theorem, we have
    \begin{align*}
        \mu^k\xrightharpoonup{*}\mu\text{ in }L^\infty_w(\Omega_1,\mathcal{M}(\RR^{3\times3})).
    \end{align*}
    From continuity, 
    \begin{align*}
        \mathsf{R}\mu^k \xrightharpoonup{*} \mathsf{R}\mu =: \nu \text{ in } L_w^{\infty}(\Omega, \mathcal{M}(\RR^{3\times 2})).
    \end{align*} 
    It can be shown that $\nu$ is a $p$-gradient Young measure by manually checking the conditions of Theorem~\ref{thm:kinderlehrer-pedregal}. We omit the computation which is carried out in~\cite[Lemma 8.1]{freddi_energy_2004}. We now show that $\nu\in \mathcal{JY}^{p,-q}(\Omega, \RR^{3\times 2})$. Since $\nabla_{\e_k}\bm f^k=\left(\bar{\nabla}\bm f^k|\frac{1}{\e_k}\partial_3\bm f^k\right)$ and $\bar\nabla\bm f^k = \mathsf{P}\nabla\bm f^k$, the definition of $W_0$ and Proposition~\ref{prop:W0-growth-condition} give
    \begin{align*}
        C>\int_{\Omega_1} W(x', \nabla_{\e_k} \bm f^k) \dif x&\geq\int_{\Omega_1} W_0(x', \mathsf{P}\nabla\bm f^k) \dif x\\
        &\geq C_1\left(c_p\int_{\Omega_1}\Phi(\mathsf{P}\nabla\bm f^k)\dif x-C_2\right),
    \end{align*}
    where we recall that $\Phi(\bar{\bm F})=\abs{\bar{\bm F}}^p+\abs{J(\bar{\bm F})}^{-q}$. Since $\{\nabla_{\e_k}\bm f^k\}$ generates $\mu$ and $\int\Phi(\mathsf{P}\bm F)\dif \mu_x^k=\int\Phi\dif(\mathsf{P}_{\#}\mu_x^k)$, we conclude that 
    \begin{align*}
        \int_{\Omega_1}\int_{\RR^{3\times 2}} \Phi(\bm F) \dif(\mathsf{P}_{\#}\mu^k_x)(\bm F)\dif x\leq\tilde{C}<+\infty,
    \end{align*}
    for some constant $\tilde{C}>0$. Taking the $\liminf$ of the above and using the continuity of the projection/average operators and Theorem~\ref{thm:fundamental-property-of-Young-measures},
    \begin{align*}
        +\infty&>\liminf_{k\rightarrow +\infty}\int_\Omega\int_{-1/2}^{1/2}\int_{\RR^{3\times 2}}\Phi(\bm F)\dif(\mathsf{P}_{\#}\mu^k_{(x',x_3)})(\bm F)\ d x_3\ d x' \\
        &=\liminf_{k\rightarrow +\infty}\int_\Omega\int_{\RR^{3\times 2}}\Phi(\bm F)\dif(\mathsf{R}\mu^k)_{x'}(\bm F)\ d x'\\
        &\geq \int_\Omega\int_{\RR^{3\times 2}}\Phi(\bm F)\dif\nu_{x'}(\bm F)\ dx'.
    \end{align*}

    It remains to show that $\langle \nu,\id\rangle=\overline{\nabla}\bm f$. This follows from the relation
    \begin{align*}
        \langle\mathsf{P}_\#\mu^k_{(x',x_3)},\id\rangle=\langle\mu^k_{(x',x_3)},\id\circ\mathsf{P}\rangle=\int_{\RR^{3\times3}}\mathsf{P}(A)\ d(\delta_{\nabla_{\e_k} \bm f^k})(A)=\overline{\nabla}\bm f^k.
    \end{align*}
    Sending $k\rightarrow \infty$ in the above equation, we get $\langle\mathsf{P}_\#\mu_{(x',x_3)},\id\rangle=\overline{\nabla}\bm f$ (see Lemma~\ref{lem:weak-convergence-barycenter}). Averaging this in $x_3$ gives the barycenter condition. This concludes the proof of (i) and (ii).

    We turn our attention to (iii). The convergence of $\left\{\bm f_{,1}^k\times\bm f_{,2}^k\right\}$ follows easily from the observation that each component of the vector is a $2\times 2$ subdeterminant of the form~\eqref{eqn:subdeterminants3x2} of the sequence $\bar{\nabla}\bm f^k$. Since $p>2$, we get weak convergence in $L^{p/2}$ from~\cite[Lemma 1.14]{dacorogna_direct_2007}.
    
    For a matrix $\bm F\in\RR^{3\times3}$ with singular values $\sigma_1,\sigma_2,\sigma_3$ we have from the AM-GM inequality, 
    \begin{align*}
        \abs{\det\bm F}^{1/3}=\left(\sigma_1\sigma_2\sigma_3\right)^{1/3}\leq\frac{\sigma_1+\sigma_2+\sigma_3}{3}\leq\frac{\abs{\bm F}}{\sqrt{3}}.
    \end{align*}
    Thus we have $\abs{\det\nabla_{\e_k}\bm f^k}^{p/3}\leq C\abs{\nabla_{\e_k}\bm f^k}^p$ for every $k$. Hence, $\sup_k\norm{\det\nabla_{\e_k}\bm f^k}_{L^{p/3}}^{p/3}<+\infty$ and if $p>3$, there exists $\alpha\in L^{p/3}(\Omega_1)$ such that, up to subsequences,
    \begin{align*}
        \det\nabla_{\e_k}\bm f^k\weakarrow\alpha\text{ in }L^{p/3}(\Omega_1).
    \end{align*}
    Since $\det\nabla_{\e_k}\bm f^k>0$ a.e., we conclude that $\alpha\geq0$. From~\eqref{eqn:dimension-reduction-coercivity-estimate}, we also have $\sup_{k}\norm{\frac{1}{\det\nabla_{\e_k}\bm f^k}}_{L^s(\Omega_1)}^s<+\infty$ and thus for $r:=\min\{s,p/3\}$,
    \begin{align*}
        \sup_{k}\int_{\Omega_1}\frac{1}{(\det\nabla_{\e_k}\bm f^k)^r}+(\det\nabla_{\e_k}\bm f^k)^r\dif x<+\infty.
    \end{align*}
    Since $t\mapsto t^r+t^{-r}$ is strictly convex for $t>0$, we get
    \begin{align*}
        +\infty>\liminf_{k\rightarrow\infty}\int_{\Omega_1}\frac{1}{(\det\nabla_{\e_k}\bm f^k)^r}+(\det\nabla_{\e_k}\bm f^k)^r\dif x\geq\int_{\Omega_1}\frac{1}{\alpha^r}+\alpha^r\dif x.
    \end{align*}
    Thus, $\alpha>0$ almost everywhere in $\Omega_1$.
\end{proof}

We proceed to prove Theorem~\ref{thm:gc}. Since compactness (item (i)) was already proved, we will prove only the liminf and limsup inequalities (items (ii) and (iii)).

\subsection{\texorpdfstring{$\Gamma$}{Gamma}-convergence}

\begin{proof}[Proof of Theorem \ref{thm:gc}]
       		\textit{(ii) Liminf inequality}
 We may assume that the left-hand side of the inequality is finite and that (by passing through a subsequence, but not relabeling) the $\liminf$ is a limit for the sequence $\{\mu_{n}\}$. Then, $\sup_n \mathcal{E}_{\e_n}[\mu_{n}]<+\infty$, and from Proposition~\ref{thm:Young-measure-compactness} there exists $\mu\in L_w^\infty(\Omega_1,\mathcal{M}(\bbR^{3\times3}))$ with (not relabelled subsequence) $\mu_{n}\weakstar\mu$ weakly* in $L^\infty_w(\Omega_1,\mathcal{M}(\bbR^{3\times3}))$. Weak* convergence and the continuity of $\mathsf{R}$ imply that $\nu=\mathsf{R}(\mu)\in \mathcal{JY}^{p,-q}(\Omega,\bbR^{3\times 2})$. The underlying deformations also converge weakly:
\[
    \bm f_{n}\weak \bm f\equiv \bm f(x')\quad\text{in }W^{1,p}(\Omega_1,\bbR^3).
\] The liminf inequality follows from a similar computation as in the proof of compactness in Proposition~\ref{thm:Young-measure-compactness}.

\textit{(iii) Recovery sequence}
Using Proposition~\ref{prop:smooth-YM-generation} we obtain a sequence $\{\bm u^k\}_{k=1}^\infty\subset C^{\infty}(\bar{\Omega},\RR^3)$ with $\{\nabla \bm u^k\}$  $p$-equiintegrable and satisfying~\eqref{eq:Cinfty-Phi-equiintergability}. Our bulk energy $W(x',\cdot)$ satisfies the hypotheses in~\cite{anza_nonlinear_2008} for a.e. $x$, so we adapt some of the arguments presented there. We define the set-valued function
\begin{align*}
    \Lambda_k^j(x'):=\left\{\bm\xi\in\RR^3:\det(\nabla \bm u^k(x')|\bm\xi)\geq\frac{1}{j}\right\}.
\end{align*}
Note that since $\bm u^k$ is smooth on $\bar{\Omega}$, there exists $\eta_k>0$ such that $J(\nabla \bm u^k)\geq\eta_k$ (extreme value theorem): $\Lambda^j_k(x')$ is thus non-empty for all $x'\in\bar{\Omega}$.

Define
\begin{align*}
    \Upsilon^j_k(x'):=\inf_{\bm \xi\in\Lambda_k^j(x')}W(x',(\nabla \bm u^k(x')|\bm \xi)).
\end{align*}

For fixed $k$, $x'\in\bar{\Omega}$, $\Lambda^j_k(x')$ is a monotonic sequence of sets, therefore, $\Upsilon^1_k\geq\Upsilon^2_k\geq...\geq 0$. Since $\bigcup_{j=1}^\infty\Lambda_k^j(x')=\left\{\bm \xi\in\RR^3:\det(\nabla \bm u^k(x')|\bm\xi)> 0\right\}$, we have pointwise
\begin{align*}
    \lim_{j\rightarrow\infty}\Upsilon^j_k(x')=\inf_{\bm\xi:\det(\nabla\bm u^k|\bm\xi)>0}W(x',(\nabla\bm u^k(x')|\bm \xi))=W_0(x',\nabla\bm u^k(x')),
\end{align*}
by the definition of $W_0$. Clearly $\bm m^k(x')=\frac{\partial_1\bm u^k\times\partial_2\bm u^k}{\abs{\partial_1\bm u^k\times\partial_2\bm u^k}^2}\in\Lambda_k^1(x')$, since $\det(\nabla\bm u^k|\bm m^k)=1$. We have $\abs{(\bar{\bm F}|\bm z)}^p\leq C_p\left(\abs{\bar{\bm F}}^p+\abs{\bm z}^p\right)$, and $\abs{\bm m^k}=J(\nabla\bm u^k)^{-1}$, so hypothesis~\ref{itm:dimension-reduction-growth} gives
\begin{align*}
    \Upsilon^1_k(x')\leq W(x',(\nabla\bm u^k(x')|\bm m^k(x')))&\leq C_3\left(\abs{(\nabla\bm u^k|\bm m^k)}^p+\frac{1}{\abs{\det(\nabla\bm u^k|\bm m^k)}^s}+C_4\right)\\
    &\leq C\left(\abs{\nabla\bm u^k}^p+\abs{J(\nabla\bm u^k)}^{-p}\right) + C\\
    &\leq C\left(\abs{\nabla\bm u^k}^p+\eta_k^{-p}\right) + C.
\end{align*}
Here $C>0$ changes from occurrence to occurrence. The right hand side above is integrable, so from the dominated convergence theorem and the monotonicity of $\Upsilon^j_k$ we have
\begin{align*}
\lim_{j\rightarrow\infty}\int_\Omega\Upsilon^j_k(x')\dif x'=\int_\Omega W_0(x',\nabla\bm u^k(x'))\dif x'.
\end{align*}
Now, pick $j_k$ large enough so that
\begin{align}\label{eq:recovery-sequence-interchange-step-1}
    \int_\Omega\Upsilon^{j_k}_k(x')\dif x'\leq\int_\Omega W_0(x',\nabla\bm u^k(x'))\dif x'+\frac{1}{2k}.
\end{align}
From the continuous selection lemma~\cite[Lemma 3.2]{anza_nonlinear_2008}, we can interchange the infimum and integral in the following way: 
\begin{align*}
    \inf_{\bm z\in C(\bar{\Omega},\Lambda^{j_k}_k)}\int_\Omega W(x',(\nabla \bm u^k(x')|\bm z(x')))\dif x' = \int_\Omega\Upsilon_k^{j_k}(x')\dif x'.
\end{align*}
Choose $\bm z^k(x')\in C(\bar{\Omega},\Lambda^{j_k}_k)$, so that
\begin{align*}
    \int_\Omega W(x',(\nabla\bm u^k(x')|\bm z^k(x')))\dif x'\leq\int_\Omega\Upsilon^{j_k}_k(x')\dif x'+\frac{1}{2k}.
\end{align*}
Combining the above with~\eqref{eq:recovery-sequence-interchange-step-1}:
\begin{align}\label{eq:recovery-sequence-interchange-step-2}
    \int_{\Omega} W(x',(\nabla\bm u^k(x')|\bm z^k(x')))\dif x'\leq\int_\Omega W_0(x',\nabla\bm u^k(x'))\dif x'+\frac{1}{k}.
\end{align}
Let $\{\tilde{\bm z}^k_l\}_{l=1}^\infty\subset C^\infty(\bar{\Omega},\RR^3)$ approximate $\bm z^k$ uniformly. Note that
\begin{align*}
    \det(\nabla\bm u^k|\tilde{\bm z}^k_l) &= \det(\nabla\bm u^k|\bm z^k)+\det(\nabla\bm u^k|\tilde{\bm z}^k_l-\bm z^k)\\
    &\geq\frac{1}{j_k}-\norm{J(\nabla\bm u^k)}_{L^\infty}\norm{\tilde{\bm z}^k_l-\bm z^k}_{L^\infty}.
\end{align*}
Since $\tilde{\bm z}^k_l\rightarrow\bm z^k$ uniformly, we have $\norm{\tilde{\bm z}^k_l-\bm z^k}_{L^\infty}\leq\left(2j_k\norm{J(\nabla\bm u^k)}_{L^\infty}\right)^{-1}$ for all $l$ large enough, and hence
\begin{align*}
    \det(\nabla\bm u^k|\tilde{\bm z}^k_l)\geq\frac{1}{2j_k}
\end{align*}
for such $l$. The growth condition~\eqref{itm:dimension-reduction-growth} then bounds $W(x',(\nabla\bm u^k|\tilde{\bm z}^k_l))$ uniformly in $l$ and $x'$, so by the continuity of $W(x',\cdot)$ for a.e. $x'\in\Omega$ and dominated convergence we may fix $l_k$ with
\begin{align*}
    \abs{\int_\Omega W(x',(\nabla\bm u^k|\tilde{\bm z}^k_{l_k}))\dif x'-\int_\Omega W(x',(\nabla\bm u^k|\bm z^k))\dif x'}\leq\frac{1}{k}.
\end{align*}
To simplify notation, let $\bm b^k:=\tilde{\bm z}^k_{l_k}$. Combining with~\eqref{eq:recovery-sequence-interchange-step-2}, we get
\begin{align}
    \label{eq:recovery-sequence-energy-estimate-1}
    \int_\Omega W(x',(\nabla\bm u^k|\bm b^k))\dif x'\leq\int_\Omega W_0(x',\nabla\bm u^k)\dif x'+\frac{2}{k}.
\end{align}
Let $\e_n\rightarrow0$ and define $\bm y^k_{\e_n}(x',x_3)=\bm u^k(x')+\e_n x_3\bm b^k(x')$ so that
\begin{align*}
    \nabla_{\e_n}\bm y^k_{\e_n}(x',x_3)=\left(\nabla\bm u^k(x')+\e_nx_3\nabla\bm b^k(x')|\bm b^k(x')\right).
\end{align*}
Notice that as $n\rightarrow\infty$,
\begin{align*}
    \nabla_{\e_n}\bm y^k_{\e_n}\rightarrow\left(\nabla\bm u^k|\bm b^k\right)\text{ uniformly.}
\end{align*}
Computing the determinant of $\nabla_{\e_n}\bm y^k_{\e_n}$:
\begin{align*}
    \det(\nabla_{\e_n}\bm y^k_{\e_n}) &= \left[(\partial_1\bm u^k+\e_nx_3\partial_1\bm b^k)\times(\partial_2\bm u^k+\e_nx_3\partial_2\bm b^k)\right]\cdot\bm b^k\\
    &=\det(\nabla\bm u^k|\bm b^k)+\e_n x_3(\partial_1\bm u^k\times\partial_2\bm b^k+\partial_1\bm b^k\times\partial_2\bm u^k)\cdot\bm b^k+(\e_nx_3)^2\det(\nabla\bm b^k|\bm b^k).
\end{align*}
The first term is bounded below by $\frac{1}{2j_k}$ and the second and third terms are bounded (since $\bm u^k$ and $\bm b^k$ are $C^\infty$ on $\bar{\Omega}$),
\begin{align*}
    \abs{x_3\e_n(\partial_1\bm u^k\times\partial_2\bm b^k+\partial_1\bm b^k\times\partial_2\bm u^k)\cdot\bm b^k+(\e_n x_3)^2\det(\nabla\bm b^k|\bm b^k)}\leq M_k\e_n.
\end{align*}
Hence, taking $n$ large enough that $\e_n\leq(4j_kM_k)^{-1}$,
\begin{align*}
    \det(\nabla_{\e_n}\bm y^k_{\e_n})\geq\frac{1}{2j_k}-M_k\e_n\geq\frac{1}{4j_k},
\end{align*}
so by~\eqref{itm:dimension-reduction-growth} the family $\{W(x',\nabla_{\e_n}\bm y^k_{\e_n})\}_n$ is bounded above uniformly in $n$ and $x$. Thus from dominated convergence,
\begin{align}\label{eq:recovery-sequence-energy-estimate-2}
    \lim_{n\rightarrow\infty}\int_{\Omega_1}W(x',\nabla_{\varepsilon_n}\bm y^k_{\e_n})\dif x=\int_\Omega W(x',(\nabla\bm u^k|\bm b^k))\dif x'.
\end{align}
We can now choose a diagonal sequence with the following scheme: define $k_1=1$ and
\begin{align*}
    k_{n+1}=\begin{cases}
        k_n+1\quad\text{if~\eqref{eq:retardation-sequence-condition} is true}\\
        k_n\quad\text{if not.}
    \end{cases}
\end{align*}
where
\begin{align}\label{eq:retardation-sequence-condition}
    \begin{cases}
        \int_{\Omega_1}W(x',\nabla_{\e_n}\bm y^{k_n}_{\e_n})\dif x\leq\int_\Omega W(x',(\nabla\bm u^{k_n}|\bm b^{k_n}))\dif x'+\frac{1}{k_n}\\
        \text{ and }\e_n\norm{\nabla \bm b^{k_n}}_{L^\infty}\leq\frac{1}{k_n}
    \end{cases}
\end{align}
The latter condition is required later when we consider Young measure convergence and since $\nabla\bm b^k$ is bounded on $\bar{\Omega}$, the condition can always be met.
From~\eqref{eq:recovery-sequence-energy-estimate-2}, it follows that $k_n\rightarrow\infty$. Then setting $\bm y_n:=\bm y^{k_n}_{\e_n}$ it follows from~\eqref{eq:recovery-sequence-energy-estimate-1} that
\begin{align*}
    \int_{\Omega_1}W(x',\nabla_{\e_n}\bm y_n)\dif x\leq\int_{\Omega}W_0(x',\nabla\bm u^{k_n})\dif x'+\frac{3}{k_n}.
\end{align*}
Since $\Phi(\nabla\bm u^{k_n})$ is equiintegrable, from Proposition~\ref{prop:W0-growth-condition}, so is $W_0(x',\nabla\bm u^{k_n})$. Taking the limsup as $n\rightarrow\infty$ gives
\begin{align*}
    \limsup_{n\rightarrow\infty}\int_{\Omega_1}W(x',\nabla_{\e_n}\bm y_n)\dif x\leq\lim_{n\rightarrow\infty}\int_{\Omega}W_0(x',\nabla\bm u^{k_n})\dif x'=\int_\Omega\int_{\RR^{3\times2}}W_0(x',\bm F)\dif\nu_{x'}(\bm F)\dif x'.
\end{align*}

Now set $\mu_n:=\delta_{\nabla_{\e_n}\bm y_n}$. We claim that $\mathsf{R}\mu_n\xrightharpoonup{*}\nu$ as measures. By definition, for $\phi\in C_c(\RR^{3\times2})$ and $g\in L^1(\Omega)$
\begin{align*}
    \int_\Omega\langle\mathsf{R}\mu_n,\phi\rangle g(x')\dif x'=\int_\Omega\int_{-1/2}^{1/2}\phi(\nabla\bm u^{k_n}+\e_n x_3\nabla\bm b^{k_n})g(x')\dif x_3\dif x'.
\end{align*}
Let $\omega_\phi$ denote the modulus of continuity of $\phi$. Then,
\begin{align*}
    &\int_\Omega\int_{-1/2}^{1/2}\abs{\phi(\nabla\bm u^{k_n}+\e_n x_3\nabla\bm b^{k_n})-\phi(\nabla\bm u^{k_n})}g(x')\dif x_3\dif x'\\
    &\leq\int_\Omega\int_{-1/2}^{1/2}\omega_{\phi}\left(\e_n\abs{x_3\nabla\bm b^{k_n}}\right)\abs{g(x')}\dif x_3\dif x'\\
    &\leq \omega_\phi\left(\frac{\e_n\norm{\nabla\bm b^{k_n}}_{L^\infty}}{2}\right)\norm{g}_{L^1}
\end{align*}
and from~\eqref{eq:retardation-sequence-condition}, the right hand side approaches zero as $n\rightarrow\infty$. Therefore,
\begin{align*}
    \int_\Omega\langle\mathsf{R}\mu_n,\phi\rangle g(x')\dif x'=\int_\Omega\int_{-1/2}^{1/2}\phi(\nabla\bm u^{k_n})g(x')\dif x_3\dif x'+o(1)\to \int_\Omega\langle\nu_{x'},\phi\rangle g(x')\dif x',
\end{align*}
i.e., $\mathsf{R}\mu_n\xrightharpoonup{*}\nu$ in the sense of measures.
\end{proof}

\section{Proof of Proposition \ref{prop:relation-to-LDR} and Corollary \ref{cor:linkrelax}}\label{sec:relaxation}
The membrane energy $\mathcal{E}_0$ we identified in the earlier section is defined on a class of Young measures. We will now relate this to an energy defined on deformations by exhibiting $\mathcal{E}_0$ as a relaxation. Consider the functional
\begin{align*}
    \mathcal{E}_{\text{el}}(\nu)=\begin{cases}
        \int_\Omega W_0(x',\nabla\bm u)\dif x' &\text{ if }\exists\nu=\delta_{\nabla\bm u}\\
        +\infty &\text{otherwise}
    \end{cases}
\end{align*}
which is defined on elementary Young measures. We then have the following relaxation result:
\begin{proposition}
    \label{thm:relaxation}
    The (sequential) lower semicontinuous envelope of $\mathcal{E}_{\text{el}}$ is $\mathcal{E}_0$, i.e.,
    \begin{align*}
        \mathcal{E}_0[\nu]=\min\left\{\liminf_{k\rightarrow\infty}\mathcal{E}_{\text{el}}[\nu^k]:\nu^k\xrightharpoonup{*}\nu\text{ in }L^\infty_w(\Omega,\mathcal{M}(\RR^{3\times2}))\right\}.
    \end{align*}
\end{proposition}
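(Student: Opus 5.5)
We prove the two inequalities that characterize a sequential lower semicontinuous envelope: a liminf inequality for arbitrary weak$*$ convergent sequences of elementary measures, and the existence of a sequence of elementary measures that attains $\mathcal{E}_0[\nu]$ in the limit.

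\emph{Liminf inequality.} Let $\nu^k\xrightharpoonup{*}\nu$ with $\liminf_k\mathcal{E}_{\text{el}}[\nu^k]<+\infty$. Pass to a subsequence realizing the liminf. Then $\nu^k=\delta_{\nabla\bm u^k}$ and
\begin{align*}
\sup_k\int_\Omega W_0(x',\nabla\bm u^k)\dif x'<+\infty.
\end{align*}
By the lower bound in Proposition~\ref{prop:W0-growth-condition}, $\sup_k\int_\Omega\Phi(\nabla\bm u^k)\dif x'<\infty$. Hence, after subtracting means and using Poincar\'e, $\bm u^k\weakarrow\bm u$ in $W^{1,p}$ along a subsequence, so $\nu\in\mathcal{GY}^p$. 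Theorem~\ref{thm:fundamental-property-of-Young-measures}, applied with the nonnegative Carath\'eodory integrand $\Phi$, gives $\int_\Omega\langle\nu_{x'},\Phi\rangle\dif x'<\infty$, so $\nu\in\mathcal{JY}^{p,-q}$. The same theorem applied to $W_0$ then gives
\begin{align*}
\liminf_k\mathcal{E}_{\text{el}}[\nu^k]\ge\int_\Omega\langle\nu_{x'},W_0\rangle\dif x'=\mathcal{E}_0[\nu].
\end{align*}
Here we need that $W_0$ is a nonnegative Carath\'eodory integrand with values in $[0,+\infty]$, which Proposition~\ref{prop:W0-growth-condition} provides. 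There is one subtlety: $W_0$ is $+\infty$ on rank-deficient matrices. If Theorem~\ref{thm:fundamental-property-of-Young-measures} is only available for finite integrands, we apply it to the truncations $W_0\wedge M$ and let $M\to\infty$ by monotone convergence.

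\emph{Attainment.} If $\nu\notin\mathcal{JY}^{p,-q}$, then $\mathcal{E}_0[\nu]=+\infty$ and any weak$*$ approximating sequence of elementary measures works. Such a sequence exists, e.g. from generating sequences by measurable maps; the liminf is infinite anyway by the previous step. If $\nu\in\mathcal{JY}^{p,-q}$, take the piecewise affine sequence $\{\bm u^k\}$ from Theorem~\ref{thm:gym}, or the smooth one from Proposition~\ref{prop:smooth-YM-generation}. It generates $\nu$, and $\{\Phi(\nabla\bm u^k)\}$ is equiintegrable. By the upper bound in Proposition~\ref{prop:W0-growth-condition}, $0\le W_0(x',\nabla\bm u^k)\le C(\Phi(\nabla\bm u^k)+1)$, so $\{W_0(x',\nabla\bm u^k)\}$ is also equiintegrable. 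In particular it is weakly precompact in $L^1$. The second part of Theorem~\ref{thm:fundamental-property-of-Young-measures} then gives
\begin{align*}
\int_\Omega W_0(x',\nabla\bm u^k)\dif x'\to\int_\Omega\langle\nu_{x'},W_0(x',\cdot)\rangle\dif x'=\mathcal{E}_0[\nu].
\end{align*}
Setting $\nu^k:=\delta_{\nabla\bm u^k}$, the liminf in the statement is attained, so the infimum is a minimum.

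The main obstacle is the attainment step, namely producing a generating sequence that stays in the rank-two region with equiintegrable singular energy. This is exactly what Theorem~\ref{thm:gym} supplies, so the proof reduces to checking that the generalized fundamental theorem applies to the extended-valued integrand $W_0$. We handle that by truncation in the liminf step; in the attainment step it is unnecessary, because along the sequence $W_0(x',\nabla\bm u^k)$ is finite a.e. and equiintegrable.
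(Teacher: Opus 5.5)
Your proof is correct and takes essentially the same route as the paper's. The lower bound uses the coercivity of $W_0$ from Proposition~\ref{prop:W0-growth-condition} for $W^{1,p}$-compactness and then Theorem~\ref{thm:fundamental-property-of-Young-measures}; attainment uses the equiintegrable generating sequence of Proposition~\ref{prop:smooth-YM-generation} (Theorem~\ref{thm:gym} works equally well) together with the upper bound $W_0\le C(\Phi+1)$.

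One small fix in your extra case $\nu\notin\mathcal{JY}^{p,-q}$: such a $\nu$ need not be a Young measure, so a sequence of Dirac masses converging to it may not exist. The constant sequence $\nu^k=\nu$, however, always exists and gives $\mathcal{E}_{\text{el}}[\nu]=+\infty=\mathcal{E}_0[\nu]$.
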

\begin{proof}
    Liminf: Suppose $\delta_{\nabla\bm u^k}\xrightharpoonup{*}\nu$ with $\liminf_{k\rightarrow\infty}\mathcal{E}_{\text{el}}[\delta_{\nabla\bm u^k}]<\infty$ (otherwise there is nothing to prove). Then, from~\eqref{eqn:W0-growth-condition}, the sequence $\{\bm u^k\}$ is weakly precompact and therefore a subsequence $\bm u^k\weakarrow\bm u$ in $W^{1,p}$ with $[\nu]=\nabla\bm u$. Since $\nu$ is generated by gradients, we have $\nu\in\mathcal{GY}^p$ and from Theorem~\ref{thm:fundamental-property-of-Young-measures} and the growth conditions on $W_0$:
    \begin{align*}
        \int_\Omega\innerpdt{\nu_{x'},W_0(x',\cdot)}\dif x'\leq\liminf_k\int_\Omega W_0(x',\nabla\bm u^k)\dif x',
    \end{align*}
    i.e., $\nu\in\mathcal{JY}^{p,-q}$ and $\mathcal{E}_0[\nu]\leq\liminf_k\mathcal{E}_{\text{el}}[\nu^k]$.

    Limsup: Now suppose $\nu\in\mathcal{JY}^{p,-q}$. From Proposition~\ref{prop:smooth-YM-generation}, there exists a sequence of smooth functions $\{\bm u^k\}\subset C^\infty(\Omega,\RR^3)$ generating $\nu$ such that $\{\Phi(\nabla\bm u^k)\}$ is equiintegrable. Using Proposition~\ref{prop:W0-growth-condition}, $\{W_0(\cdot,\nabla\bm u^k)\}$ is equiintegrable as well, thus
    \begin{align*}
        \lim_{k\rightarrow\infty}\mathcal{E}_{\text{el}}[\delta_{\nabla\bm u^k}]=\lim_{k\rightarrow\infty}\int_{\Omega}W_0(x',\nabla\bm u^k)\dif x'=\int_\Omega\innerpdt{\nu_{x'},W_0}\dif x'=\mathcal{E}_0[\nu].
    \end{align*}
\end{proof}

We proceed to the proof of Proposition \ref{prop:relation-to-LDR}.

\begin{proof}[Proof of Proposition \ref{prop:relation-to-LDR}]
    Since $QW_0$ is the quasiconvexification, $W_0\geq QW_0$. From~\cite[Proposition A.1(iii), A.3 and A.5]{hafsa2006nonlinear}, $QW_0$ is continuous and $QW_0(\cdot,\bar{\bm{F}})\leq C_1(1+\abs{\bar{\bm{F}}}^p)$ for some constant $C_1$. Let $\nu\in\mathcal{JY}^{p,-q}$ with $[\nu]=\nabla\bm u$. The lower bound then follows from applying the Jensen inequality for gradient Young measures:
    \begin{align*}
        \mathcal{E}_0[\nu]\geq\int_\Omega \innerpdt{\nu,QW_0(x',\cdot)}\dif x'\geq\int_\Omega QW_0(x',\nabla\bm u)\dif x'= E_{\text{LDR}}[\bm u].
    \end{align*}

    For the upper bound, we use the $\Gamma$-convergence result of~\cite{anza_nonlinear_2008} which is formulated in terms of the averaging operator
    \begin{align*}
        \pi_\varepsilon(\bm u)=\frac{1}{\varepsilon}\int_{-\varepsilon/2}^{\varepsilon/2}\bm u(x',x_3)\dif x_3.
    \end{align*}
    Let $\{\bm u_\varepsilon\}$ be a recovery sequence for $\bm u$ given by~\cite[Theorem 2.5]{anza_nonlinear_2008} i.e., $\pi_\varepsilon(\bm u_\varepsilon)\rightarrow\bm u$ in $L^p$, such that $\lim_{\varepsilon\rightarrow0}E^{3D}_\varepsilon[\bm u_\varepsilon]= E_{\text{LDR}}[\bm u]$ (recalling $E^{3D}_\varepsilon$ is the unrescaled bulk energy).
    
    The growth conditions imply $\det\nabla\bm u_\varepsilon>0$. Then, rescaling the domain to unit thickness, we have a sequence $\{\bm f_\varepsilon\}\subset W^{1,p}(\Omega_1,\RR^3)$ with $\bm f_\varepsilon\in \mathcal{A}_{\varepsilon}$. Then taking $\mu_\varepsilon=\delta_{\nabla_\e\bm f_\varepsilon}$ and applying the compactness result (Proposition~\ref{thm:Young-measure-compactness}), we have $\mathsf{R}\mu_{\varepsilon}\xrightharpoonup{*}\nu\in\mathcal{JY}^{p,-q}$ with $[\nu]=\nabla\bm f$ and $\bm f_\varepsilon\weakarrow\bm f$ weakly in $W^{1,p}(\Omega_1,\RR^3)$ with $\partial_3\bm f_\varepsilon\rightarrow0$ strongly in $L^p$ ($\bm f$ does not depend on $x_3$). We can identify the limits $\bm f$ and $\bm u$ since $\pi_1(\bm f_\varepsilon)=\pi_\varepsilon(\bm u_\varepsilon)\rightarrow\bm u$. From the liminf inequality of Theorem~\ref{thm:gc},
    \begin{align*}
        \mathcal{E}_0[\nu]\leq\liminf_{\varepsilon\rightarrow0}\mathcal{E}_\varepsilon[\mu_\varepsilon]=\lim_{\varepsilon\rightarrow0}E_\varepsilon[\bm f_\varepsilon]=  E_{\text{LDR}}[\bm u].
    \end{align*}

    Since $\nu\in\mathcal{JY}^{p,-q}$ with $[\nu]=\nabla\bm u$ is admissible, and $\mathcal{E}_0[\lambda]\geq  E_{\text{LDR}}[\bm u]$ for any admissible $\lambda$, we conclude that $\nu$ attains the infimum thus $\mathcal{M}_{\bm u}\neq\emptyset$.
\end{proof}
As a consequence of the above proposition, we can characterize the limiting behavior of sequences of 3D deformations that are asymptotically optimal:

\begin{proof}[Proof of Corollary \ref{cor:linkrelax}]
    Since $\mathcal{E}_0$ is finite on $\mathcal{JY}^{p,-q}$, $ E_{\text{LDR}}[\bm u]<+\infty$ and hence $\sup_\varepsilon \mathcal{E}_\varepsilon[\mu_\varepsilon]<+\infty$ for $\mu_{\varepsilon}:=\delta_{\nabla_\varepsilon\bm f_\varepsilon}$. Precompactness then follows from Proposition~\ref{thm:Young-measure-compactness}.

    Now suppose $\nu$ is a limit point $\{\mathsf{R}\mu_{\varepsilon_j}\}$ along $\varepsilon_j\rightarrow0$. By Proposition~\ref{thm:Young-measure-compactness}(i), $\nu\in\mathcal{JY}^{p,-q}(\Omega,\RR^{3\times2})$, and by Proposition~\ref{thm:Young-measure-compactness}(ii), $\innerpdt{\nu,\id}=\bar{\nabla}\bm u$, hence $\nu\in\mathcal{A}_{\bm u}$. From the liminf inequality of Theorem~\ref{thm:gc},
    \begin{align*}
        \mathcal{E}_0[\nu]\leq\liminf_{j\rightarrow\infty}\mathcal{E}_{\varepsilon_j}[\mu_{\varepsilon_j}]=\liminf_{j\rightarrow\infty}E_{\varepsilon_j}[\bm f_{\varepsilon_j}]\leq E_{\text{LDR}}[\bm u].
    \end{align*}
    On the other hand, $\nu\in\mathcal{A}_{\bm u}$ and Proposition~\ref{prop:relation-to-LDR} gives $\mathcal{E}_0[\nu]\geq E_{\text{LDR}}[\bm u]$. Hence $\mathcal{E}_0[\nu]= E_{\text{LDR}}[\bm u]=\min_{\mathcal{A}_{\bm u}}\mathcal{E}_0$, i.e., $\nu\in\mathcal{M}_{\bm u}$. Also, it follows that $\liminf_{j\rightarrow\infty}E_{\varepsilon_j}[\bm f_{\varepsilon_j}]= E_{\text{LDR}}[\bm u]$. Using this in conjunction with the hypothesis $\limsup_{\varepsilon\rightarrow0}E_\varepsilon[\bm f_\varepsilon]\leq E_{\text{LDR}}[\bm u]$, we conclude that $\lim_{\varepsilon\rightarrow0}E_\varepsilon[\bm f_\varepsilon]= E_{\text{LDR}}[\bm u]$.
\end{proof}

\section*{Acknowledgements}
The authors would like to thank Ian Tobasco for pointing out Brehm’s theorem as a possible tool for constructing the replacement maps and Barbora Bene\v{s}ov\'a for suggesting the Gromov–Eliashberg theorem.

Parts of this work appeared in Chapter~5 of GGN's PhD thesis~\cite{nair2024convexity}. GGN thanks Tim Healey for the invitation to join him on his sabbatical in Pisa, where this project began, and for financial support through NSF grant DMS-2006586. GGN also thanks Roberto Paroni for his hospitality during his visit to the University of Pisa.

MPS has been supported by the European Union-Next Generation EU through the PRIN2022 project ``NutShell''.
MPS	also acknowledges the Italian National Group of Mathematical Physics
INdAM-GNFM.

DPG was supported by the Zuckerman STEM leadership program, and by the FWF ESPRIT fellowship ESP 3828725.

\section*{AI disclosure}
The contents of this paper, including the results, their proofs and ideas underlying them were developed by the authors. During the preparation of this manuscript the authors used OpenAI's GPT-5.6 Sol, GPT-6 Astra, and Anthropic's Claude Opus 5 for language editing, stylistic improvements and figure generation. Opus 5 was also used for simplifying and checking some of the computations. The authors reviewed and edited all AI-assisted output and take full responsibility for the content of the manuscript.

\bibliographystyle{alpha}
\bibliography{ref}
\end{document}